\documentclass{article}
\usepackage{fullpage}
\usepackage{amsmath}
\usepackage{amsfonts}
\usepackage{amssymb}
\usepackage{epsf}
\usepackage{graphics}
\usepackage{float}
 \usepackage{graphicx}
 \usepackage{color}

\usepackage{mathrsfs} 
\usepackage{breakcites}
\usepackage{breqn}

\usepackage{fullpage}
\usepackage{amsmath}
\usepackage{amsfonts}
\usepackage{amssymb}
\usepackage{epsf}
\usepackage{graphics}
\usepackage{float}
\usepackage{graphicx}
\usepackage{color}
\usepackage{ulem}
\usepackage{breqn}


\restylefloat{figure}
\parindent 12pt

\usepackage{amsthm}
\usepackage{rotating}
\usepackage{subfigure}   
\usepackage{verbatim}
\usepackage[printonlyused]{acronym} 
\usepackage{setspace}    
\doublespacing           

\definecolor{DarkGreen}{rgb}{0.1,0.6,0.0}

\newcommand{\vect}[1]{{\bf {#1} }}

\usepackage{multimedia}

\theoremstyle{plain}
\newtheorem{theorem}{Theorem}
\newtheorem{proposition}[theorem]{Proposition}

\newcommand{\ba}{\begin{array}}
\newcommand{\ea}{\end{array}}
\newcommand{\beq}{\begin{equation}}
\newcommand{\eeq}{\end{equation}}
\newcommand{\bea}{\begin{eqnarray}}
\newcommand{\eea}{\end{eqnarray}}
\newcommand{\bean}{\begin{eqnarray*}}
\newcommand{\eean}{\end{eqnarray*}}
\newcommand{\bal}{\begin{align}}
\newcommand{\eal}{\end{align}}
\newcommand{\bit}{\begin{itemize}}
\newcommand{\eit}{\end{itemize}}
\newcommand{\benum}{\begin{enumerate}}
\newcommand{\eenum}{\end{enumerate}}
\newcommand{\bdm}{\begin{displaymath}}
\newcommand{\edm}{\end{displaymath}}
\newcommand{\no}{\nonumber}

\newcommand{\bssz}{\begin{scriptsize}}
\newcommand{\essz}{\end{scriptsize}}
\newcommand{\bfnsz}{\begin{footnotesize}}
\newcommand{\efnsz}{\end{footnotesize}}


\newcommand{\bv}{\vect{\overline{v}}}

\newcommand{\half}{\frac{1}{2}}

%

\begin{document}

\bibliographystyle{apalike}    

\title{A Central-Upwind Scheme for Two-layer Shallow-water Flows with Friction and Entrainment along Channels}


\author{Gerardo Hernandez-Duenas\footnote{Gerardo Hernandez-Duenas, Institute of Mathematics, National University of Mexico, Blvd. Juriquilla 3001, Queretaro, Mexico, hernandez@im.unam.mx} \footnote{Corresponding Author} ,
Jorge Balb\'as \footnote{ Jorge Balb\'as, Department of Mathematics. California State University, Northridge. 18111 Nordhoff St. Northridge, CA 91330-8313, jorge.balbas@csun.edu}
}
 
\maketitle

\thanks{Research supported in part by grants UNAM-DGAPA-PAPIIT  IN113019 \& Conacyt A1-S-17634.}\\

\thispagestyle{empty}

\begin{abstract}
We present a new high-resolution, non-oscillatory semi-discrete central-upwind scheme for one-dimensional two-layer shallow-water flows with friction and entrainment along channels with arbitrary cross sections and bottom topography. These flows are described by a conditionally hyperbolic balance law  with non-conservative products. A detailed description of the properties of the model is provided, including entropy inequalities and asymptotic approximations of the eigenvalues of the corresponding coefficient matrix. The scheme extends existing central-upwind semi-discrete numerical methods for hyperbolic conservation and balance laws and it satisfies two properties crucial for the accurate simulation of shallow-water flows: it {\it preserves the positivity} of the water depth for each layer, and it is {\it well balanced}, {\it i.e.}, the source terms arising from the geometry of the channel are discretized so as to balance the non-linear hyperbolic flux gradients. Along with the description of the scheme and proofs of these two properties, we present several numerical experiments that demonstrate the robustness of the numerical algorithm.
\end{abstract}

\noindent
{\bf Keywords:}\\
Hyperbolic systems of conservation and balance laws ; \; semi-discrete schemes ; \; Saint-Venant system of shallow-water equations with friction ; \; non-oscillatory reconstructions ; \; channels with irregular geometry

\clearpage
\section*{Introduction}\label{sec:intro}

In this paper we present a new high-order numerical scheme for simulating two-layer shallow-water flows along channels with a bottom topography and varying width (see Figure \ref{fig:channels}). These flows are characterized by a large horizontal length scale relative to their depth and are commonly observed in nature --{\it e.g.}, channel flows, straits, mountain passes; and their modeling and simulation have applications in flood control, coastal engineering, or environmental assessment. Applications of this model include the study of internal waves observed in rivers and possibly caused by shear flow instabilities \cite{apel1975observations}. Shallow-water flows are typically modeled by the Saint-Venant equations, a hyperbolic balance law that results from the {\it depth averaging} of the Euler equations. The model poses various mathematical and computational challenges: the effects of the channel geometry on the flow are described by source terms that need to be discretized consistently with the existence of equilibrium solutions and the onset and propagation of discontinuities --{\it e.g.}, hydraulic jumps. The dynamics of the interface between the two layers render non-conservative products of the flow variables and their space derivatives whose correct discretization is crucial for determining the location and propagation of these discontinuities.

\begin{figure}[h!]
\centering
\hspace{-0.0in} \includegraphics[scale=0.49]{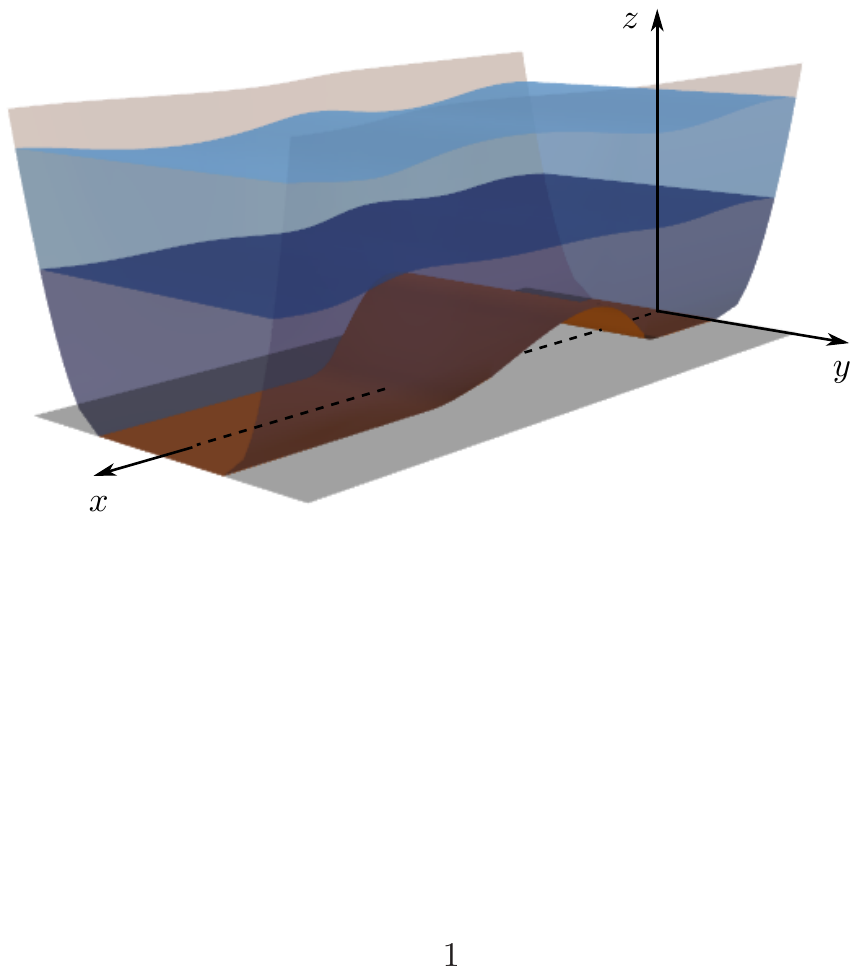} \hspace{0.82in} \includegraphics[scale=0.49]{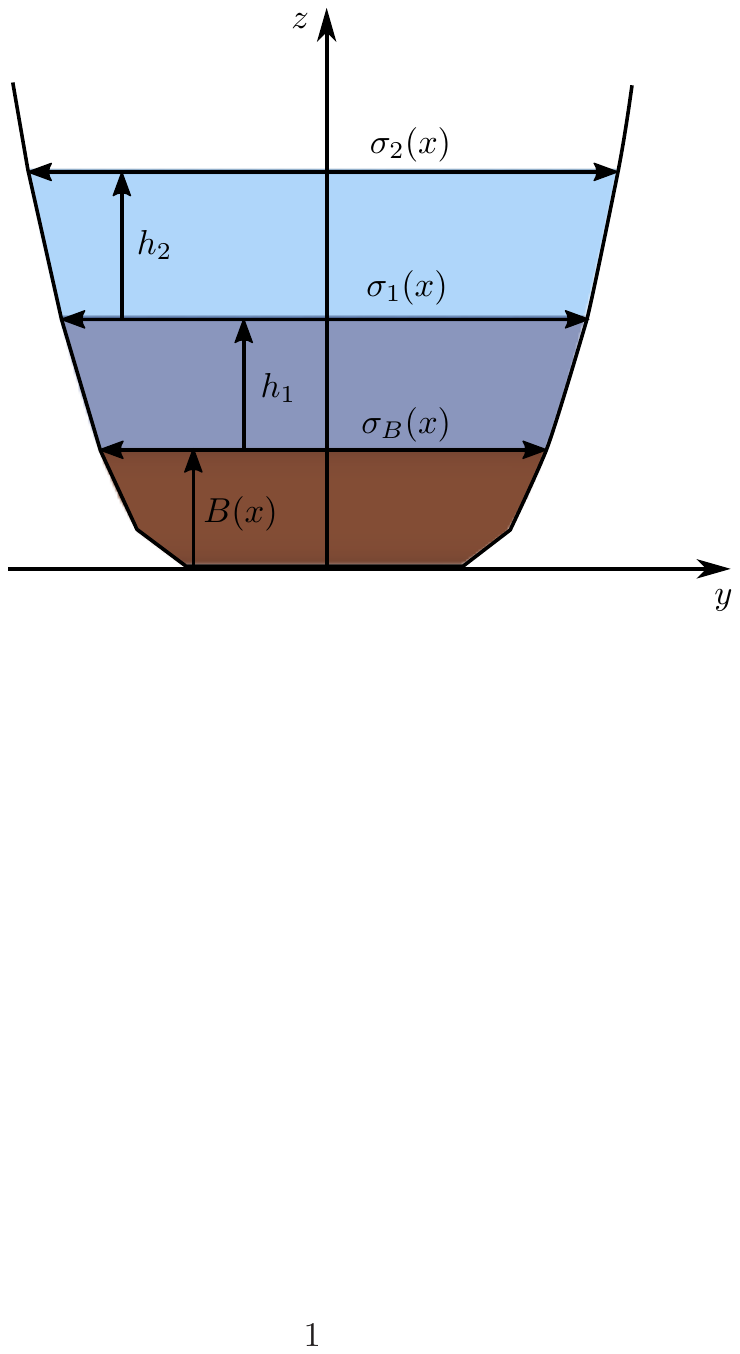} \\ \hspace{0.6in} (a) \hspace{3.2in} (b) \ \ \ \ \ \ \ \ \ \\[0.15in] 
\hspace{0.15in} \includegraphics[scale=0.49]{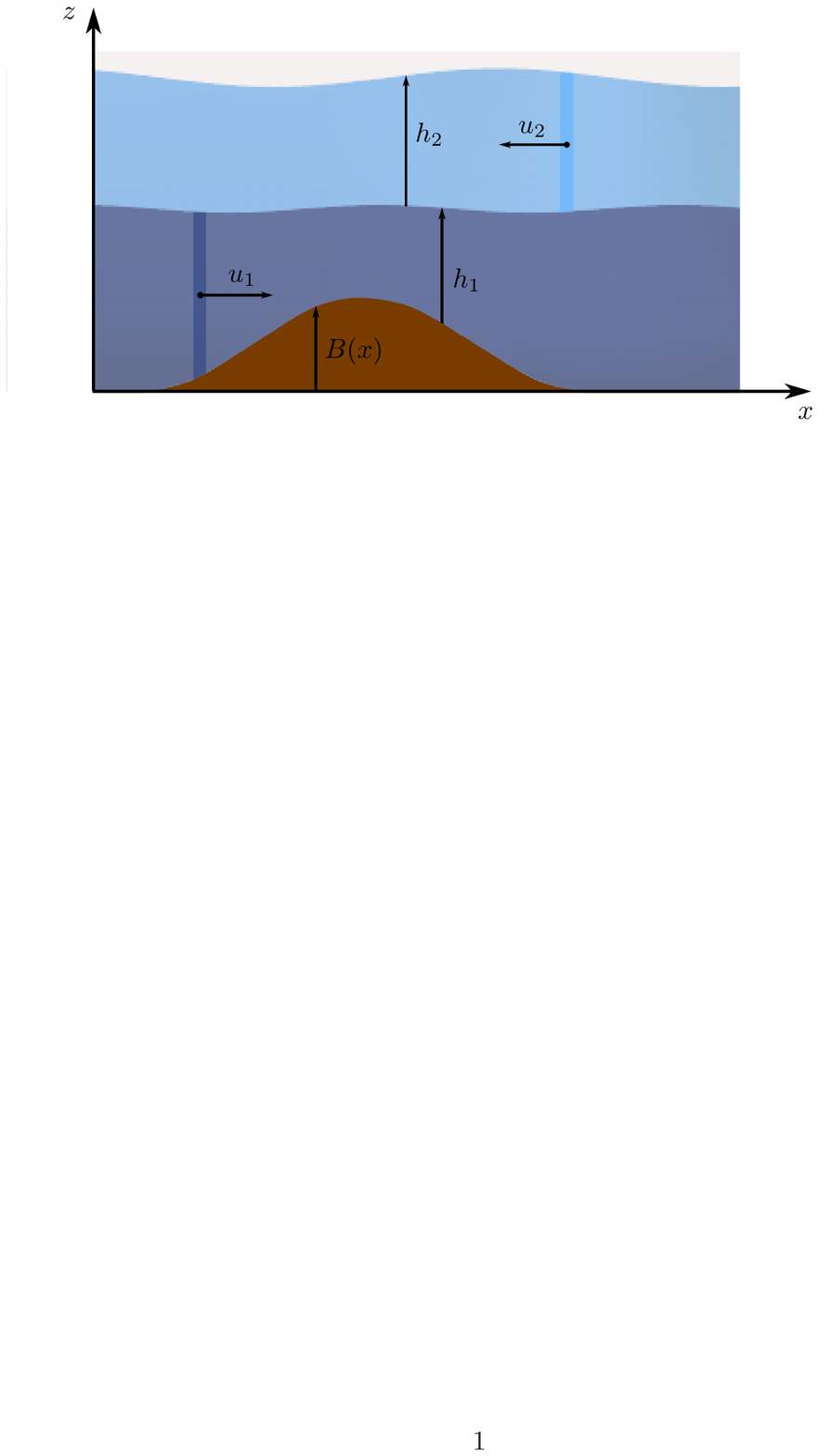} \hspace{0.5in} \includegraphics[scale=0.49]{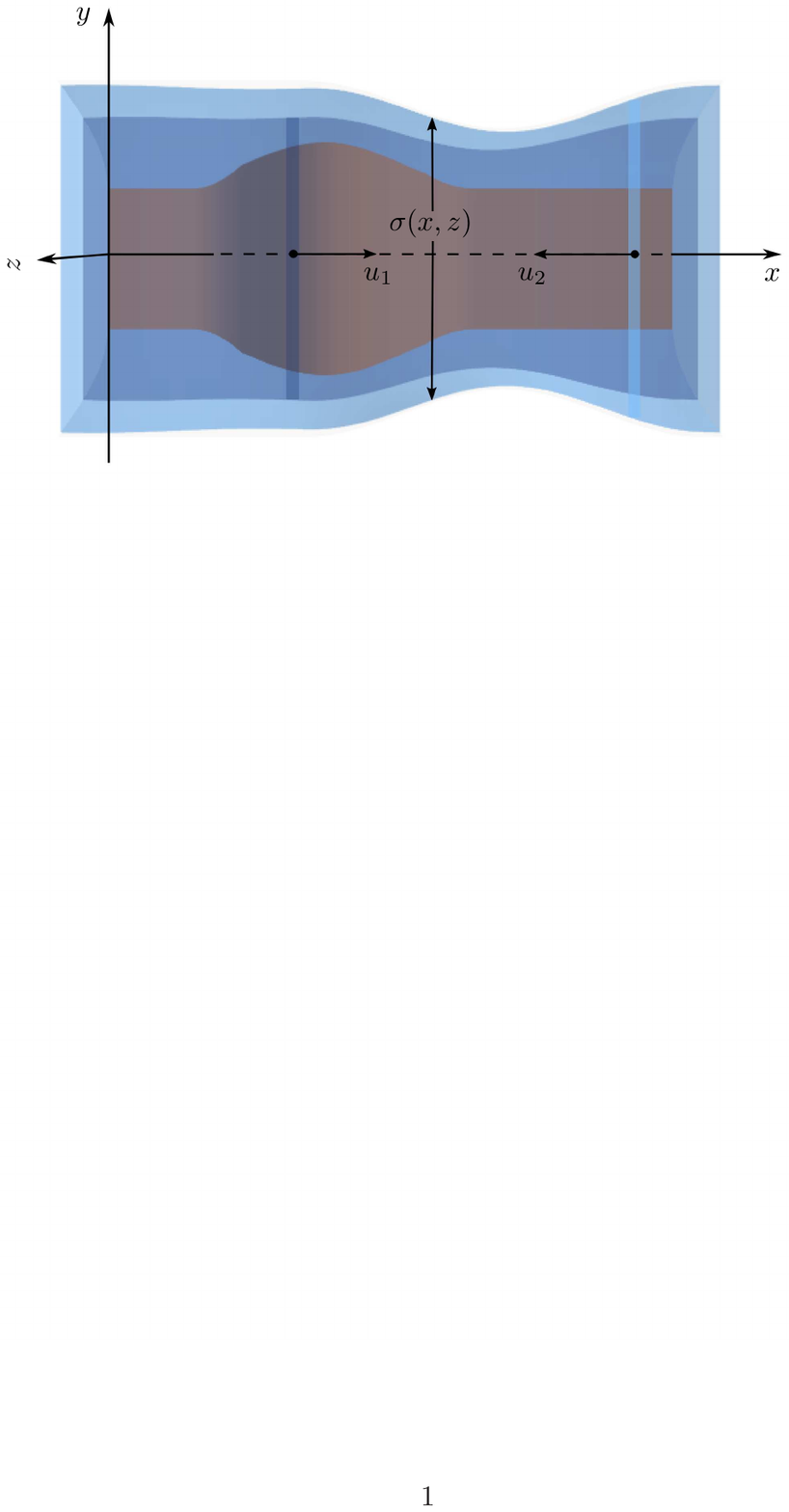} \\ \hspace{0.85in} (c) \hspace{3.15in} (d) \ \ \ \ \ \ \ \ \ \ \ \
\caption{\label{fig:channels} Schematic of a channel flow with the two-layers moving in opposite directions: (a) Full 3D view of the flow, (b) channel cross section, (c) profile view, and (d) overview of the flow.} 
\end{figure}

The various challenges that the Saint-Venant equations pose for simulating shallow-water flows are well known and have been studied extensively. The non-conservative terms may lead to the loss of hyperbolicity, and the common occurrence of steady states in geophysical flows require {\it well-balance} numerical schemes capable of capturing and resolving accurately steady-state solutions of the PDE model. These challenges become increasingly difficult to address as the flows and the corresponding PDE models that describe them increase in complexity, and a significant effort over the past couple of decades have lead to the development of numerical schemes for simulating a wide variety of flows. Numerous schemes have been proposed for the simplest case of one-layer flows along channels with constant width: a positivity preserving {\it kinetic scheme} capable of preserving the steady state of rest is presented in \cite{PerthameSimeoni2001}, and in \cite{Audusse2004} a finite volume scheme with similar properties is devised using {\it hydrostatic reconstruction} to capture steady-state solutions. The authors of \cite{khan2014modeling} proposed a {\it discontinuous Galerkin} method. And \cite{KurganovLevy2002, kurganov2007second, bollermann2011well} introduce positivity preserving well-balance {\it central-upwind} schemes for these flows. The approach suggested to achieve well-balance and positivity within the central-upwind framework is extended in \cite{BalbasKarni2009} to simulate flows along channels with {\it varying cross-sections} using a {\it central} scheme, a type of flows also addressed in \cite{Vasquez00} using an upwind Roe-type scheme. And the authors of this paper proposed a new central-upwind scheme for one-layer flows along channels with arbitrary geometry in \cite{balbas2014positivity}. Several schemes have also been devised for simulating two-layer flows. For flows along straight channels, the authors of \cite{AbgrallKarni2009} propose to address the conditional hyperbolicity of the model with a {\it relaxation} approach that simplifies greatly its eigen structure. A similar approach is presented in \cite{CHIAPOLINO20181043} where a strictly hyperbolic formulation with pressure relaxation is obtained by considering weak compressibility of the phases of the flow. And in \cite{castro2011numerical} friction between the layers is added to prevent the loss of hyperbolicity. The central-upwind scheme in \cite{KurganovPetrova2009} provides a robust approach for flows along straight channels that is simple to implement, and the work in \cite{balbas2013non} extends the central framework to channels with rectangular sections of varying width. For flows along channels with arbitrary geometry, the authors of \cite{Castro2004} extended with great success (and high impact in the field) the $Q$-scheme for hyperbolic systems with source terms previously introduced in \cite{Castro2001}. Some of these schemes for shallow-water flows have been employed --and new ones created-- for simulating, studying or recreating geophysical flows from the real world like gravitational currents or strait flows. These typically pose additional challenges such as the handling of non-symmetrical channel geometries (described by bathymetry data) or accounting for phenomena like entrainment. For some examples, we refer the reader to the works presented in \cite{adduce2012gravity, apel1975observations, Castro2004, castro2007improved, kim2008two}.

In this paper we follow on our previous works on central schemes for one-layer flows along channels with arbitrary geometry, \cite{balbas2014positivity}, and two-layer flows along channels with varying rectangular cross-sections, \cite{balbas2013non}. We propose a new high-order central-upwind scheme to compute two-layer shallow-water flows along channels with arbitrary geometry that incorporates the treatment of friction and entrainment terms. These new terms allow us to simulate more realistic flows and to assess the limitations of the model and the scheme. In order to understand and address the challenges posed by the model and its limitations, we present a detailed analysis of the hyperbolic PDE model, with special emphasis on the conditions that lead to the loss of hyperbolicity. To this end, we derive rigorous asymptotic approximations of the eigenvalues of the quasilinear form of the model and, for the sake of completeness in the analysis, we prove the existence of an entropy function and an entropy inequality that physically relevant weak solutions must satisfy.

In order to address the various numerical and computational challenges we propose a central-upwind scheme based on the semi-discrete central-upwind schemes for hyperbolic conservation laws of Kurganov, Noelle, and Petrova, \cite{kurganov2001semidiscrete}, characterized by their simple implementation and robustness. The proposed numerical scheme evolves the cell averages of the flow variables with second order accuracy, and their implementation requires four main ingredients: a {\it non-oscillatory reconstruction} of point values from cell averages that preserves the {\it positivity} of the water depth, an {\it evolution routine} to advance the solution in time, {\it estimates} of the largest and smallest eigenvalues of the system, and the {\it discretization of source terms and non-conservative products} that balance the hyperbolic fluxes so as to recognize steady states at rest and add accuracy to the computation of flows near non stationary steady states.

The paper is structured as follows. In \S \ref{sec:model} we present the model, its properties, and the challenges that these properties pose for computing numerical solutions. In \S \ref{sec:scheme} we describe the proposed numerical scheme and prove that it preserves the positivity of the water height, and it is well-balanced, {\it i.e.}, it recognizes and preserves the steady state of rest. Numerical solutions for a variety of flow regimes are presented in \S \ref{sec:Results}, validating the scheme's accuracy and robustness and demonstrating its ability to simulate a wide range of flows.
\section{The Model and its Properties} \label{sec:model}
 
The model for the two-layer shallow-water flows in channels is taken from \cite{Castro2004}. After adding source terms due to friction and entrainment, it has been re-written as
\begin{subequations} \label{eq:sw}
\begin{align}
\frac{\partial A_1}{\partial t}  & + \frac{\partial Q_1}{\partial x}   = S_e,   \label{eq:sw-1} \\[0.1in]
\frac{\partial Q_1}{\partial t}  & + \frac{\partial}{\partial x} \left(A_1u_1^2 + p_1 \right) = g \left(I_{1} - h_1 \sigma_B B'  +r h_2 \frac{\partial A_1}{\partial x} \right)+ S_{f,1} + S_e u_1, \label{eq:sw-2}  \\[0.1in]
\frac{\partial A_2}{\partial t}  & + \frac{\partial Q_2}{\partial x}   =  -r S_e,  \label{eq:sw-3} \\[0.1in]
\frac{\partial Q_2}{\partial t}  & + \frac{\partial}{\partial x} \left(A_2u_2^2+p_2 \right)  = g \left(I_{2} - h_2 \sigma_1 \frac{\partial w_1} {\partial x} \right)+ S_{f,2} -r S_e u_2.  \label{eq:sw-4} 
\end{align}
\end{subequations}

Here $h_1$, and $h_2$ denote the depth of bottom and top layers respectively, $u_1$, and $u_2$ the cross-sectional velocities; $g$ the acceleration of gravity, $B(x)$ describes the bottom topography, and $\sigma(x,z)$ the width of the channel; $A_1=  \displaystyle \int_{B}^{w_1}\sigma(x,z)\, dz$, and $A_2 = \displaystyle \int_{w_1}^{w_2}\sigma(x,z)\, dz$ are the cross-sectional {\it wet} areas in each layer; $w_1 = B + h_1$ denotes the total elevation of the internal layer and $w_2 = B+h_1+h_2$ that of the external layer; and $Q_1 = A_1 u_1$, and $Q_2 = A_2 u_2$ are the flow rates or discharges for the internal and external layers respectively. Furthermore, $\sigma_B(x) = \sigma(x,B(x)), \sigma_1(x,t)=\sigma(x,w_1(x,t)), \sigma_2(x,t) = \sigma(x,w_2(x,t))$ denote the channel's width at the bottom topography, and at the internal and external layers respectively. We note that $\sigma_1$ and $\sigma_2$ depend both on space and time since they are evaluated at the internal and external layers. The ratio of densities is denoted by $r=\rho_2/\rho_1\le 1$.  The vertically integrated hydrostatic pressure of the upper layer is given by
\begin{equation} \label{eq:p1}
p_2 =  g \int_{w_1}^{w_2} (w_2 -z) \, \sigma(x,z) \, dz,
\end{equation}
and treats the internal layer as a moving topography. The hydrostatic vertically integrated pressure in the internal layer has to account for the contribution of the pressure exerted by the upper layer on it, and it is given by
\begin{equation} \label{eq:p2}
p_1 =  g \int_{B}^{w_1} (w_1 + r h_2 - z) \, \sigma(x,z) \, dz.
\end{equation}
The source terms $I_{1}$, and $I_{2}$ correspond to the vertically integrated pressure terms due to width variation, and are given by
\beq \label{eq:IntegralTerms}
I_1 = I_1(x,t) =  \int_{B}^{w_1} (w_1 - z) \, \sigma_x (x,z) \, dz,  \ \ \ \ \ \ \text{and} \ \ \ \ \ \ I_2 = I_2(x,t)  =  \int_{w_1}^{w_2}(w_2 - z) \, \sigma_x (x,z) \, dz.
\eeq
For piecewise trapezoidal channels, the integrals involved in the computation of the cross sectional areas coincide with the corresponding approximations given by the trapezoidal rule. 

The friction terms are calculated as
\begin{equation}
\label{eq:Friction}
\begin{array}{lcl}
S_{f,1} & = & - rg\frac{n_i^2 \left| \frac{Q_1 A_1+Q_2 A_2}{A_1+A_2} \right| }{R^{4/3}} (u_1-u_2) -  g\frac{n_b^2  \left| \frac{Q_1 A_1+Q_2 A_2}{A_1+A_2} \right|}{R^{4/3}} u_1 , \\[0.2in]
S_{f,2} & = & - g\frac{n_i^2  \left| \frac{Q_1 A_1+Q_2 A_2}{A_1+A_2} \right|}{R^{4/3}} (u_2-u_1),
\end{array}
\end{equation}
for the internal and external layers respectively. Here $n_i$ and $n_b$ are the Manning roughness coefficients for the interface and bottom respectively. The hydraulic radius, $R$, is defined as the ratio between the wet area and the wetted perimeter, and it is given by
\begin{equation}
\label{eq:HydraulicRadius}
R = \frac{A_1+A_2}{\sigma_B+\int_B^{w_2} \sqrt{4+(\partial_z \sigma(x,z))^2} dz}.
\end{equation}
This friction terms in equation \eqref{eq:Friction} have been adapted for the two-layer case from the formula used in \cite{khan2014modeling} for one-layer non-rectangular channels (Ch. V, p. 83). See \cite{kim2008two,castro2011numerical} for other approaches. We have adapted to our model the expression of the source term due to entrainment for flows along channels with constant width found in \cite{adduce2012gravity}. It is given by
\[
S_e = \frac{A_1}{\sigma_1} V_e,
\]
where $V_e$ is the entrainment velocity. Furthermore, we set $S_e$ proportional to $A_1$ to suppress entrainment in the absence of the heavier fluid in the internal layer. The cross sectional area in the external layer ($A_2$) is not small in the numerical tests in Section \ref{sec:CurrentsEntrainment} where entrainment is included.

Equations \eqref{eq:sw-1} and \eqref{eq:sw-3} express conservation of mass. The momentum equation \eqref{eq:sw-4} treats the elevation of the internal layer $w_1$ as a moving topography, resulting in a balance law with non-conservative products. The momentum equation \eqref{eq:sw-2} of the internal layer has a momentum exchange of the external layer on it, represented by non-conservative products. We note that in the limit $h_2 \rightarrow 0$ we recover the one layer shallow-water system. On the other hand, when $\sigma = \sigma(x)$ is independent of height (straight vertical walls), the pressure terms in \eqref{eq:p1} and \eqref{eq:p2} become
\begin{displaymath}
p_1 = g \frac{\sigma h_1^2}{2} + r g \sigma h_1 h_2 \hspace{0.5in} \text{and} \hspace{0.5in} p_2 = g \frac{\sigma h_2^2}{2}, 
\end{displaymath}
and the source terms in \eqref{eq:IntegralTerms} become
\begin{displaymath}
I_1 = \frac{h_1^2}{2} \frac{d\sigma}{dx} \hspace{0.7in} \text{and} \hspace{0.7in} I_2 = \frac{h_2^2}{2} \frac{d\sigma}{dx},
\end{displaymath}
resulting in the model presented in \cite{balbas2013non} in the absence of friction and entrainment.

Source terms in hyperbolic balance laws that do not depend on the derivatives of the solution variables do not modify the Rankine-Hugoniot jump conditions of weak solutions. However, the source terms in system \eqref{eq:sw} include $rg h_2 \partial_x A_1$ and $g h_2 \sigma_1 \partial_x w_1$ with non-conservative products that depend on derivatives of the wet areas. The jump conditions are modified by such terms leading to theoretical and numerical challenges \cite{AbgrallSmadar2010}. A theory of non-conservative products can give us a notion of weak solutions in those cases \cite{DalMasoetal1995}. Following the ideas in \cite{kurganov2009central} implemented for 2D and 1D two-layer shallow-water flows (with no width variation), we rewrite system \eqref{eq:sw} in a more convenient way to treat the non-conservative products. To that end, we decompose the hydrostatic pressures as
\[
p_1 = g \widehat w_2 A_1- g\int_B^{w_1} z \, \sigma(x,z) \, dz, \ \ \ \ \text{and} \ \ \ \ p_2 = g w_2 A_2 - g\int_{w_1}^{w_2} z \, \sigma(x,z) \, dz, 
\]
where $\widehat w_2 = B+h_1+r h_2$. Rewriting system \eqref{eq:sw} in terms of these, we obtain the alternative model
\begin{subequations} \label{eq:swW2Hat}
\begin{align}
\frac{\partial A_1}{\partial t} & + \frac{\partial Q_1}{\partial x} = S_e,  \\[0.1in]
\frac{\partial Q_1}{\partial t} & + \frac{\partial}{\partial x} \left(\frac{Q_1^2}{A_1} + g \widehat w_2 A_1 \right) = g \widehat w_2 \frac{\partial A_1}{\partial x}+ S_{f,1} +S_e u_1,  \\[0.1in]
\frac{\partial A_2}{\partial t}  & + \frac{\partial Q_2}{\partial x} = -r S_e,  \\[0.1in]
\frac{\partial Q_2}{\partial t}  & + \frac{\partial}{\partial x} \left(\frac{Q_2^2}{A_2}+ g w_2 A_2 \right) =  g w_2 \frac{\partial A_2}{\partial x}+ S_{f,2}-r S_e u_2.
\end{align}
\end{subequations}

We note that system \eqref{eq:swW2Hat} still has non-conservative products and it is equivalent to system \eqref{eq:sw} only when the associated solution is smooth. Therefore, both systems \eqref{eq:sw} and \eqref{eq:swW2Hat} present similar theoretical challenges. However, the non-conservative products in system \eqref{eq:swW2Hat} have $w_2$ and $\widehat w_2$ as factors, which are approximately constant in many situations where the rigid-lid approximation is valid, such as the case of internal waves. In the ideal case where such factors are constant, however, those non-conservative products become conservative, and system \eqref{eq:swW2Hat} therefore minimizes the numerical challenges when simulating two-layer flows. If the spatial derivatives of $A_1$ and $A_2$ in the non-conservative products are treated as flux gradients, then any consistent second order reconstruction of $w_2$ and $\widehat w_2$ leads to robust numerical schemes. Details are given in Section \ref{sec:scheme}. Other efforts to add stability to the two-layer shallow-water equations includes the three-layer approximation, as described in \cite{chertock2013three}.

\subsection{The Quasilinear Form} \label{eq:properties}

For one-layer shallow-water systems, one can find explicit expressions involving a speed of sound quantity. For two-layer flows, however, the eigenvalues have no explicit formulas but can be approximated by explicit expressions under suitable conditions. Nonetheless, the definition of speed of sound for each layer will still be helpful. For the external layer, this quantity is analogous to the one layer case, and it is given by
\begin{equation}
\label{eq:c_2}
c_2 = \sqrt{\frac{g A_2}{\sigma_2}},
\end{equation}
which, in practice, makes the internal layer a moving topography. The speed of sound for the internal layer depends on the width of both layers and it is given by
\begin{equation}
\label{eq:c_1}
c_1 = \sqrt{g \left( \epsilon + r \frac{\sigma_1}{\sigma_2} \right) \frac{A_1}{\sigma_1}} = \sqrt{g\left( r \frac{ A_1}{\sigma_2} + \epsilon  \frac{ A_1}{\sigma_1}\right)},
\end{equation}
where $\epsilon=1-r$.

Similarly to channels with straight walls, the system can be written in quasi-linear form. The details are in the following proposition.

\clearpage

\begin{proposition}
Either system, \eqref{eq:sw} or \eqref{eq:swW2Hat} can be written in quasilinear form, ${\bf W}_t + M({\bf W}; \sigma, B) {\bf W}_x = S({\bf W}; \sigma, B)$ with
\begin{subequations} \label{eq:semilinear}
\begin{align}
 \ & {\bf W}(x,t) = \begin{pmatrix} A_1 \\ Q_1\\ A_2 \\ Q_2 \end{pmatrix}, & \hspace{-2.4in} M({\bf W}; \sigma, B) = \begin{pmatrix} 0 & 1 & 0 & 0 \\ c_1^2 - u_1^2 & 2 u_1 & r g\dfrac{A_1}{\sigma_2} & 0 \\ 0 & 0 & 0 & 1 \\ c_2^2 & 0 & c_2^2-u_2^2 & 2 u_2 \end{pmatrix}, \\
\hspace{-2in} \text{and} \hspace{0.87in} & \ & \ \ \ \ \ \no \\
\ & S({\bf W}; \sigma, B) = \begin{pmatrix} S_e \\ c_1^2 \left( I_3-\sigma_B \frac{dB}{dx} \right) + S_{f,1} + S_e u_1 \\ -rS_e \\ c_2^2 \left( I_4-\sigma_B \frac{dB}{dx} \right) + S_{f,2} - rS_eu_2\end{pmatrix}, & \ 
\end{align}
\end{subequations}
where
\beq
I_3 = \int_B^{w_1}\sigma_x(x,z) \, dz, \ \ \ \ \ \ \ \text{and} \ \ \ \ \ \ I_4= \int_B^{w_2}\sigma_x(x,z) \, dz.
\eeq

\end{proposition}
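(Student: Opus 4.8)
The plan is to obtain the quasilinear form \eqref{eq:semilinear} by differentiating the fluxes directly. I would start from system \eqref{eq:swW2Hat}, whose flux has the simplest algebraic structure; since \eqref{eq:sw} and \eqref{eq:swW2Hat} agree for smooth solutions and a quasilinear form is a statement about smooth ${\bf W}$, this simultaneously settles the claim for \eqref{eq:sw}. The two continuity equations are already quasilinear and are read off as the rows $(0,1,0,0)$ and $(0,0,0,1)$ of $M$, with the entries $S_e$ and $-rS_e$ of $S$, so all the content lies in the two momentum equations. Throughout, $u_i=Q_i/A_i$, $h_i$, $w_i$, $c_1$, $c_2$ are treated as functions of ${\bf W}$ and of $\sigma,B$, which is what makes $M=M({\bf W};\sigma,B)$ and $S=S({\bf W};\sigma,B)$.

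The first, routine, step is the kinetic part of each momentum flux: $\partial_x(Q_i^2/A_i)=2u_i\,\partial_x Q_i-u_i^2\,\partial_x A_i$, supplying the $2u_i$ and $-u_i^2$ entries of rows two and four. Next, $\partial_x(g\widehat w_2A_1)=g\widehat w_2\,\partial_x A_1+gA_1\,\partial_x\widehat w_2$ and $\partial_x(gw_2A_2)=gw_2\,\partial_x A_2+gA_2\,\partial_x w_2$; in each momentum equation of \eqref{eq:swW2Hat} the first term cancels exactly against the non-conservative product $g\widehat w_2\,\partial_x A_1$ (resp.\ $gw_2\,\partial_x A_2$) on the right-hand side, leaving only $gA_1\,\partial_x\widehat w_2$ and $gA_2\,\partial_x w_2$ to be split between $M{\bf W}_x$ and $S$. (If one instead starts from \eqref{eq:sw}, the same point is reached after one further Leibniz differentiation of the hydrostatic pressures $p_1,p_2$, during which the non-conservative product $grh_2\,\partial_x A_1$ in \eqref{eq:sw-2} is cancelled by matching terms generated by $\partial_x p_1$.)

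The heart of the argument is expressing $\partial_x\widehat w_2$ and $\partial_x w_2$ through $\partial_x A_1$, $\partial_x A_2$ and the geometry. Applying Leibniz's rule to $A_1=\int_B^{w_1}\sigma\,dz$ and $A_2=\int_{w_1}^{w_2}\sigma\,dz$ gives $\partial_x A_1=\sigma_1\,\partial_x w_1-\sigma_B B'+I_3$ and $\partial_x A_2=\sigma_2\,\partial_x w_2-\sigma_1\,\partial_x w_1+(I_4-I_3)$, which I would solve for $\sigma_1\,\partial_x w_1$ and $\sigma_2\,\partial_x w_2$. Writing $gA_2\,\partial_x w_2=(gA_2/\sigma_2)(\sigma_2\,\partial_x w_2)$ and recognising $c_2^2=gA_2/\sigma_2$ from \eqref{eq:c_2} produces the fourth row $(c_2^2,0,c_2^2-u_2^2,2u_2)$ of $M$ and the term $c_2^2(I_4-\sigma_B B')$ of the fourth component of $S$. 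For the internal layer one uses $\widehat w_2=w_1+rh_2=\epsilon w_1+rw_2$, so that $gA_1\,\partial_x\widehat w_2=g\tfrac{\epsilon A_1}{\sigma_1}(\sigma_1\,\partial_x w_1)+g\tfrac{rA_1}{\sigma_2}(\sigma_2\,\partial_x w_2)$; substituting the two Leibniz identities and invoking the definition \eqref{eq:c_1} of $c_1^2=g(\epsilon A_1/\sigma_1+rA_1/\sigma_2)$ gathers the coefficients of $\partial_x A_1$ and $\partial_x A_2$ into $c_1^2\,\partial_x A_1$ and $(rgA_1/\sigma_2)\,\partial_x A_2$ — which, together with the $-u_1^2\,\partial_x A_1$ from the kinetic part, form the second row of $M$ — and leaves a purely geometric remainder for the second component of $S$. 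The friction terms $S_{f,1},S_{f,2}$ of \eqref{eq:Friction} and the entrainment contributions $S_e$, $S_eu_1$, $-rS_eu_2$ carry no derivatives of ${\bf W}$ and thus pass straight into $S$.

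I expect the main obstacle to be bookkeeping rather than any genuine difficulty. One must keep precise track of every boundary term produced by the Leibniz differentiations — those of the area integrals $A_1,A_2$, and, in the \eqref{eq:sw} route, also those of the pressure integrals $p_1,p_2$, only some of which vanish at the endpoints $z=B$, $z=w_1$, $z=w_2$ — check that these recombine with the geometry integrals $I_3,I_4$ (and, in the \eqref{eq:sw} route, with $I_1,I_2$) so that no stray $\partial_x A_1$ term leaks into the source, and verify that the leftover geometry-dependent pieces reduce exactly to the entries $c_1^2(I_3-\sigma_B B')$ and $c_2^2(I_4-\sigma_B B')$ of $S$ recorded in \eqref{eq:semilinear}. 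Once these cancellations are confirmed the identification of $M$ and $S$ is immediate, and the smooth-solution equivalence of \eqref{eq:sw} and \eqref{eq:swW2Hat} completes the proof for both formulations.
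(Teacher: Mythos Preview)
Your approach is essentially the same as the paper's: both rely on the Leibniz differentiation rule applied to the area integrals $A_1=\int_B^{w_1}\sigma\,dz$ and $A_1+A_2=\int_B^{w_2}\sigma\,dz$ to express $\partial_x w_1$ and $\partial_x w_2$ in terms of $\partial_x A_1$, $\partial_x A_2$, $I_3$, $I_4$ and $\sigma_B B'$, and then substitute back into the momentum equations. The paper's proof simply records equation \eqref{eq:FTCcont} and the resulting identities for $\partial_x w_1,\partial_x w_2$, leaving the algebra implicit; you spell out the same computation in more detail, work from the formulation \eqref{eq:swW2Hat} (which is a minor convenience, not a different idea), and make explicit the decomposition $\widehat w_2=\epsilon w_1+rw_2$ that produces $c_1^2$ via \eqref{eq:c_1}.
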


\begin{proof}
The derivation of the quasilinear form follows from the fundamental theorem of calculus 
\beq
\label{eq:FTCcont}
\frac{\partial}{\partial x} \left( \int_{a(x)}^{b(x)} \sigma(x,z)dz\right) = \int_{a(x)}^{b(x)}\sigma_x(x,z)dz - \sigma(x,a(x))\frac{d}{dx}a(x) + \sigma(x,b(x))\frac{dB}{dx}(x), 
\eeq
and the relations
\begin{equation}
\frac{\partial w_1}{\partial x} = \frac{\frac{\partial A_1}{\partial x}-I_3+\sigma(x,B)\frac{dB}{dx}}{\sigma(x,w_1)}, \ \ \ \ \ \text{and} \ \ \ \ \ \frac{\partial w_2}{\partial x} = \frac{\frac{\partial}{\partial x} (A_1 + A_2)-I_4}{\sigma(x,w_2)}.
\end{equation}

\end{proof}

\subsection{Asymptotic Approximations of the Internal Eigenvalues as $\epsilon \to 0$}

The characteristic polynomial of the matrix $M$ in \eqref{eq:semilinear} is
\begin{equation}
\label{eq:CharPoly}
p(\lambda) = \left[(\lambda-u_1)^2-c_1^2 \right] \left[ (\lambda-u_2)^2-c_2^2\right]-r \frac{g A_1}{\sigma_2} c_2^2.
\end{equation}
Due to the last term, the eigenvalues have no explicit simple formulas. The following proposition provides us with a first order approximation for the internal eigenvalues.

\begin{proposition}
The two eigenvalues associated with the internal layer satisfy
\begin{equation}
\label{eq:InternalEig}
\lambda_{int}^{\pm} \approx \widehat u \pm \sqrt{\frac{\epsilon \frac{g A_1}{\sigma_1} c_2^2}{c_1^2+c_2^2} \left( 1- \frac{c_1^2}{\frac{g A_1}{\sigma_1}} \frac{(u_2-u_1)^2}{\epsilon (c_1^2+c_2^2)}\right)} \ \ \ \text{as} \ \ \ \epsilon \to 0,
\end{equation}
where 
\[
\widehat u = \frac{c_2^2 u_1+c_1^2 u_2}{c_2^2+c_1^2}
\]
is the convective velocity.
\end{proposition}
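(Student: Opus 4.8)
The plan is to carry out a singular-perturbation expansion of the characteristic polynomial \eqref{eq:CharPoly} about the convective velocity $\widehat u$, in the regime $\epsilon\to 0$ with the shear $\delta:=u_2-u_1$ correspondingly small. The first step is to clean up the constant term of $p$: the definition \eqref{eq:c_1} of $c_1$ yields the exact identity $r\,gA_1/\sigma_2=c_1^2-\epsilon\,gA_1/\sigma_1$, so that
\[
p(\lambda)=\big[(\lambda-u_1)^2-c_1^2\big]\big[(\lambda-u_2)^2-c_2^2\big]-\Big(c_1^2-\epsilon\tfrac{gA_1}{\sigma_1}\Big)c_2^2 .
\]
Expanding the product and cancelling the two $c_1^2c_2^2$ contributions leaves the \emph{exact} reduced form
\[
p(\lambda)=(\lambda-u_1)^2(\lambda-u_2)^2-c_2^2(\lambda-u_1)^2-c_1^2(\lambda-u_2)^2+\epsilon\tfrac{gA_1}{\sigma_1}\,c_2^2 ,
\]
on which the expansion is performed.

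Next I would set $\lambda=\widehat u+\nu$ and use the definition of $\widehat u$ to compute the two shifts, $\widehat u-u_1=c_1^2\delta/(c_1^2+c_2^2)$ and $\widehat u-u_2=-c_2^2\delta/(c_1^2+c_2^2)$. The weighting by $c_2^2$ and $c_1^2$ in $\widehat u$ is exactly what makes $c_2^2(\widehat u-u_1)+c_1^2(\widehat u-u_2)=0$; consequently, expanding $-c_2^2(\lambda-u_1)^2-c_1^2(\lambda-u_2)^2$ in $\nu$, the terms linear in $\nu$ cancel, the $\nu^2$ terms combine to $-(c_1^2+c_2^2)\nu^2$, and the $\nu$-independent remainder collapses, via $c_2^2(\widehat u-u_1)^2+c_1^2(\widehat u-u_2)^2=c_1^2c_2^2\delta^2/(c_1^2+c_2^2)$, to $-c_1^2c_2^2\delta^2/(c_1^2+c_2^2)$. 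Thus $p(\widehat u+\nu)=0$ becomes the exact relation
\[
(c_1^2+c_2^2)\,\nu^2+\frac{c_1^2c_2^2\delta^2}{c_1^2+c_2^2}-\epsilon\,\frac{gA_1}{\sigma_1}\,c_2^2=\big[(\nu+\widehat u-u_1)(\nu+\widehat u-u_2)\big]^2 .
\]

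In the distinguished balance $\nu=O(\sqrt\epsilon)$, $\delta=O(\sqrt\epsilon)$ — precisely the regime in which the internal characteristic field sits near the threshold of hyperbolicity — the right-hand side is $O(\epsilon^2)$ and is negligible against the $O(\epsilon)$ terms on the left. Dropping it and solving for $\nu^2$,
\[
\nu^2=\frac{\epsilon\frac{gA_1}{\sigma_1}c_2^2}{c_1^2+c_2^2}-\frac{c_1^2c_2^2\delta^2}{(c_1^2+c_2^2)^2}=\frac{\epsilon\frac{gA_1}{\sigma_1}c_2^2}{c_1^2+c_2^2}\left(1-\frac{c_1^2}{\frac{gA_1}{\sigma_1}}\frac{\delta^2}{\epsilon(c_1^2+c_2^2)}\right),
\]
and $\lambda_{int}^{\pm}=\widehat u\pm\sqrt{\nu^2}$ is exactly \eqref{eq:InternalEig}. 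Note that $c_1^2$ and $\widehat u$ themselves depend on $\epsilon$ through \eqref{eq:c_1}; replacing them by their $\epsilon\to 0$ limits perturbs the right-hand side only at order $\epsilon^2$, so the stated formula is insensitive to this.

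The part that demands care rather than ingenuity is the order bookkeeping: one must commit to the balance $\delta^2\sim\epsilon$, $\nu\sim\sqrt\epsilon$ (equivalently, a bounded bulk Richardson-type ratio) and check that every discarded contribution — the quartic remainder on the right-hand side above, together with the $O(\epsilon)$ corrections buried inside $c_1^2$ and $\widehat u$ — is genuinely $O(\epsilon^2)$, so that \eqref{eq:InternalEig} holds with relative error $O(\epsilon)$. It is also worth recording that the approximation is informative exactly when the parenthesis in \eqref{eq:InternalEig} is nonnegative, i.e. when $\epsilon\frac{gA_1}{\sigma_1}(c_1^2+c_2^2)\ge c_1^2\delta^2$ — the asymptotic hyperbolicity condition for the internal field — whereas for order-one shear the quartic remainder is no longer small and \eqref{eq:InternalEig} is genuinely a near-threshold expansion. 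To make the argument rigorous without committing to the scaling a priori, one rescales $u_2-u_1=\sqrt\epsilon\,d$ and $\nu=\sqrt\epsilon\,m$; the exact relation above, divided by $\epsilon$, becomes a regular perturbation problem in $\epsilon$ whose $\epsilon=0$ limit is precisely the equation for $m^2$ solved above, and (away from marginal hyperbolicity, $m\neq 0$) the implicit function theorem then produces the internal eigenvalues as $\widehat u+\sqrt\epsilon\,m(\epsilon)$ with $m(0)$ as stated.
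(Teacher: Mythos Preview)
Your proof is correct and follows essentially the same approach as the paper: both reduce the characteristic polynomial via the identity $c_1^2c_2^2-r\frac{gA_1}{\sigma_2}c_2^2=\epsilon\frac{gA_1}{\sigma_1}c_2^2$ and then discard the quartic term $(\lambda-u_1)^2(\lambda-u_2)^2$ as higher order, leaving the quadratic $c_2^2(\lambda-u_1)^2+c_1^2(\lambda-u_2)^2=\epsilon\frac{gA_1}{\sigma_1}c_2^2$ whose roots are \eqref{eq:InternalEig}. Your shift $\lambda=\widehat u+\nu$ is simply a cleaner way to solve that quadratic (the paper just asserts its roots), and your explicit commitment to the scaling $\delta^2\sim\epsilon$, together with the implicit-function-theorem remark, supplies the order bookkeeping that the paper leaves implicit.
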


\begin{proof}

Expanding the characteristic polynomial we get
\[
c_2^2 (\lambda-u_1)^2 + c_1^2 (\lambda-u_2)^2-(\lambda-u_1)^2 (\lambda-u_2)^2 = c_1^2 c_2^2 - r \frac{g A_1}{\sigma_2} c_2^2 = \epsilon \frac{g A_1}{\sigma_1} c_2^2.
\]
Dividing by $\epsilon \frac{g A_1}{\sigma_1} c_2^2$, we get
\[
\frac{(\lambda-u_1)^2}{\epsilon \frac{g A_1}{\sigma_1} }+ \frac{c_1^2}{\frac{gA_1}{\sigma_1}} \frac{(\lambda-u_2)^2}{\epsilon c_2^2}-\epsilon \frac{(\lambda-u_1)^2}{\epsilon \frac{g A_1}{\sigma_1}} \frac{(\lambda-u_2)^2}{\epsilon c_2^2} = 1.
\]
The conditions for vanishing eigenvalues give us the composite Froude number
\begin{equation}
\label{eq:CompositeFroude}
G^2 = \frac{u_1^2}{\epsilon \frac{g A_1}{\sigma_1}}+\frac{c_1^2}{\frac{gA_1}{\sigma_1}}\frac{u_2^2}{\epsilon c_2^2}-\epsilon \frac{u_1^2}{\epsilon \frac{g A_1}{\sigma_1}} \frac{u_2^2}{\epsilon c_2^2},
\end{equation}
where $c_1$ and $c_2$ are given, respectively, by equations \eqref{eq:c_1} and \eqref{eq:c_2}. We note that $\frac{c_1^2}{gA_1/\sigma_1} \to \frac{\sigma_1}{\sigma_2}$ as  $\epsilon \to 0$. So, the first two terms in the above equation are order 1 and
\[
 \frac{(\lambda-u_1)^2}{\epsilon \frac{g A_1}{\sigma_1}}+\frac{c_1^2}{g A_1/\sigma_1}\frac{(\lambda-u_2)^2}{\epsilon c_2^2} \approx 1.
\]
The roots of the above quadratic polynomial are those given in \eqref{eq:InternalEig}.

\end{proof}

\subsection{Asymptotic Approximations of the External Eigenvalues as $u_2-u_1 \to 0$, $\epsilon \to 0$}

Approximate explicit formulas of the external eigenvalues can obtained when $u_2-u_1$ and $\epsilon$ are both small. In that case, the fluid behaves like a one layer shallow-water flow. The details are given in the following proposition.

\begin{proposition}
Assume $u_1,A_1,A_2,\sigma_1$ and $\sigma_2$ are all fixed. Let $\delta = (u_2-u_1)/\bar c$, and $\epsilon = 1-r$ as above. Let us assume that both parameters go to zero at the same rate. The external eigenvalues satisfy
\begin{equation}
\label{eq:ExternalEig}
 \lambda_{ext}^{\pm} = \overline u \pm \overline c 
 \mp 
\frac{\epsilon}{2} \frac{\frac{gA_1}{\sigma_2}}{\sqrt{\frac{g(A_1+A_2)}{\sigma_2}}} \left( 1- \frac{ \frac{g A_1}{\sigma_1} }{\frac{g (A_1+A_2)}{\sigma_2} }\right) + \bar c \;  O(\delta^2), \ \ \ \text{ as } \ \ \delta \to 0,
\end{equation}
where
\[
\overline u = \frac{A_1 u_1+A_2 u_2}{A_1+A_2}, \ \ \ \ \ \ \  \overline c = \sqrt{\frac{g (A_1+A_2)}{\sigma_2}}
\]
are the vertically averaged velocity and the speed of sound of a one-layer shallow-water system with total cross-sectional area $A_1+A_2$ and surface width $\sigma_2$.
\end{proposition}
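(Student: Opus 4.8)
The plan is to read the characteristic polynomial \eqref{eq:CharPoly} as a perturbation of the one-layer characteristic polynomial and to apply the implicit function theorem at a simple root. First I would make the parameter dependence of the coefficients explicit. With $A_1,A_2,\sigma_1,\sigma_2,u_1$ held fixed, set $a=gA_1/\sigma_2$, $b=gA_2/\sigma_2=c_2^2$ and $\gamma=gA_1(1/\sigma_1-1/\sigma_2)$, so that, by \eqref{eq:c_1} and \eqref{eq:c_2}, $c_1^2=a+\epsilon\gamma$, the coupling term equals $rg(A_1/\sigma_2)c_2^2=(1-\epsilon)ab$, and $\overline c^2=a+b$. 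Writing $u_2=u_1+\overline c\,\delta$ and using $\overline u=(A_1u_1+A_2u_2)/(A_1+A_2)$ gives $u_1=\overline u-\tfrac{A_2}{A_1+A_2}\overline c\,\delta$ and $u_2=\overline u+\tfrac{A_1}{A_1+A_2}\overline c\,\delta$. After the shift $\mu=\lambda-\overline u$, the polynomial \eqref{eq:CharPoly} becomes
\[
P(\mu;\delta,\epsilon)=\Big[\big(\mu+\tfrac{A_2}{A_1+A_2}\overline c\,\delta\big)^2-a-\epsilon\gamma\Big]\Big[\big(\mu-\tfrac{A_1}{A_1+A_2}\overline c\,\delta\big)^2-b\Big]-(1-\epsilon)ab .
\]
At $\delta=\epsilon=0$ this factors as $P(\mu;0,0)=(\mu^2-a)(\mu^2-b)-ab=\mu^2(\mu^2-\overline c^2)$, whose nonzero roots $\mu=\pm\overline c$ are simple (the double root $\mu=0$ being the collapsed internal pair of the previous proposition). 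Hence near $(\delta,\epsilon)=(0,0)$ there are smooth branches $\mu=\Phi_\pm(\delta,\epsilon)$ with $\Phi_\pm(0,0)=\pm\overline c$, and the external eigenvalues are $\lambda_{ext}^\pm=\overline u+\Phi_\pm(\delta,\epsilon)$.

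Next I would Taylor-expand $\Phi_\pm$ about $(0,0)$. Differentiating $P(\Phi_\pm;\delta,\epsilon)\equiv0$ yields $\partial_\delta\Phi_\pm=-P_\delta/P_\mu$ and $\partial_\epsilon\Phi_\pm=-P_\epsilon/P_\mu$, all evaluated at $(\pm\overline c,0,0)$. A short computation gives $P_\mu(\pm\overline c,0,0)=\pm2\overline c^3$ and, using the identities $\overline c^2-a=b$, $\overline c^2-b=a$, the crucial cancellation
\[
P_\delta(\pm\overline c,0,0)=\pm\frac{2\overline c^2}{A_1+A_2}\big(A_2a-A_1b\big)=0 ,
\]
because $A_2a=gA_1A_2/\sigma_2=A_1b$. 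This is the heart of the matter: the linear-in-$\delta$ correction disappears precisely because $\overline u$ is the area-weighted mean velocity. Finally $P_\epsilon(\pm\overline c,0,0)=-\gamma(\overline c^2-b)+ab=a(b-\gamma)=\tfrac{gA_1}{\sigma_2}\big(\overline c^2-\tfrac{gA_1}{\sigma_1}\big)$, so
\[
\partial_\epsilon\Phi_\pm(0,0)=\mp\frac12\,\frac{gA_1/\sigma_2}{\sqrt{g(A_1+A_2)/\sigma_2}}\Big(1-\frac{gA_1/\sigma_1}{g(A_1+A_2)/\sigma_2}\Big).
\]

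To conclude I would invoke smoothness of $\Phi_\pm$: $\Phi_\pm(\delta,\epsilon)=\pm\overline c+\partial_\delta\Phi_\pm(0,0)\,\delta+\partial_\epsilon\Phi_\pm(0,0)\,\epsilon+O(\delta^2+\epsilon^2)$; since the $\delta$-coefficient vanishes and $\delta,\epsilon$ tend to zero at the same rate, the remainder is $O(\delta^2)=\overline c\,O(\delta^2)$, and adding $\overline u$ reproduces exactly \eqref{eq:ExternalEig}. The only genuine obstacle is organizing the algebra so that the $O(\delta)$ term manifestly cancels; once the coefficients are written in the $(a,b,\gamma)$ notation and the identities $\overline c^2=a+b$, $A_2a=A_1b$ are in hand, the rest is bookkeeping. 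Equivalently, one can bypass the implicit function theorem by substituting the ansatz $\lambda=\overline u\pm\overline c+\eta$ with $\eta=O(\epsilon)$ directly into $p(\lambda)=0$ and solving for $\eta$ to leading order; the same three derivatives $P_\mu,P_\delta,P_\epsilon$ govern the answer.
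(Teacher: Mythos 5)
Your proposal is correct: I checked the algebra ($c_1^2=a+\epsilon\gamma$, the coupling term $(1-\epsilon)ab$, $P_\mu(\pm\bar c,0,0)=\pm 2\bar c^3$, the cancellation $A_2a=A_1b$ killing $P_\delta$, and $P_\epsilon=a\bigl(\bar c^2-\tfrac{gA_1}{\sigma_1}\bigr)$), and the resulting first-order correction reproduces \eqref{eq:ExternalEig} exactly. At bottom this is the same computation as the paper's: both are first-order perturbations of the characteristic polynomial \eqref{eq:CharPoly} about the one-layer roots. The differences are organizational but worth noting. The paper posits the ansatz $\lambda=\lambda_o+\lambda_1+\bar c\,O(\delta^2)$ centered at $u_1$, so its first-order term $\lambda_1$ carries both the velocity-shear piece $\delta_1\tfrac{A_2}{A_1+A_2}$ and the $\epsilon$ piece, and only at the very end does $u_1+\delta_1\tfrac{A_2}{A_1+A_2}$ recombine into $\bar u$; you instead shift to $\mu=\lambda-\bar u$ at the outset, which makes the $O(\delta)$ coefficient vanish identically through $A_2a=A_1b$ and isolates the $\epsilon$-correction cleanly. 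You also ground the expansion in the implicit function theorem at the simple roots $\mu=\pm\bar c$ of $\mu^2(\mu^2-\bar c^2)$, which justifies the existence and smoothness of the external branches (and hence the $O(\delta^2)$ remainder under the assumption $\epsilon=O(\delta)$) — a point the paper's formal ansatz leaves implicit; your remark that the double root $\mu=0$ corresponds to the internal pair correctly explains why the same argument does not apply there. So your write-up buys a rigor upgrade and a more transparent cancellation, at the cost of introducing the $(a,b,\gamma)$ bookkeeping; the paper's version is shorter but assumes the form of the expansion.
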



\begin{proof}
 In what follows, we assume the parameters $A_1,A_2,u_1,\sigma_1,\sigma_2$ are all fixed. The only parameter that varies is $u_2 = u_1+\delta_1$, where $\delta_1 = \delta \;  \bar c$. We look for an expansion of the form
 \[
 \lambda = \lambda_o + \lambda_1 + \bar c \; O(\delta^2),
 \]
 with $\lambda_o$ independent of $\epsilon$ and $\delta_1$, and $\lambda_1 = O(\delta_1)$. Then
 \begin{align}
 0 = & \left[ (\lambda_o -u_1 + \lambda_1  )^2-c_1^2 \right] \left[ (\lambda_o -u_1 +\lambda_1- \delta_1 )^2 - c_2^2 \right] - r \frac{g A_1}{\sigma_2} c_2^2 + O(\delta_1^2) \no \\[0.15in]
=   & \left[ (\lambda_o-u_1)^2-\frac{g A_1}{\sigma_2} \right] \left[ (\lambda_o-u_1)^2-\frac{g A_2}{\sigma_2} \right] -\frac{g A_1}{\sigma_2} \frac{g A_2}{\sigma_2} + 2(\lambda_o-u_1) (\lambda_1-\delta_1 ) \left( (\lambda_o-u_1)^2-\frac{g A_1}{\sigma_2} \right)  \\[0.15in]
\ & + \left( 2 (\lambda_o-u_1) \lambda_1+\epsilon \left( \frac{g A_1}{\sigma_2}-\frac{gA_1}{\sigma_1} \right) \right) \left( (\lambda_o-u_1)^2-\frac{g A_2}{\sigma_2}  \right) + \epsilon \frac{g A_1}{\sigma_2} \frac{g A_2}{\sigma_2} + O(\delta_1^2). \no
\end{align}
 Collecting order 1 terms we get
 \[
   \left[ (\lambda_o-u_1)^2-\frac{g A_1}{\sigma_2} \right] \left[ (\lambda_o-u_1)^2-\frac{g A_2}{\sigma_2} \right] -\frac{g A_1}{\sigma_2} \frac{g A_2}{\sigma_2}  = 0,
 \]
 with solution 
 \[
 \lambda_o - u_1 = \pm \sqrt{\frac{g(A_1+A_2)}{\sigma_2}}.
 \]
 Collecting order $\delta = O(\epsilon)$ terms, we get
 \begin{equation}
 \begin{array}{l}
  2(\lambda_o-u_1) (\lambda_1-\delta_1 ) \left( (\lambda_o-u_1)^2-\frac{g A_1}{\sigma_2} \right)\\
 +
\left( 2 (\lambda_o-u_1) \lambda_1+\epsilon \left( \frac{g A_1}{\sigma_2}-\frac{gA_1}{\sigma_1} \right) \right) \left( (\lambda_o-u_1)^2-\frac{g A_2}{\sigma_2}  \right) + \epsilon \frac{g A_1}{\sigma_2} \frac{g A_2}{\sigma_2} = 0,
\end{array}
 \end{equation}
 with solution
 \[
 \lambda_1 = \delta_1 \frac{A_2}{A_1+A_2} \mp \epsilon \frac{\frac{gA_1}{\sigma_2}}{2\sqrt{\frac{g(A_1+A_2)}{\sigma_2}} } \left( 1-\frac{\sigma_2}{\sigma_1} \frac{A_1}{A_1+A_2} \right).
 \]
 Substituting $\lambda_o$ and $\lambda_1$ we get
 \[
 \lambda = \frac{A_1 u_1+ A_2 u_2}{A_1+A_2} \pm \sqrt{\frac{g(A_1+A_2)}{\sigma_2}} 
 \mp 
\frac{\epsilon}{2} \frac{\frac{gA_1}{\sigma_2}}{\sqrt{\frac{g(A_1+A_2)}{\sigma_2}}} \left( 1- \frac{ \frac{g A_1}{\sigma_1} }{\frac{g (A_1+A_2)}{\sigma_2} }\right) +  \bar c \; O(\delta^2),
 \]
 as desired.
\end{proof}

The approximation for the internal \eqref{eq:InternalEig} and external \eqref{eq:ExternalEig} eigenvalues indicates that the system is conditionally hyperbolic with an approximated condition given by
\begin{equation}
\label{eq:KelvinHelmholtz}
(u_2-u_1)^2 \le \frac{g A_1/\sigma_1}{c_1^2} \epsilon (c_1^2+c_2^2). 
\end{equation}
As documented in \cite{AbgrallKarni2009}, loss of hyperbolicity is associated with shear layer Kelvin-Helmholtz instabilities of the internal layer, and in such case the proposed balance law \eqref{eq:swW2Hat} is no longer valid to describe the flow. 
\subsection{Eigenvalue Bounds}

Assuming we are in the hyperbolic regime where all the eigenvalues are real, and denote by $\gamma_1^\pm, \gamma_2^\pm$ the roots of 
\[
(\gamma-u_1)^2-c_1^2 = \sqrt{r} \frac{gA_1}{\sigma_2}, \ \ \ \ \ \ (\gamma-u_2)^2-c_2^2 = \sqrt{r} c_2^2,
\]
given by
\begin{equation}
\label{eq:EigBounds}
\gamma_1^\pm = u_1\pm \sqrt{\sqrt{r}(1+\sqrt{r})\frac{gA_1}{\sigma_2}+\epsilon \frac{gA_1}{\sigma_1}}, \ \ \ \ \text{and} \ \ \ \ \gamma_2^\pm = u_2 \pm \sqrt{1+\sqrt{r}}c_2.
\end{equation}

The following proposition gives us the eigenvalue bounds. Such bounds will be used to define the one-sided local speeds, which in turn are used in the numerical scheme of Section \ref{sec:scheme}.

\begin{proposition}
\label{prop:EigBounds}
Let us assume that the eigenvalues of the coefficient matrix $A$ in equation \eqref{eq:semilinear} are all real. Then they all fall in the interval $[\min(\gamma_{1,2}^\pm), \max(\gamma_{1,2}^\pm)]$.
\end{proposition}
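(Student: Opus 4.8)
The plan is to show that every real eigenvalue $\lambda$ of $M$ must satisfy $\min(\gamma_{1,2}^\pm) \le \lambda \le \max(\gamma_{1,2}^\pm)$ by arguing contrapositively: if $\lambda$ lies outside the closed interval spanned by the four numbers in \eqref{eq:EigBounds}, then $p(\lambda) \ne 0$, where $p$ is the characteristic polynomial \eqref{eq:CharPoly}. The key is to control the sign of each of the two bracketed factors $[(\lambda-u_1)^2 - c_1^2]$ and $[(\lambda-u_2)^2 - c_2^2]$ in terms of how far $\lambda$ is from $u_1$ and $u_2$, and to compare their product against the constant last term $r\frac{gA_1}{\sigma_2}c_2^2$.

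First I would record the factorization of the last term: by \eqref{eq:c_1} we have $c_1^2 c_2^2 - r\frac{gA_1}{\sigma_2}c_2^2 = \epsilon \frac{gA_1}{\sigma_1}c_2^2 \ge 0$, so $r\frac{gA_1}{\sigma_2}c_2^2 \le c_1^2 c_2^2$, and also $r\frac{gA_1}{\sigma_2}c_2^2 = \sqrt{r}\cdot \sqrt{r}\frac{gA_1}{\sigma_2}\cdot c_2^2 \le \bigl(\sqrt{r}\frac{gA_1}{\sigma_2}\bigr)\bigl(\sqrt{r}c_2^2\bigr) $ — this is exactly why the thresholds in the definition of $\gamma_{1,2}^\pm$ are shifted by $\sqrt{r}\frac{gA_1}{\sigma_2}$ and $\sqrt{r}c_2^2$ respectively, since $\gamma_1^\pm$ are the points where $(\lambda-u_1)^2 - c_1^2$ reaches $\sqrt{r}\frac{gA_1}{\sigma_2}$ and $\gamma_2^\pm$ where $(\lambda-u_2)^2 - c_2^2$ reaches $\sqrt{r}c_2^2$. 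Then I would argue: suppose $\lambda > \max(\gamma_{1,2}^\pm)$. Then $\lambda > \gamma_1^+$ forces $(\lambda-u_1)^2 - c_1^2 > \sqrt{r}\frac{gA_1}{\sigma_2}$, and $\lambda > \gamma_2^+$ forces $(\lambda-u_2)^2 - c_2^2 > \sqrt{r}c_2^2$; both factors are positive, and their product strictly exceeds $\sqrt{r}\frac{gA_1}{\sigma_2}\cdot\sqrt{r}c_2^2 = r\frac{gA_1}{\sigma_2}c_2^2$, hence $p(\lambda) > 0$. Symmetrically, if $\lambda < \min(\gamma_{1,2}^\pm)$, then $\lambda < \gamma_1^-$ and $\lambda < \gamma_2^-$ give the same two strict inequalities on the squares, both factors are again positive with product exceeding $r\frac{gA_1}{\sigma_2}c_2^2$, so $p(\lambda) > 0$. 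In either case $\lambda$ is not a root, which proves the contrapositive.

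The main obstacle — and the point that needs care — is making sure that "outside the interval $[\min(\gamma_{1,2}^\pm),\max(\gamma_{1,2}^\pm)]$" really does force \emph{both} bracketed factors to be positive and large simultaneously, rather than just one of them. This is where the relative positions of $u_1$, $u_2$, and the four $\gamma$'s matter: one has $\gamma_1^- < u_1 < \gamma_1^+$ and $\gamma_2^- < u_2 < \gamma_2^+$ automatically, but $u_1$ and $u_2$ need not be ordered, and the two sub-intervals $[\gamma_1^-,\gamma_1^+]$ and $[\gamma_2^-,\gamma_2^+]$ may or may not overlap. The clean way around this is to observe that $\lambda > \max(\gamma_{1,2}^\pm)$ in particular means $\lambda > \gamma_1^+ \ge u_1$ and $\lambda > \gamma_2^+ \ge u_2$, so $\lambda - u_1$ and $\lambda - u_2$ are both positive and each exceeds the corresponding "radius" $\sqrt{\sqrt r(1+\sqrt r)\frac{gA_1}{\sigma_2}+\epsilon\frac{gA_1}{\sigma_1}}$ and $\sqrt{1+\sqrt r}\,c_2$; squaring (legitimate since both sides are positive) and subtracting $c_1^2$, resp. $c_2^2$, yields the two threshold inequalities. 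The symmetric case $\lambda < \min(\gamma_{1,2}^\pm)$ is handled identically using $u_1 - \lambda, u_2 - \lambda > 0$. Once this bookkeeping on signs is done, the comparison of products is immediate from the factorization of the constant term, and the proposition follows.
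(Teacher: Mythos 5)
Your proof is correct and complete: writing the characteristic polynomial \eqref{eq:CharPoly} as the product of the two shifted quadratics minus the coupling term $r\frac{gA_1}{\sigma_2}c_2^2$, and showing that strictly outside $[\min(\gamma_{1,2}^\pm),\max(\gamma_{1,2}^\pm)]$ both factors strictly exceed the nonnegative thresholds $\sqrt{r}\frac{gA_1}{\sigma_2}$ and $\sqrt{r}c_2^2$ (so $p(\lambda)>0$), is exactly the right argument, and your care about the sign bookkeeping ($\lambda-u_1$, $\lambda-u_2$ both positive beyond the maximum, both negative below the minimum) closes the only delicate point. The paper itself gives no proof, deferring to the original argument of Abgrall and Karni (2009) for straight channels; your proposal is precisely that argument adapted to the channel-geometry thresholds in \eqref{eq:EigBounds}, i.e., essentially the same approach the paper has in mind.
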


The proof of the previous proposition can be adapted to the present case of channels with arbitrary geometry from the original proof in \cite{AbgrallKarni2009}. We note that the values of $\gamma_k^\pm, k=1,2$ are always real, regardless of the hyperbolicity condition.

\begin{figure}[h!]
\center{
\includegraphics[width=0.8 \textwidth ]{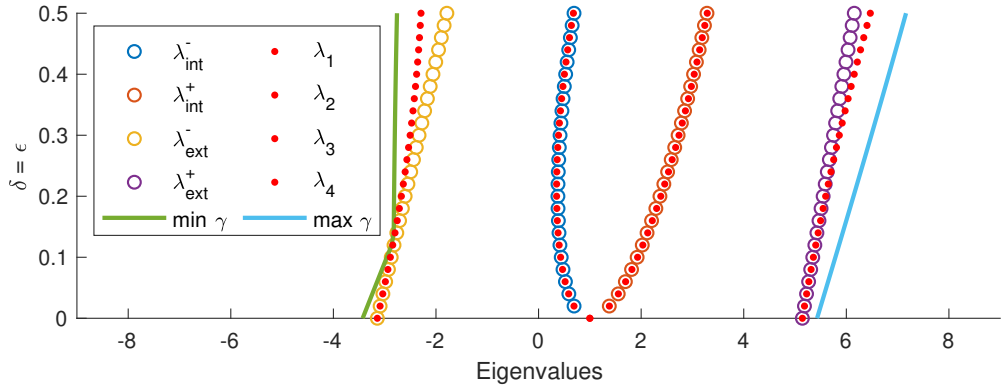}}
\caption{ \label{fig:Eigenvalues} The eigenvalues, the approximations \eqref{eq:ExternalEig} and \eqref{eq:InternalEig} and the bounds in Proposition \ref{prop:EigBounds} are shown for different values of $\delta = \epsilon$ from 0 to 0.5. Here $A_1 = 1.5, A_2 =2, u_1 = 1,  \sigma_1 = 1.4$, and $\sigma_2 = 2$}
\end{figure}

We have verified the approximations in equations \eqref{eq:ExternalEig} and \eqref{eq:InternalEig} numerically for specific values and have found that the approximations are excellent even if $\epsilon$ and $\delta$ are not too small. Figure \ref{fig:Eigenvalues} shows the eigenvalues, the approximations \eqref{eq:ExternalEig} and \eqref{eq:InternalEig} and the bounds in Proposition \ref{prop:EigBounds} for different values of $\delta = \epsilon$ from 0 to 0.5. The approximations are excellent and the bounds are not too far from the minimum and maximum eigenvalues.

\subsection{Steady-state Solutions} \label{sec:steady_states}

System \eqref{eq:sw} admits non-trivial steady-state solutions. The following proposition characterizes conditions satisfied by general smooth steady states. We also describe the internal waves, consisting of a moving internal layer at equilibrium and an external layer at rest, as well as steady states at rest for both layers.

\begin{proposition}
\label{prop:SS}
In the absence of friction and entrainment ($n_i=n_b=0, V_e=0$), smooth steady-state solutions of system \eqref{eq:sw} are characterized by the invariant quantities $Q_1,Q_2,E_1=\frac{1}{2}u_1^2+g(B+h_1+r h_2),E_2=\frac{1}{2}u_2^2+g(B+h_1+h_2)$. From those with given constant discharge $Q_1 \neq 0$, (piecewise) constant energy $E_1$, and constant free surface $w_2$, one can identify flows with internal waves that satisfy
\begin{subequations}  \label{eq:InternalWave}
\begin{align}
g(1-r)h_1^3+\left( g(1-r)B+gr w_2 -E_1 \right) h_1^2 +\frac{1}{2} Q_1^2 & = 0, \\ h_1 + h_2 + B & = w_2,  \\  u_1 & = Q_1/h_1, \\ u_2 & = 0,
\end{align}
\end{subequations}
and steady states of rest that satisfy
\begin{subequations}
\label{eq:SSRest}
\begin{align}
B + h_1 & = \text{Const.}, \\
h_2 & = \text{Const.}, \\
 u_1 = u_2 & = 0
\end{align}
\end{subequations}
\end{proposition}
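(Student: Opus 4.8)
The plan is to characterize smooth steady states by setting $\partial_t(\cdot)\equiv 0$ in the system and exploiting the form \eqref{eq:swW2Hat}, which is equivalent to \eqref{eq:sw} for smooth solutions and in which the momentum source terms are simply $g\,(\text{elevation})\,\partial_x A_i$ (plus friction and entrainment, which vanish here). First, with $S_{f,1}=S_{f,2}=0$ and $S_e=0$, the two mass equations reduce to $\partial_x Q_1=\partial_x Q_2=0$, so $Q_1$ and $Q_2$ are constants. Next I would rewrite each momentum equation: at steady state the internal-layer equation of \eqref{eq:swW2Hat} reads $\partial_x\!\big(\tfrac{Q_1^2}{A_1}+g\widehat w_2 A_1\big)=g\widehat w_2\,\partial_x A_1$, and since $\partial_x(\widehat w_2 A_1)-\widehat w_2\,\partial_x A_1=A_1\,\partial_x\widehat w_2$ this collapses to $-\tfrac{Q_1^2}{A_1^2}\partial_x A_1+gA_1\,\partial_x\widehat w_2=0$. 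Dividing by $A_1>0$ and using $u_1\,\partial_x u_1=-\tfrac{Q_1^2}{A_1^3}\partial_x A_1$ (a consequence of $Q_1=A_1u_1$ being constant), the left-hand side is exactly $\partial_x\!\big(\tfrac12 u_1^2+g\widehat w_2\big)=\partial_x E_1$. The identical manipulation on the external-layer momentum equation gives $\partial_x E_2=0$. Every step is reversible, so a smooth profile solves the steady system if and only if $Q_1,Q_2,E_1,E_2$ are all locally constant; $E_1$ is only piecewise constant because it can jump across an internal hydraulic jump, where \eqref{eq:sw} and \eqref{eq:swW2Hat} cease to agree.

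For the internal-wave subfamily I would impose $Q_1\neq 0$ constant, $w_2$ constant, and $u_2=0$. Then $Q_2=A_2u_2=0$ and $E_2=gw_2$ is constant, so the external-layer steady equations hold automatically, and only the internal-layer invariants remain. From $w_2=B+h_1+h_2$ constant I eliminate $h_2=w_2-B-h_1$, so $\widehat w_2=B+h_1+rh_2=(1-r)(B+h_1)+rw_2$ and the energy invariant becomes $E_1=\tfrac12 u_1^2+g\big[(1-r)(B+h_1)+rw_2\big]$. Substituting $u_1=Q_1/A_1$ and clearing the factor $A_1^2$ --- in the unit-width normalization where $A_1=h_1$ and hence $u_1=Q_1/h_1$ --- turns this into $g(1-r)h_1^3+\big(g(1-r)B+grw_2-E_1\big)h_1^2+\tfrac12 Q_1^2=0$, which is the first equation of \eqref{eq:InternalWave}; the other three equations there are the defining relations $h_1+h_2+B=w_2$, $u_1=Q_1/h_1$, and $u_2=0$.

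For the steady state of rest I would set $u_1=u_2=0$, so $Q_1=Q_2=0$ are constant and, by the characterization above, $E_1=g(B+h_1+rh_2)$ and $E_2=g(B+h_1+h_2)$ are constant. Subtracting gives $g(1-r)h_2=E_2-E_1$ constant, hence $h_2$ is constant, and then constancy of $E_2$ forces $B+h_1$ constant, which is \eqref{eq:SSRest}. Conversely, if $B+h_1$ and $h_2$ are constant and $u_1=u_2=0$, all four invariants are constant, so by the equivalence established above this is a steady state.

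I expect the principal obstacle to be the algebraic bookkeeping in the momentum step: one must track the non-conservative product $g\,(\text{elevation})\,\partial_x A_i$ so that it cancels precisely the matching piece of the flux gradient, and then recognize the residual as a total derivative of $E_i$ uniformly in $x$ (using positivity of the wet areas to divide by $A_i$). A secondary subtlety is that this reduction relies on the equivalence of \eqref{eq:sw} and \eqref{eq:swW2Hat}, which holds only for smooth solutions --- hence the ``piecewise constant $E_1$'' hedge --- and on the relation between $A_1$ and $h_1$ needed to cast the internal-wave condition as the stated cubic.
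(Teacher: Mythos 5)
Your proposal is correct, but it reaches the invariants by a different route than the paper. You work from the rewritten system \eqref{eq:swW2Hat}, where the non-conservative product $g\widehat w_2\,\partial_x A_1$ (resp.\ $g w_2\,\partial_x A_2$) cancels the matching piece of the flux gradient by a one-line product-rule identity, leaving $A_i\,\partial_x E_i=0$ at steady state. The paper instead works directly with the original system \eqref{eq:sw}: its proof consists precisely of the Leibniz-rule computations on the pressure integrals $p_1,p_2$ and the geometric terms $I_1,I_2,\sigma_B B'$ (via \eqref{eq:FTCcont}), establishing the identities
\[
\partial_x\!\left[A_2u_2^2+p_2\right]-gI_2+gh_2\sigma_1\partial_x w_1=u_2\partial_x Q_2+A_2\partial_x E_2,\qquad
\partial_x p_1-gI_1+gh_1\sigma_B\tfrac{dB}{dx}-rgh_2\partial_x A_1=u_1\partial_x Q_1+A_1\partial_x E_1,
\]
for the general cross-section $\sigma(x,z)$. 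Your shortcut is legitimate within the paper's own logic, since the paper asserts the smooth-solution equivalence of \eqref{eq:sw} and \eqref{eq:swW2Hat}; but be aware that the heavy lifting is not eliminated, only relocated --- verifying that equivalence for arbitrary $\sigma(x,z)$ requires essentially the same FTC/Leibniz computation the paper carries out inside this proof, so your argument is only as self-contained as that earlier (undetailed) claim. The identification of the internal-wave family and the rest states is essentially identical in both treatments; your explicit remark that the cubic with $u_1=Q_1/h_1$ presupposes the unit-width identification $A_1=h_1$ is a useful clarification of a point the paper leaves implicit, as is your observation that deducing $h_2=\text{Const.}$ from $E_2-E_1$ uses $r<1$.
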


\begin{proof}
We use the relation in \eqref{eq:FTCcont} to obtain
\[
\partial_x p_2 = \partial_x \left[ g\int_{w_1}^{w_2}(w_2-z)\sigma(x,z)dz\right]
= g\int_{w_1}^{w_2}(w_2-z)\sigma_x(x,z)dz + g A_1 \partial_x w_2 -g h_2 \sigma_1 \partial_x w_1.
\]
Taking the difference between the flux gradients and the source term in the external layer we have
\[
\partial_x \left[ A_2 u_2^2 + p_2 \right] -g I_{2} +gh_2\sigma_1 \partial_x w_1 = u_2 \partial_x Q_2 + A_2\partial_x \left[ \frac{1}{2}u_2^2+g w_2\right].
\]
For the internal layer, we notice that
\[
\partial_x p_1=g\int_{B}^{w_1}(w_1+rh_2-z)\sigma_x(x,z)dz+gA_1\partial_x(w_1+rh_2)+rgh_2 \sigma_1 \partial_x w_1-g(h_1+rh_2)\sigma_B \frac{dB}{dx},
\]
and using
\[
\partial_x A_2= \int_B^{w_1} \sigma_x(x,z) \, dz + \sigma_1 \partial_x w_1 - \sigma_B \frac{dB}{dx},
\]
we obtain
\[
\partial_x p_1-gI_{1}+gh_1 \sigma_B \frac{dB}{dx}-rgh_2\partial_x A_1 = u_1 \partial_x Q_1+A_1 \partial_x \left[ \frac{1}{2}u_1^2+g(w_1+rh_2) \right],
\]
which concludes the proof for the general smooth steady states.

If we set $u_2 = 0$, we obtain $w_2 = B+h_1+h_2 = Const.$ On the other hand, $B+h_1+r h_2 = (1-r) B+ (1-r) h_1 + r w_2$, which implies the conditions \eqref{eq:InternalWave}. 
\end{proof}

\subsection{Entropy Functions}

The existence of entropy functions and entropy inequalities have shown to be helpful in choosing the correct weak solution \cite{oleinik1957discontinuous,Audusse2004}. A numerical scheme that satisfies a fully discrete entropy inequality can be found in \cite{bouchut2008entropy}. To conclude this section and for the sake of completeness in the analysis of the hyperbolic model, in the following proposition we prove that system \eqref{eq:sw} admits one entropy function and entropy inequality that accounts for the contribution of both layers. 

\begin{proposition}
In the absence of entrainment ($V_e=0$), system \eqref{eq:sw} is endowed with an entropy function $\mathcal E = \mathcal  E_1+ r \mathcal E_2$, where $\mathcal E_1= A_1 E_1-p_1E$, and $\mathcal E_2= A_2 E_2-p_2$, and its physically relevant solutions satisfy the entropy inequality
\begin{equation}
\label{eq:Entropy}
\partial_t \mathcal E +\partial_x \left( u_1 (\mathcal E_1+p_1)+r \; u_2 (\mathcal E_2+p_2) \right) \le 0. 
\end{equation}
\end{proposition}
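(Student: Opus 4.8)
The plan is to establish \eqref{eq:Entropy} as an \emph{equality}, up to a manifestly dissipative friction term, for smooth solutions (where \eqref{eq:sw} and \eqref{eq:swW2Hat} agree), and then to read \eqref{eq:Entropy} off as the admissibility condition obeyed by the physically relevant weak solutions, i.e. the one singled out, formally, by the vanishing-viscosity limit. The first move is to rewrite the entropy densities in purely mechanical form using the pressure decompositions already in hand: from $p_1 = g\widehat w_2 A_1 - g\int_B^{w_1} z\,\sigma(x,z)\,dz$ and $p_2 = g w_2 A_2 - g\int_{w_1}^{w_2} z\,\sigma(x,z)\,dz$ one gets $\mathcal E_1 = \frac{1}{2}\frac{Q_1^2}{A_1} + g\int_B^{w_1} z\,\sigma(x,z)\,dz$ (the $r h_2$ inside $\widehat w_2$ cancels out of $\mathcal E_1$) and $\mathcal E_2 = \frac{1}{2}\frac{Q_2^2}{A_2} + g\int_{w_1}^{w_2} z\,\sigma(x,z)\,dz$, and one notes $\mathcal E_1 + p_1 = A_1 E_1$, $\mathcal E_2 + p_2 = A_2 E_2$, so that the proposed flux is simply $Q_1 E_1 + r Q_2 E_2$. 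It is convenient to also record the ``clean'' momentum equations that follow from \eqref{eq:swW2Hat} together with \eqref{eq:FTCcont}: $\partial_t Q_1 + \partial_x(A_1 u_1^2) = -g A_1 \partial_x \widehat w_2 + S_{f,1}$ and $\partial_t Q_2 + \partial_x(A_2 u_2^2) = -g A_2 \partial_x w_2 + S_{f,2}$ (the latter via $\partial_x p_2 = g A_2 \partial_x w_2 + g I_2 - g h_2 \sigma_1 \partial_x w_1$), along with the mass relations $\sigma_1 \partial_t w_1 = -\partial_x Q_1$ and $\sigma_2 \partial_t w_2 = -\partial_x(Q_1 + Q_2)$.

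The core is the classical ``multiply momentum by velocity, differentiate the density in time, substitute'' manipulation, carried out layer by layer. For layer $2$: differentiate $\mathcal E_2$ in $t$ (by Leibniz the $z$-integral contributes only the moving-limit terms $w_2 \sigma_2 \partial_t w_2 - w_1 \sigma_1 \partial_t w_1$, since $\sigma$ carries no explicit $t$-dependence), substitute $u_2\times(\text{momentum})$ and the mass relations, and use the elementary collapse $-u_2 \partial_x(A_2 u_2^2) + \frac{1}{2} u_2^2 \partial_x Q_2 = -\partial_x\!\left(\frac{1}{2} A_2 u_2^3\right)$ for the kinetic part; the width-variation term $g I_2$ and the moving-interface term $g h_2 \sigma_1 \partial_x w_1$ are exactly those produced by $\partial_x p_2$, so they cancel, leaving $\partial_t \mathcal E_2 = -\partial_x(Q_2 E_2) - g h_2 \partial_x Q_1 + u_2 S_{f,2}$. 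The same computation for layer $1$, using the clean momentum form and $\widehat w_2 - w_1 = r h_2$, yields $\partial_t \mathcal E_1 = -\partial_x(Q_1 E_1) + g r h_2 \partial_x Q_1 + u_1 S_{f,1}$.

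Forming $\mathcal E = \mathcal E_1 + r \mathcal E_2$ then makes the non-conservative coupling cancel exactly: the leftover $+g r h_2 \partial_x Q_1$ from layer $1$ is annihilated by $r$ times the $-g h_2 \partial_x Q_1$ from layer $2$, and one arrives at $\partial_t \mathcal E + \partial_x(Q_1 E_1 + r Q_2 E_2) = u_1 S_{f,1} + r u_2 S_{f,2}$. To finish, show the right-hand side is $\le 0$: abbreviating the nonnegative interfacial and bottom friction coefficients from \eqref{eq:Friction} by $\alpha = g\, n_i^2\, |(Q_1 A_1 + Q_2 A_2)/(A_1+A_2)| / R^{4/3} \ge 0$ and $\beta = g\, n_b^2\, |(Q_1 A_1 + Q_2 A_2)/(A_1+A_2)| / R^{4/3} \ge 0$, one has $S_{f,1} = -r\alpha(u_1-u_2) - \beta u_1$ and $S_{f,2} = \alpha(u_1-u_2)$, whence $u_1 S_{f,1} + r u_2 S_{f,2} = -r\alpha(u_1-u_2)^2 - \beta u_1^2 \le 0$ — precisely the mechanical-energy dissipation rate of the model. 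For non-smooth solutions the non-conservative products require a notion of solution; interpreting \eqref{eq:Entropy} as the selection principle (equivalently, passing to the $\nu \to 0$ limit of the $\nu\,\partial_{xx}{\bf W}$-regularization of \eqref{eq:swW2Hat}, which adds $\nu\,\partial_{xx}\mathcal E^\nu$ plus a non-positive contribution to the balance above) completes the argument.

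The main obstacle is the bookkeeping in the middle step: $\partial_x p_1$ and $\partial_x p_2$ must each be expanded in full through \eqref{eq:FTCcont}, and every width-variation quantity ($I_1, I_2, I_3$, the $\sigma_B B'$ terms, the moving-interface terms $h_2 \sigma_1 \partial_x w_1$) has to be tracked so that all of them cancel and only an exact $x$-derivative plus the friction term survive. The delicate point is keeping $\widehat w_2$ (not $w_1$) in the layer-$1$ energy balance, since it is exactly the resulting discrepancy $g(\widehat w_2 - w_1)\partial_x Q_1 = g r h_2 \partial_x Q_1$ that is designed to be annihilated by the layer-$2$ term; a secondary, more conceptual difficulty is the meaning of the non-conservative products for weak solutions, which is what forces the ``physically relevant'' qualifier in the statement.
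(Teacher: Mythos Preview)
Your proposal is correct and follows essentially the same route as the paper: both derive the layer-wise balances $\partial_t\mathcal E_1+\partial_x(Q_1E_1)=grh_2\,\partial_xQ_1+u_1S_{f,1}$ and $\partial_t\mathcal E_2+\partial_x(Q_2E_2)=-gh_2\,\partial_xQ_1+u_2S_{f,2}$, combine them as $\mathcal E_1+r\mathcal E_2$ so the coupling term cancels, and then observe that $u_1S_{f,1}+ru_2S_{f,2}=-r\alpha(u_1-u_2)^2-\beta u_1^2\le 0$. The only (cosmetic) difference is that the paper computes $\partial_t(A_kE_k)$ and $\partial_t p_k$ separately and subtracts, whereas you first rewrite $\mathcal E_k$ in the mechanical form $\tfrac12 Q_k^2/A_k+g\!\int z\,\sigma\,dz$ and use the ``clean'' momentum equations from \eqref{eq:swW2Hat}; this is a minor reorganization of the same calculation.
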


\begin{proof}
Adding friction or viscous terms is usually the first step in deriving entropy inequalities. However, system \eqref{eq:sw} already admits friction terms. Without loss of generality, we assume that the Manning coefficients are positive. Using the fundamental theorem of calculus we get the relation $\partial_t p_2 = g A_2 \partial_t w_2-g h_2 \sigma_1 \partial_t w_1$. Multiplying the momentum equation of the external layer by $u_2$, the resulting equation can be rewritten as
\[
\partial_t (A_2 E_2) + \partial_x (Q_2 E_2) = g \partial_t (A_2 w_2) + g w_2 \partial_x Q_2 - g\frac{n_i^2 \left| \frac{Q_1 A_1+Q_2 A_2}{A_1+A_2}\right|}{R^{4/3}} (u_2-u_2) u_2.
\]
Subtracting the two equations and using $\partial_t A_1= \sigma_1 \partial_t w_1$, we get
\begin{equation}
\label{eq:Entropy1}
\partial_t \mathcal E_2 + \partial_x (u_2 (\mathcal E_2+p_2)) =- g h_2 \partial_x Q_1- g\frac{n_i^2 \left| \frac{Q_1 A_1+Q_2 A_2}{A_1+A_2}\right|}{R^{4/3}} (u_2-u_1) u_2.
\end{equation}
On the other hand, the pressure in the internal layer satisfies $\partial_t p_1 = g A_1 \partial_t (w_1+r h_2) + r g h_2 \sigma_1 \partial_t w_1$. Multiplying the momentum equation of the internal layer by $u_1$, the resulting equation can be rewritten as
\begin{align}
\partial_t (A_1 E_1) + \partial_x (Q_1 E_1) = & \ g \partial_t (A_1 (w_1+r w_2)) + g (w_1+ r w_2) \partial_x Q_1 \no \\[-0.15in] \ & \ \\[-0.15in] & \ \ \ + r g\frac{n_i^2\left| \frac{Q_1 A_1+Q_2 A_2}{A_1+A_2}\right|}{R^{4/3}} (u_2-u_1) u_1 - g\frac{n_b^2 \left| \frac{Q_1 A_1+Q_2 A_2}{A_1+A_2} \right|}{R^{4/3}} u_1^2 . \no
\end{align}
Subtracting the two equations we get
\begin{equation}
\label{eq:Entropy2}
\partial_t \mathcal E_1 + \partial_x (u_1 (\mathcal E_1+p_1)) = r g h_2 \partial_x Q_1+ r g\frac{n_i^2 \left| \frac{Q_1 A_1+Q_2 A_2}{A_1+A_2}\right|}{R^{4/3}} (u_2-u_1) u_1 - g\frac{n_b^2 \left| \frac{Q_1 A_1+Q_2 A_2}{A_1+A_2} \right|}{R^{4/3}} u_1^2.
\end{equation}
Finally, using equations \eqref{eq:Entropy1} and \eqref{eq:Entropy2} and defining $\mathcal E= \mathcal  E_1+ r \mathcal E_2$, we get
\[
\partial_t \mathcal E + \partial_x \left( u_1 (\mathcal E_1+p_1)+r\; u_2 (\mathcal E_2+p_2) \right) = - r g\frac{n_i^2 \left| \frac{Q_1 A_1+Q_2 A_2}{A_1+A_2}\right|}{R^{4/3}} (u_2-u_1)^2 - g\frac{n_b^2 \left| \frac{Q_1 A_1+Q_2 A_2}{A_1+A_2} \right|}{R^{4/3}} u_1^2,
\]
which concludes the proof.
\end{proof}


\section{Numerical Scheme} \label{sec:scheme}

In this section we describe a central-upwind scheme to approximate the solutions of system \eqref{eq:swW2Hat}. The scheme must satisfy the well-balance and positivity-preserving properties. See \cite{balbas2013non,KP2007,kurganov2009central,KurganovTadmor2000} for more details on central and central-upwind schemes. We extend the scheme here to two-layer flows along channels with arbitrary cross sections. The non-conservative products on the right hand side of \eqref{eq:swW2Hat} and the more complex geometry of the channels considered here makes the extension of these schemes more challenging than those presented in the above references. 

The system now takes the form
\begin{equation} \label{eq:BL}
\frac{\partial {\vect W}}{\partial t} + \frac{\partial}{\partial x} {\vect F}({\vect W}; \sigma, B) = {\vect S}({\vect W}; \sigma, B) ,
\end{equation}
with
\begin{equation}
\label{eq:FS}
{\vect W} = \begin{pmatrix} A_1 \\ Q_1 \\ A_2 \\ Q_2 \end{pmatrix}, \ \ \ \ \ {\vect F}({\vect W}; \sigma, B) = \begin{pmatrix} Q_1 \\[0.04in] \displaystyle{\frac{Q_1^2}{A_1} + g \widehat{w}_2 A_1 } \\ Q_2 \\[0.04in]  \displaystyle{\frac{Q_1^2}{A_1} + g w_2 A_2 } \end{pmatrix}, \ \ \text{ and } \ \ \ 
{\bf S}({\vect W}; \sigma, B) = \begin{pmatrix} S_e \\[0.05in] \displaystyle{g \widehat{w}_2 \partial_x A_1\,} + S_{f,1} + S_e u_1  \\[0.05in] -rS_e \\[0.05in] \displaystyle{g w_2 \partial_x A_2}+S_{f,2} - rS_e u_2\end{pmatrix}.
\end{equation}


\subsection{Discretization of the Channel's Geometry}

In order to arrive at a second order scheme for \eqref{eq:BL} - \eqref{eq:FS}, we shall first address the discretization of the channel geometry given by the width function $\sigma(x,z)$ and the bottom topography $B(x)$. Since our approximate solution of the system \eqref{eq:BL} - \eqref{eq:FS} will be realized as the cell averages $\overline{\vect W}_j$ of the flow quantities over the grid cells $I_j := [x_{j-\half},x_{j+\half}]$, where $\Delta x$ is the spatial scale in the direction of the flow, $x_{j\pm \half} = x_j\pm\frac{\Delta x}{2}$ and $x_j$ is the center of the grid cell, we begin by sampling the bottom topography and width of the channel at the points $x_{j + \half}$. From these samples, we define the cell average of the bottom topography as
\beq
\overline{B}_j := \frac{B_{j+\half} + B_{j - \half}}{2}.
\eeq
And similarly, we define the average width functions
\beq
\overline{\sigma}_j(z) := \frac{\sigma(x_{j+\half},z) + \sigma(x_{j - \half},z)}{2}.
\eeq
Using these, we build a piecewise trapezoidal approximation of the cross-sections of the channel at each cell center $x_j$
\beq
\widetilde{\sigma}_j(z) = \overline{\sigma}_j(z_l) + \frac{\overline{\sigma}_j(z_{l+1}) - \overline{\sigma}_j(z_l)}{\Delta z}(z - z_{l}) \ \ \ \ \text{for} \ \ \ \ z_l \leq z \leq z_{l+1},
\eeq
where $\Delta z$ is a fixed spatial scale in the vertical direction (flow height).

These piecewise (bi-)linear functions allow us to approximate the spatial derivatives of $B(x)$ and $\sigma(x,z)$ with centered divided differences and are consistent with the second order accuracy sought for the scheme (see left panel of Figure \ref{fig:reconstruction}). Unless otherwise noted, in the numerical experiments we use a resolution of $\Delta z = 0.01$.


\subsection{The Semi-discrete Central-upwind Scheme}

With the above partition of the solution grid and discretization of the channel geometry, we denote by $\bv_j(t)$ the computed cell average of $\vect W(x,t)$ over the cell $I_j$,
\beq \label{eq:cell_average}
\vect  \bv_j(t)=\frac{1}{\Delta x} \int_{x_{j-\frac{1}{2}}}^{x_{j+\frac{1}{2}}}\vect  W(x,t) \, dx.
\eeq
Integrating equation \eqref{eq:swW2Hat} over each cell $I_j$, we obtain the semi-discrete formulation
\beq \label{eq:discrete_balance}
\frac{d}{dt}\vect  \bv_j(t)+\frac{1}{\Delta x}\left( \vect F(\vect W(x_{j+\frac{1}{2}},t))-\vect F(\vect  W(x_{j-\frac{1}{2}},t)) \right)=\frac{1}{\Delta x} \int_{x_{j-\frac{1}{2}}}^{x_{j+\frac{1}{2}}}\vect S(\vect W(x,t),x) \ dx,
\eeq
which is approximated by
\begin{equation}
\label{eq:ODESW}
\frac{d}{dt} \vect \bv_{j}(t)=-\frac{\vect H_{j+\frac{1}{2}}-\vect H_{j-\frac{1}{2}}}{\Delta x}+\frac{1}{\Delta x} \int_{x_{j-\frac{1}{2}}}^{x_{j+\frac{1}{2}}} \vect S(\vect W,x) \, dx.
\end{equation}

The flux at the cell interfaces, $\vect F(\vect W(x_{j\pm \frac{1}{2}}),t)$, is approximated by the numerical flux $\vect H_{j\pm \frac{1}{2}}(t)$ in \cite{kurganov2001semidiscrete} 
\begin{equation}
\label{eq:NumFluxH}
\vect H_{j\pm \frac{1}{2}}(t) = \frac{a_{j\pm \frac{1}{2}}^{+} \vect F\left(\vect  W_{j\pm \frac{1}{2}}^-(t) \right)-a_{j\pm \frac{1}{2}}^{-} \vect F\left( \vect W_{j\pm \frac{1}{2}}^+(t)\right)  }{a_{j\pm \frac{1}{2}}^+-a_{j\pm \frac{1}{2}}^-}+\frac{a_{j\pm \frac{1}{2}}^+a_{j \pm \frac{1}{2}}^-}{a_{j\pm \frac{1}{2}}^+-a_{j\pm \frac{1}{2}}^{-}} \left( \vect W_{j\pm \frac{1}{2}}^+(t)-\vect W_{j\pm \frac{1}{2}}^-(t) \right).
\end{equation}
Here, $\vect W_{j\pm \frac{1}{2}}^\pm(t)$ is recovered from the cell averages via a non-oscillatory piecewise polynomial reconstruction
\beq \label{eq:VInerface}
\vect W_{j+\frac{1}{2}}^- := \vect p_{j}(x_{j+\frac{1}{2}}), \ \ \ \text{ and } \ \ \ \vect W_{j+\frac{1}{2}}^+:=\vect p_{j+1}(x_{j+\frac{1}{2}}).
\eeq
Furthermore, the one-sided local speeds in this scheme are approximated using the eigenvalues' bounds given by \eqref{eq:EigBounds}:
\beq
\label{eq:SidedVel}
a_{j\pm \frac{1}{2}}^+ = \max\{ \gamma_{1,j\pm \frac{1}{2}}^+, \gamma_{2,j\pm \frac{1}{2}}^+,0\}, \ \ \ \  \text{ and } \ \ \ \
a_{j\pm \frac{1}{2}}^- = \min\{ \gamma_{1,j\pm \frac{1}{2}}^-, \gamma_{2,j\pm \frac{1}{2}}^-,0\}.
\eeq
We note that in the case where the eigenvalues are real, the inequalities $0 \le a_{j\pm \half}^+ \ge \lambda_{k,j\pm \half}^\pm$, $0 > a_{j\pm \half}^- \le \lambda_{k,j\pm \half}^\pm$ hold for $k = ext, int$. On the other hand, the formulas in equation \eqref{eq:SidedVel} are always real valued even if hyperbolicity is lost. Furthermore, $a_{j\pm \half}^+ - a_{j\pm \half}^- >0 $ is always positive unless $h_{1,j\pm\half}^\pm,u_{1,j\pm \half}^\pm,h_{2,j\pm\half}^\pm,u_{2,j\pm \half}^\pm$ all vanish in a dry state with ``no fluid motion''. Near dry states, we use the regularization of the velocity given by  \eqref{eq:regularized_u} below, and it is usually not negligible. The condition $a_{j\pm \frac{1}{2}}^+ \ge \max(u_{j\pm \frac{1}{2}}^+,u_{j\pm \frac{1}{2}}^-), a_{j\pm \frac{1}{2}}^- \le \min(u_{j\pm \frac{1}{2}}^+, u_{j\pm \frac{1}{2}}^-)$ is automatically satisfied given the expressions in equation \eqref{eq:EigBounds}, which is important for the positivity-preserving property.

\subsection{Positivity Preserving Non-oscillatory Reconstruction} \label{sec:reconstruction}

In order to recover the interface point values $\vect W_{j \pm \half}^\mp(t)$ from the cell averages $\bv_j(t)$, we seek a piecewise polynomial reconstruction at cell $j$ given by
\beq \label{eq:minmodPoly}
\vect p_{j}(x) = \bv_j + \vect W_j'(x-x_j),
\eeq \label{eq:minmodapoly}
with the {\it limited} slopes $\vect W_j'$ calculated as, \cite{vanLeer1997},
\beq \label{eq:minmodSlope}
\vect W_j' = \frac{1}{\Delta x}\text{minmod} (\alpha \Delta_{-}  \overline{\vect{W}}_j, \ \Delta_0  \overline{\vect {W}}_j, \ \alpha \Delta_+  \overline{ \vect{W}}_j), \text{ where } 1\le \alpha < 2, \text{ and}
\eeq
\bdm
\text{minmod}(x_1,x_2,x_3,\ldots,x_k)=
\left\{
\begin{array}{lcl}
\min_j (x_j) & \text{ if } & x_j > 0 ~\forall j \\
\max_j(x_j) & \text{ if } & x_j < 0 ~\forall j \\
0 &  & \text{otherwise}
\end{array}.
\right.
\edm

This non-oscillatory reconstruction is applied directly to the discharges $Q_1$ and $Q_2$. However, in order to ensure the positivity of the water layers and recognize steady states of rest \eqref{eq:SSRest}, the point values of the areas $A_1$ and $A_2$ are obtained by first reconstructing the total elevations of the internal and external layers, $w_1 = B + h_1$ and $w_2 = w_1 + h_2$ respectively. These, unlike $h_1$ and $h_2$, will remain constant for the steady state of rest, so their reconstruction will also render constant point values within and across grid cells, and help us achieve well balance as well as prevent the onset of spurious oscillations.

In order to reconstruct the interface point values of $A_{1, j \mp \half}^{\pm}$ and $A_{2, j \mp \half}^{\pm}$ from their cell averages $\overline{A}_{1,j}$ and $\overline{A}_{2,j}$, we proceed as follows:
\bit
\item Determine the values $z_{l_1}$ and $z_{l_2}$ along the partition $\{z_l\}$ of the vertical axis such that
\beq \label{eq:deconvolve1}
\int_{B_j}^{z_{l_1}} \widetilde{\sigma}_j(z) \, dz \leq \overline{A}_{1,j} < \int_{B_j}^{z_{l_1 + 1}} \widetilde{\sigma}_j(z) \, dz
\eeq
and
\beq 
\int_{B_j}^{z_{l_2}} \widetilde{\sigma}_j(z) \, dz \leq \overline{A}_{1,j} + \overline{A}_{2,j} < \int_{B_j}^{z_{l_2 + 1}} \widetilde{\sigma}_j(z) \, dz.
\eeq \label{eq:deconvolve2}
\item Find the height $\delta w_{1,j}$ of the next trapezoid in the discretization of the $j^{th}$ cross-section of the channel so that
\beq \label{eq:dw1}
\int_{z_{l_1}}^{z_{l_1} + \delta w_{1,j}} \widetilde{\sigma}_j(z) \, dz = \overline{A}_{1,j} - \int_{B_j}^{z_{l_1}} \widetilde{\sigma}_j(z) \, dz,
\eeq
and set $\overline{w}_{1,j} := z_{l_1} + \delta w_{1,j}$.
\item Find the height $\delta w_{2,j}$ so that
\beq \label{eq:dw2}
\int_{z_{l_2}}^{z_{l_2} + \delta w_{2,j}} \widetilde{\sigma}_j(z) \, dz = \overline{A}_{1,j} + \overline{A}_{2,j} - \int_{B_j}^{z_{l_2}} \widetilde{\sigma}_j(z) \, dz,
\eeq
and set $\overline{h}_{2,j} := z_{l_2} + \delta w_{2,j} - \overline{w}_{1,j}$ and $\overline{w}_{2,j} : =\overline{h}_{2,j} + \overline{w}_{1,j}$.
\eit

Note that the left hand side of \eqref{eq:dw1} and \eqref{eq:dw2} are, respectively, increasing functions of $\delta w_1$ and $\delta w_2$, and because of the linearity of $\sigma_j(z)$, the equations are quadratic. Therefore, $\delta w_{1,j}$ and $\delta w_{2,j}$ correspond, each, to one of the solutions of a quadratic equation. As an alternative, we may also determine their values using bisection so as to pick the right solution, which works well here because the cross sectional area is an increasing function of height. 

Once the heights corresponding to the cell averages of the areas are recovered, following the ideas of \cite{kurganov2007second}, we reconstruct their interface values via the minmod reconstruction \eqref{eq:minmodPoly} -- \eqref{eq:minmodSlope}, but ensuring their positivity with the following correction of the reconstructed values where needed (see right panel of Figure \ref{fig:reconstruction}):

\bit

\item For the internal layer, if $w_{1,j - \half}^+ < B_{j - \half}$, then set 
\beq
w_{1, j - \half}^+ := B_{j - \half}+\delta_B, \ \ \  w'_{1,j} := 2(\overline{w}_{1,j} - B_{j - \half} - \delta_B), \ \ \ \text{and} \ \ \ w_{1,j+\frac{1}{2}}^- := \overline{w}_{1,j} + \half w'_{1,j},
\eeq
for some small $\delta_B$, or if $w_{1,j+\frac{1}{2}}^- < B_{j+\frac{1}{2}}$, then set 
\beq
w_{1,j+\frac{1}{2}}^- := B_{j + \frac{1}{2}} + \delta_B, \ \ \ w'_{1,j}  :=  2(B_{j + \half} + \delta_B - \overline{w}_{1,j}), \ \ \ \text{and} \ \ \  w_{1,j - \half}^+ := \overline{w}_{1,j} - \half w'_{1,j}.
\eeq
\item And for the external layer, if $w_{2, j - \half}^+ < w_{1, j - \half}^+$, then set
\beq
w_{2, j - \half}^+ := w_{1, j - \half}^+ + \delta_B, \ \ \  w'_{2,j} := 2(\overline{w}_{2,j} - w_{1, j - \half}^+ - \delta_B), \ \ \ \text{and} \ \ \ w_{2, j + \half}^- := \overline{w}_{2,j} + \half w'_{2,j},
\eeq
or if $w_{2, j + \half}^- < w_{1, j + \half}^- $, then set 
\eit
\beq
w_{2, j + \half}^- := w_{1, j + \half}^- + \delta_B, \ \ \ w'_{2,j}  :=  2(w_{1, j + \half}^- + \delta_B - \overline{w}_{2,j}), \ \ \ \text{and} \ \ \  w_{2, j - \half}^+ := \overline{w}_{2,j} - \half w'_{2,j}.
\eeq
This yields
\beq
h_{1,j \mp \half}^\pm := w_{1,j \mp \half}^\pm-B_{j \mp \half} > 0, \ \ \ \text{and}  \ \ \ \ \ \ h_{2,j \mp \half}^\pm := w_{2, j \mp \half}^\pm - w_{1, j \mp \half}^\pm > 0.
\eeq

\noindent See \cite{KP2007,balbas2014positivity} for more details. For the numerical experiments presented in Sections \ref{sec:LockExchange} and \ref{sec:CurrentsEntrainment}, we use $\delta_B = 10^{-3}$.

\begin{figure}[h!]
\center{
\includegraphics[scale=0.6]{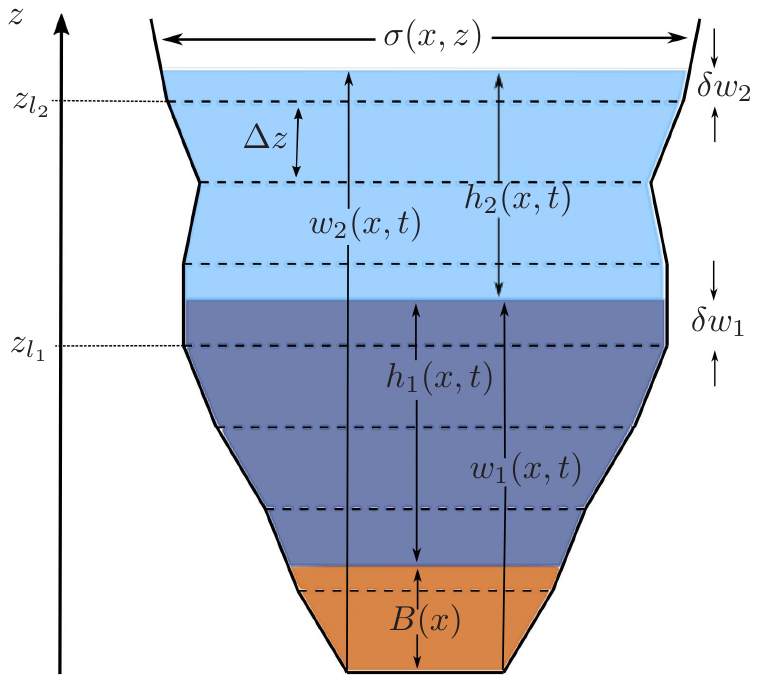} \hspace{0.3in}
\includegraphics[scale=0.6]{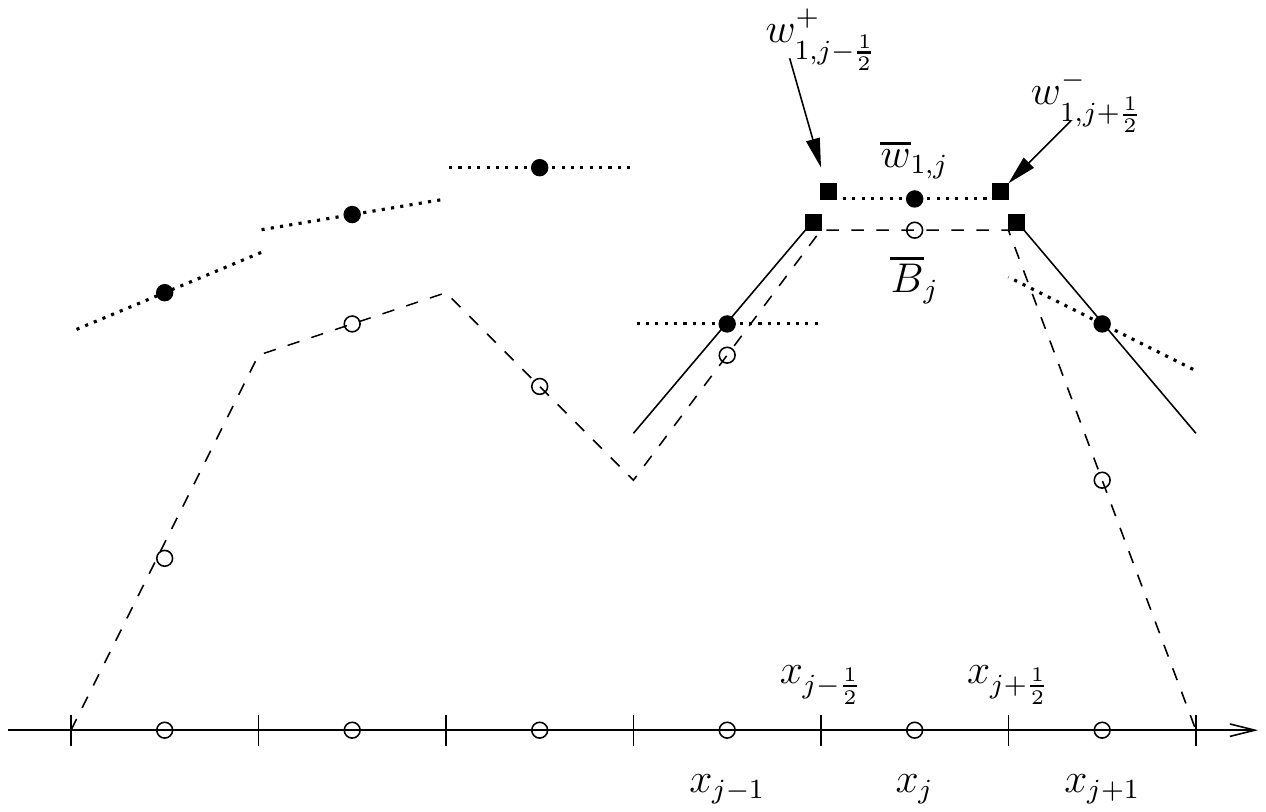}}
\caption{ \label{fig:reconstruction}Left: Piecewise trapezoidal discretization of a channel cross-section showing the quantities involved in the reconstruction of the interface point values of $A_1$ and $A_2$ described by \eqref{eq:deconvolve1} -- \eqref{eq:A_interface}. Right: Positivity preserving reconstruction of $w_1 = B + h_1$.} \label{fig:reconstruction}
\end{figure}

The pointvalues of the wet areas, $A^{\pm}_{1,j \mp \half}$ and $A^{\pm}_{2,j \mp \half}$, are then recovered from these by integrating the interface width,
\beq \label{eq:A_interface}
A_{1,j\mp\half}^{\pm}=\int_{B_{j\mp\half}}^{w_{1,j\mp \half}^\pm} \widetilde{\sigma}_{j \mp \half}(z) \, dz \ \ \ \ \ \text{and} \ \ \ \ \ A_{2,j\mp\half}^{\pm}=\int_{w_{1,j\mp \half}^\pm}^{w_{2,j\mp \half}^\pm} \widetilde{\sigma}_{j \mp \half}(z) \, dz,
\eeq
where
\beq
\widetilde{\sigma}_{j \mp \half }(z) = \sigma(x_{j \mp \half}, z_l) + \frac{\sigma(x_{j \mp \half}, z_{l+1}) - \sigma(x_{j \mp \half},z_l)}{\Delta z}(z - z_l) \ \ \ \ \text{for} \ \ \ \ z_l \leq z \leq z_{l+1}
\eeq
is a piecewise trapezoidal approximation of the channel cross-section at the cell interfaces $x_{j \mp \half}$. These guarantee the positivity of the water layers and will remain constant across cell interfaces for steady states of rest, where the total elevation of the water layers, $w_1 = h_1 + B$ and $w_2 = w_1 + h_2$, remain constant.
\vskip 10 pt
\noindent
{\bf Regularization of Flow Velocity and Discharge for Small $A_1,A_2$\\}

Although the positivity of depth in each layer is preserved, these point-values may still be very small and may lead to large values of the velocity of the flow, $u_1,u_2$. We can prevent this with the regularization technique suggested in \cite{KP2007},
\beq \label{eq:regularized_u}
u_{k,j\pm \frac{1}{2}}^\pm=\frac{\sqrt{2}\, Q_{k,j\pm \frac{1}{2}}^\pm A_{k,j\pm \frac{1}{2}}^{\pm}}{\sqrt{ \left( A_{k,j\pm \frac{1}{2}}^{\pm} \right)^4+\max \left( (A_{k,j\pm \frac{1}{2}}^\pm)^4,\delta_A \right)}}
\eeq
for $k=1,2$. The discharges $Q_{k,j\pm \frac{1}{2}}^\pm$ are then recalculated as
\beq
Q_{k,j\pm \frac{1}{2}}^\pm = A_{k,j\pm \frac{1}{2}}^{\pm} \cdot u_{k,j\pm \frac{1}{2}}^\pm
\eeq
to ensure conservation. The value of $\delta_A$ was empirically determined, usually choosing $\delta_A = 10^{-12}$ in this paper.

\subsection{Well Balance} \label{sec:WellBalance}

Flows described by the balance law \eqref{eq:swW2Hat} admit steady-state solutions of rest satisfying \eqref{eq:SSRest}. Numerical schemes for approximating the solutions of \eqref{eq:swW2Hat} that are capable of capturing these equilibrium solutions are said to satisfy the {\it well balance} property.

The authors of \cite{KurganovLevy2002} and \cite{KP2007}, where shallow-water models for flows along channels with constant width is discussed, propose to reconstruct the variables $w_1$ and $w_2$, which remain constant for steady states of rest, and not the heights $h_1$ and $h_2$, that, despite the water layers remaining flat, may vary across cells due to changes in the bottom topography. While our model \eqref{eq:swW2Hat} describes the evolution of wet areas, not heights, $Q_{1,j\mp \half}^\pm, Q_{2,j\mp \half}^\pm$, $w_{1,j\mp \half}^\pm, w_{2,j\mp \half}^\pm$ still characterize the steady states of rest, for which they are either zero or constant (as is $\widehat w_2 = w_1 + r h_2$). The positivity preserving minmod reconstruction of point values proposed in \S \ref{sec:reconstruction} above will recognize and preserve the constant values of these variables and will recover the point values $A_{1,j \mp \half}^\pm$ and $A_{2, j \mp \half}^\pm$ in \eqref{eq:A_interface} in a manner consistent with those steady states, allowing us to achieve well balance.

The rest of the variables satisfy $(\cdot)_{j\mp \half}^+ = (\cdot)_{j\mp \half}^-$ in the case of a steady state at rest. In order to obtain such discretization of the source terms, we begin by writing the numerical flux for $Q_1$ under these rest conditions,
\begin{dmath} 
\label{eq:fluxQ1}
- \frac{1}{\Delta x} \left[ H_{j + \half}^{Q_1} - H_{j - \half}^{Q_1}\right] = - \frac{1}{\Delta x} \left( g \widehat w_{2,j}  ( A_{1,j + \half}^- - A_{1, j - \half}^+ ) \right),
\end{dmath}
and seek a discretization of the cell average of the corresponding source term that matches this numerical flux $S^{Q_1} (x, {\vect W}, {\vect W}_x)  = g \widehat w_2 \partial_x A_1$. A consistent discretization is given in the following proposition. 
\begin{proposition}
\label{prop:SourceDisc}
Let us consider the following discretization of the source terms in \eqref{eq:FS}
\begin{align}\label{eq:SDisc}
\ & \hspace{-0.1in} \frac{g}{\Delta x} \int_{x_{j - \half}^+}^{x_{j + \half}^-} \widehat w_2 \partial_x A_1 \, dx \approx g \widehat w_{2,j} \frac{A_{1,j+\half} - A_{1,j-\half}}{\Delta x}, \; \; \;
\frac{g}{\Delta x} \int_{x_{j - \half}^+}^{x_{j + \half}^-}  w_2 \partial_x A_2 \, dx \approx g  \bar w_{2,j} \frac{A_{2,j+\half} - A_{2,j-\half}}{\Delta x}, \no \\[0.15in]
\ & \hspace{-0.1in} \frac{g}{\Delta x} \int_{x_{j - \half}^+}^{x_{j + \half}^-} S_e u_1 \, dx = \frac{\bar A_{1,j}}{\sigma_{1,j}} \; V_e \; \bar u_{1,j} , \; \; \;
\frac{g}{\Delta x} \int_{x_{j - \half}^+}^{x_{j + \half}^-} S_e \, dx = \frac{\bar A_{1,j}}{\sigma_{1,j}} \; V_e , \;  \;
\frac{g}{\Delta x} \int_{x_{j - \half}^+}^{x_{j + \half}^-} S_e u_2 \, dx = \frac{\bar A_{1,j}}{\sigma_{1,j}} \; V_e  \; \bar u_{2,j}, \no \\[-0.05in] 
\ & \ \\[-0.1in] 
\ & \hspace{-0.1in} \frac{g}{\Delta x} \int_{x_{j - \half}^+}^{x_{j + \half}^-} S_{f,1} \, dx = - rg\frac{n_i^2 \left| \frac{\bar Q_{1,j} \bar A_{1,j}+\bar Q_{2,j} \bar A_{2,j}}{\bar A_{1,j}+\bar A_{2,j}} \right| }{\bar R_j^{4/3}} (\bar u_{1,j}-\bar u_{2,j}) -  g\frac{n_b^2  \left| \frac{\bar Q_{1,j} \bar A_{1,j}+\bar Q_{2,j} \bar A_{2,j}}{\bar A_{1,j}+\bar A_{2,j}} \right|}{\bar R_j^{4/3}} \bar u_{1,j}, \no \\[0.15in]
\ & \hspace{-0.1in} \frac{g}{\Delta x} \int_{x_{j - \half}^+}^{x_{j + \half}^-} S_{f,2} \, dx = - g\frac{n_i^2  \left| \frac{\bar Q_{1,j} \bar A_{1,j}+\bar Q_{2,j} \bar A_{2,j}}{\bar A_{1,j}+\bar A_{2,j}} \right|}{\bar R_j^{4/3}} (\bar u_{2,j}-\bar u_{1,j}). \no
\end{align}
where
\[
A_{k,j \pm \half} = \frac{a_{j\pm \half}^+ A_{k,j \pm \half}^- - a_{j\pm \half}^- A_{k,j \pm \half}^+}{a_{j\pm \half}^+ - a_{j\pm \half}^-}, k =1,2, \; 
\text{ and }
\bar R_j = \frac{\bar A_{1,j}+\bar A_{2,j}}{\sigma_{B_j}+\int_{B_j}^{\bar w_{2,j}} \sqrt{4+(\partial_z \sigma_j(z))^2} dz}.
\]
Then the numerical scheme given by equation \eqref{eq:ODESW} satisfies the well-balance property. That is, it recognizes steady states of rest.
\end{proposition}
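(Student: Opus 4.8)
The plan is to verify directly that the scheme \eqref{eq:ODESW} with the source discretization \eqref{eq:SDisc} annihilates the time derivative $\frac{d}{dt}\bv_j$ whenever the data is a discrete steady state of rest, i.e. whenever $\bar Q_{1,j}=\bar Q_{2,j}=0$ for all $j$, and $\bar w_{1,j}\equiv \text{Const}$, $\bar h_{2,j}\equiv \text{Const}$ (hence $\bar w_{2,j}\equiv\text{Const}$ and $\widehat w_{2,j}=\bar w_{1,j}+r\bar h_{2,j}\equiv\text{Const}$) across all cells. First I would record that under these hypotheses the positivity-preserving minmod reconstruction of \S\ref{sec:reconstruction} produces vanishing limited slopes for $w_1$, $w_2$, and $Q_1$, $Q_2$, so that the interface point values satisfy $w_{1,j\pm\half}^\pm = \bar w_{1,j}$, $w_{2,j\pm\half}^\pm = \bar w_{2,j}$ and $Q_{k,j\pm\half}^\pm = 0$, and in particular $u_{k,j\pm\half}^\pm = 0$ after the regularization \eqref{eq:regularized_u}; moreover the one-sided point values of $w_1$, $w_2$ agree across every interface ($w_{k,j+\half}^- = w_{k,j+\half}^+$), while the point values $A_{k,j\pm\half}^\pm$ recovered via \eqref{eq:A_interface} depend only on $B_{j\pm\half}$ and the interface width $\widetilde\sigma_{j\pm\half}$, so they too agree across each interface: $A_{k,j+\half}^- = A_{k,j+\half}^+ =: A_{k,j+\half}$. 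This last point is exactly why we reconstruct $w_1,w_2$ rather than $h_1,h_2$.

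The second step is to evaluate the numerical flux \eqref{eq:NumFluxH} at rest. Since $Q_{k}^\pm = 0$ and $u_k^\pm = 0$, the flux function $\vect F$ at an interface reduces to $(0,\,g\widehat w_{2}A_1,\,0,\,g w_2 A_2)^T$ evaluated at the common point values, and because $\vect W_{j+\half}^+ = \vect W_{j+\half}^-$ at rest (all four components agree, as noted above) the dissipation term $\frac{a^+a^-}{a^+-a^-}(\vect W^+-\vect W^-)$ vanishes identically and the convex combination collapses to $\vect H_{j+\half} = \vect F(\vect W_{j+\half})$. Thus the $A_1$- and $A_2$-components of $-\frac{1}{\Delta x}(\vect H_{j+\half}-\vect H_{j-\half})$ vanish (mass fluxes are zero), and $S_e = \frac{\bar A_{1,j}}{\sigma_{1,j}}V_e = 0$ since $V_e = 0$ is part of the well-balance regime (steady state of rest, no entrainment), so the two continuity equations give $\frac{d}{dt}\bar A_{k,j} = 0$. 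For the $Q_1$-component we get $-\frac{1}{\Delta x}(H_{j+\half}^{Q_1}-H_{j-\half}^{Q_1}) = -\frac{g}{\Delta x}\widehat w_{2,j}(A_{1,j+\half}-A_{1,j-\half})$ — this is where the constancy of $\widehat w_2$ lets us pull $\widehat w_{2,j}$ out and identify $\widehat w_{2,j+\half}=\widehat w_{2,j-\half}=\widehat w_{2,j}$ — while the source discretization \eqref{eq:SDisc} contributes exactly $g\widehat w_{2,j}\frac{A_{1,j+\half}-A_{1,j-\half}}{\Delta x}$ (the friction and entrainment pieces all vanish because $\bar u_{k,j}=0$ and $V_e=0$). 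These cancel, so $\frac{d}{dt}\bar Q_{1,j} = 0$; the identical argument with $w_2$ in place of $\widehat w_2$ gives $\frac{d}{dt}\bar Q_{2,j}=0$.

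The one subtlety worth spelling out, and the step I expect to be the main obstacle, is the consistency of the two definitions of the interface area $A_{k,j+\half}$ appearing in the statement: in the flux term it arises implicitly through $\vect H_{j+\half}$, while in the source term it is defined by the convex combination $A_{k,j+\half} = \frac{a^+A_{k}^- - a^-A_{k}^+}{a^+-a^-}$. At a rest state $A_k^+ = A_k^-$ makes this combination collapse to the common value, so the two agree trivially; the point of the proposition's explicit formula is that this is the correct choice to make the flux-source balance hold, and I would note that since $w_{1}^\pm,w_{2}^\pm,Q_k^\pm,u_k^\pm$ are all already shown equal across interfaces at rest, the matching is automatic and no further computation is needed. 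I would close by remarking that this discretization is also consistent with the PDE source term $g\widehat w_2\partial_x A_1$ away from rest — the divided difference $\frac{A_{1,j+\half}-A_{1,j-\half}}{\Delta x}$ is a second-order approximation of $\partial_x A_1$ and $\widehat w_{2,j}$ of $\widehat w_2$ — so no accuracy is sacrificed, which completes the proof.
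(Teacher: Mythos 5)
Your proposal is correct and follows essentially the same route as the paper, which does not give a formal proof but embeds the identical argument in the surrounding text: at a rest state the reconstruction of $w_1$, $w_2$, $Q_1$, $Q_2$ yields coinciding interface values (so the dissipation part of \eqref{eq:NumFluxH} vanishes and the convex-combination definition of $A_{k,j\pm\half}$ collapses to the common point value), the momentum flux difference reduces to \eqref{eq:fluxQ1}, and the source discretization \eqref{eq:SDisc} is built precisely to cancel it, with friction and entrainment terms vanishing since $\bar u_{k,j}=0$ (and $V_e=0$ or $V_e\propto u_1$). Your filling-in of the details — in particular why $A_{k,j+\half}^-=A_{k,j+\half}^+$ follows from reconstructing $w_1,w_2$ rather than $A_1,A_2$, and the constancy of $\widehat w_2=(1-r)w_1+rw_2$ — matches the paper's intent exactly.
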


The averages in the computations $A_{k,j \pm \half}$ are done consistently with the discretization of the numerical fluxes in equation \eqref{eq:NumFluxH}, which is relevant for the cases where $w_2, \hat w_2$ are constant.  

\subsection{Evolution}
Once the interface values, the numerical fluxes and the average of the source terms have been calculated, the ODE system \eqref{eq:ODESW} is integrated in time using the second order Strong Stability Preserving Runge-Kutta scheme \cite{GottliebShuTadmor2001},
\begin{subequations} \label{eq:RKEvolution}
\begin{align}
\vect W^{(1)} =  & \overline{\vect W}(t)+\Delta t \, \vect C[\overline{\vect W}(t)] \label{eq:euler} \\[0.1in] 
\vect W^{(2)} = & \ \half \overline{\vect W}(t) + \half \left(\vect W^{(1)} + \Delta t \, \vect C[\vect W^{(1)}] \right) \label{eq:RK2ndstage} \\[0.1in] 
\overline{\vect W}(t+\Delta t) := & \ \vect W^{(2)},
\end{align}
\end{subequations}
with the Runge-Kutta fluxes
\begin{equation}
\label{eq:NumFlux}
\vect C[\vect W(t)]_j = -\frac{\vect H_{j+\frac{1}{2}}(v(t))-\vect H_{j-\frac{1}{2}}(v(t))}{\Delta x} + \overline{\vect S}_j(t),
\end{equation}
with $\overline{\vect S}_j(t)$ described by equation \eqref{eq:FS}, and discretized by equation \eqref{eq:SDisc}. The time step $\Delta t$ is determined so as to satisfy the {\it CFL} restriction
\beq\label{eq:cfl}
 \nu = \Delta t \max \left[ \frac{a}{\Delta x} \max_{k=1,2, j=1:N}\frac{A_{k,j+\half}^-+A_{k,j-\half}^+}{2\overline{A}_{k,j}} + \tau_e , 5 \tau_f \right] \le \half,
\eeq
where $k=1,2$ denote the corresponding quantities for the internal and external layers respectively. Here, $\tau_e$ is given by 
\begin{equation}
\label{eq:tau_e}
\tau_e = \min\left( 0, \min_{j=1:N} \frac{\bar V_{e,j}}{\sigma_{1,j}} , \min_{j=1:N} \frac{\bar A_{1,j}}{\bar A_{2,j}}\frac{\bar V_{e,j}}{\sigma_{1,j}} \right),
\end{equation}
and
\begin{equation}
\label{eq:tau_friction}
\tau_f = r\; g \; \max(n_i,n_b)^2 \max_{j=1:N} \frac{\left| \frac{\bar Q_{1,j} \bar A_{1,j}+\bar Q_{2,j} \bar A_{2,j}}{(\bar A_{1,j}+\bar A_{2,j})^2} \right| }{\bar R_j^{4/3}}.
\end{equation}
We note that in channels with straight walls, $\frac{A_{k,j+\half}^-+A_{k,j-\half}^+}{2\bar A_{k,j}} =1$ by construction. In the case of general channels, that quantity is only approximately 1, so the extra condition is not too restrictive. Furthermore, $\tau_e$ is non-positive and such extra restriction is due to entrainment. We also note that we are considering entrainment only when the external layer $A_2$ is away from zero, and the above definition is valid. The quantity in \eqref{eq:tau_friction} is a frequency scale given by the friction term. The condition in \eqref{eq:cfl} guarantees that the friction is resolved in the sense that we include at least 10 time steps during each time period of length $1/\tau_f$. 

\vskip 6pt

One can show that the CFL restriction in equation \eqref{eq:cfl} also guarantees the positivity-preserving property. That is, the positivity of $A_1$ and $A_2$ are preserved in the numerical evolution. The details are given in the following proposition.

\begin{proposition} 
\label{prop:positivity}
Consider the scheme \eqref{eq:ODESW} -- \eqref{eq:NumFluxH} with the reconstruction algorithm described in \S \ref{sec:reconstruction} and the discretization of the source term given by proposition \eqref{prop:SourceDisc}.  If the cell averages $\overline{A}_k(t),k=1,2$ are such that
\bdm
\overline{w}_{1,j}(t)\ge \frac{B_{j-\frac{1}{2}}+B_{j+\frac{1}{2}}}{2}, \hspace{0.4in} \overline{w}_{2,j} (t) \ge \overline{w}_{1,j} \ \ \forall j,
\edm
then the cell averages $\bar A(t+\Delta t)$ as evolved with time evolution given by equation \eqref{eq:RKEvolution}, under the {\it CFL} limitation \eqref{eq:cfl}, and with one-sided local speeds given by equation \eqref{eq:SidedVel}  yields
\bdm
\overline{A}_{k,j}(t+\Delta t) \ge 0 \ \ \ \forall j, \ k=1,2.
\edm
\end{proposition}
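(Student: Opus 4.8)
The plan is to reduce the positivity of the evolved cell averages for $A_1$ and $A_2$ to the scalar positivity argument of Kurganov--Petrova, applied to the elevation-type variables, and then to handle the extra source-term contributions (entrainment) separately. Recall that the update for $\overline{A}_{k,j}$ only involves the first and third components of the Runge--Kutta flux $\vect C$, and since the second-stage update \eqref{eq:RK2ndstage} is a convex combination of forward-Euler steps, it suffices by convexity to show that a single forward-Euler step \eqref{eq:euler} preserves positivity under the CFL restriction; the SSP property of the Runge--Kutta scheme then gives the full result. So I would fix $k$ and write out
\[
\overline{A}_{k,j}(t+\Delta t) = \overline{A}_{k,j}(t) - \frac{\Delta t}{\Delta x}\bigl( H^{A_k}_{j+\half} - H^{A_k}_{j-\half}\bigr) + \Delta t\, \overline{S}^{A_k}_j,
\]
where $H^{A_k}$ is the $A_k$-component of the numerical flux \eqref{eq:NumFluxH} with $\vect F^{A_k} = Q_k$, and $\overline{S}^{A_k}_j = \pm \frac{\overline A_{1,j}}{\sigma_{1,j}} V_e$ (with $+$ for $k=1$, $-r$ times for $k=2$) from \eqref{eq:SDisc}.

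Next I would substitute the reconstructed point values. Writing $Q_{k,j\pm\half}^\pm = A_{k,j\pm\half}^\pm u_{k,j\pm\half}^\pm$ and expanding $H^{A_k}_{j\pm\half}$ using \eqref{eq:NumFluxH}, the Euler update becomes a linear combination of the four interface point values $A_{k,j-\half}^+$, $A_{k,j+\half}^-$ (from cell $j$) and $A_{k,j-\half}^-$, $A_{k,j+\half}^+$ (from the neighbors). The key algebraic step is to show this can be organized as
\[
\overline{A}_{k,j}(t+\Delta t) = \Bigl(1 - \tfrac{\Delta t}{\Delta x}\theta_j\Bigr)\frac{A_{k,j+\half}^- + A_{k,j-\half}^+}{2} + (\text{nonnegative terms involving the neighbors}) + \Delta t\,\overline S^{A_k}_j,
\]
where I have used the identity $\overline A_{k,j} = \tfrac12\bigl(A_{k,j+\half}^- + A_{k,j-\half}^+\bigr)$ valid for the minmod reconstruction when no positivity correction is triggered (and is still consistent after the corrections, since those only raise the interface values of $w_1,w_2$ and hence of $A_1,A_2$, keeping the average representable as an appropriate convex combination -- this point I would state carefully). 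The coefficient $\theta_j$ collects the terms $a^\pm_{j\pm\half}$ and the regularized velocities $u^\pm_{k,j\pm\half}$; here I would invoke the structural inequalities noted right after \eqref{eq:SidedVel}, namely $a^+_{j\pm\half}\ge 0 \ge a^-_{j\pm\half}$, $a^+_{j\pm\half}\ge \max(u^+,u^-)$, $a^-_{j\pm\half}\le\min(u^+,u^-)$, and $a^+_{j\pm\half} > a^-_{j\pm\half}$, to conclude that all the ``neighbor'' coefficients are nonnegative and that $\theta_j \le a\,\dfrac{A_{k,j+\half}^- + A_{k,j-\half}^+}{2\overline A_{k,j}}$ with $a = \max_j(a^+_{j+\half}, -a^-_{j-\half})$ or similar, matching the first term in the max in \eqref{eq:cfl}.

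Finally I would absorb the entrainment source. For $k=1$ the source term $\Delta t\,\overline A_{1,j}V_e/\sigma_{1,j}$ can be negative (when $V_e<0$, i.e.\ detrainment), so I would combine it with the $\bigl(1-\tfrac{\Delta t}{\Delta x}\theta_j\bigr)$-weighted term; writing $\overline A_{1,j} = \tfrac12(A_{1,j+\half}^- + A_{1,j-\half}^+)$, the two together give a coefficient $1 - \tfrac{\Delta t}{\Delta x}\theta_j + \Delta t\,V_e/\sigma_{1,j}$ times $\tfrac12(A_{1,j+\half}^- + A_{1,j-\half}^+)$, which is nonnegative precisely because the term $\tau_e$ in \eqref{eq:tau_e} (specifically $\min_j V_{e,j}/\sigma_{1,j}$) is included additively inside the max in the CFL condition \eqref{eq:cfl}. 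For $k=2$ the analogous manipulation uses the second entry $\tfrac{\overline A_{1,j}}{\overline A_{2,j}}\tfrac{V_{e,j}}{\sigma_{1,j}}$ of $\tau_e$, after bounding $-r\overline A_{1,j} V_e/\sigma_{1,j} \ge -\overline A_{2,j}\cdot\bigl(\tfrac{\overline A_{1,j}}{\overline A_{2,j}}\tfrac{|V_e|}{\sigma_{1,j}}\bigr)$ and $r\le 1$. The friction terms do not enter the $A_k$ equations, so they play no role here (the $5\tau_f$ branch of the CFL is for accuracy/stability of the momentum equations, not positivity). I expect the main obstacle to be the bookkeeping in the second step: verifying that after the positivity-enforcing corrections of \S\ref{sec:reconstruction} the Euler update is still a genuine convex-type combination of nonnegative quantities — one must check that replacing $\overline w_{1,j}$- and $\overline w_{2,j}$-consistent values by the corrected (larger) interface elevations only increases the recovered interface areas $A^{\pm}_{k,j\mp\half}$ in \eqref{eq:A_interface} and does not spoil the identity $\overline A_{k,j} = \tfrac12(A^-_{k,j+\half} + A^+_{k,j-\half})$ in a way that breaks the sign of $\theta_j$; this is where the argument departs most from the constant-width case in \cite{KP2007} and requires the monotonicity of $z\mapsto\int_{B_j}^{z}\widetilde\sigma_j$.
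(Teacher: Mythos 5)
Your overall strategy is the one the paper itself intends: its ``proof'' is a one-sentence deferral to the one-layer argument of \cite{balbas2014positivity} applied to each layer, plus the entrainment contribution, and that argument is exactly your reduction to a forward-Euler step by SSP convexity, the expansion of the $A_k$-component of \eqref{eq:NumFluxH} with $Q^{\pm}=A^{\pm}u^{\pm}$, and the sign conditions $a^{-}_{j\pm\half}\le\min(u^{+},u^{-})\le\max(u^{+},u^{-})\le a^{+}_{j\pm\half}$ from \eqref{eq:SidedVel}. However, one step of your write-up is wrong as stated: the identity $\overline A_{k,j}=\tfrac12\bigl(A^{-}_{k,j+\half}+A^{+}_{k,j-\half}\bigr)$ does \emph{not} hold for channels with arbitrary cross-sections. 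The minmod reconstruction is applied to $w_1,w_2$ (and $Q_k$), and the interface areas are recovered in \eqref{eq:A_interface} by integrating the \emph{interface} widths $\widetilde\sigma_{j\mp\half}$, which differ from the cell-centered $\widetilde\sigma_j$; the elevation-to-area map is moreover nonlinear. The paper states explicitly that the ratio $\tfrac{A^{-}_{k,j+\half}+A^{+}_{k,j-\half}}{2\overline A_{k,j}}$ equals $1$ only for straight walls, and its appearance in \eqref{eq:cfl} exists precisely because your identity fails; so your displayed ``convex combination'' is not the Euler update. The repair is to drop the identity: the outgoing coefficients are $\tfrac{a^{+}(u^{-}-a^{-})}{a^{+}-a^{-}}\big|_{j+\half}\le a^{+}_{j+\half}$ and $\tfrac{-a^{-}(a^{+}-u^{+})}{a^{+}-a^{-}}\big|_{j-\half}\le -a^{-}_{j-\half}$, the incoming terms are nonnegative, and hence $\overline A_{k,j}(t+\Delta t)\ge \overline A_{k,j}\bigl(1-\tfrac{2\Delta t\,a}{\Delta x}\,\tfrac{A^{-}_{k,j+\half}+A^{+}_{k,j-\half}}{2\overline A_{k,j}}\bigr)+\Delta t\,\overline S^{A_k}_j\ge \Delta t\,\overline S^{A_k}_j$ under $\nu\le\half$. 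Relatedly, your concern about the positivity corrections of \S\ref{sec:reconstruction} is misplaced: they do not ``only raise'' interface elevations (one endpoint is raised and the other adjusted so the cell average of $w$ is preserved), and nothing about averages of $A$ is needed from them --- only that $h^{\pm}_{k,j\mp\half}>0$, hence all interface areas are nonnegative.

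The second weak point is the entrainment step. You claim the combined coefficient is nonnegative ``precisely because'' $\tau_e$ appears in \eqref{eq:cfl}, but with \eqref{eq:tau_e} as written $\tau_e\le 0$ enters \emph{additively}, which loosens rather than tightens the time-step restriction, whereas a negative source (detrainment in layer 1, or the sink $-rS_e$ acting on layer 2 when $V_e>0$, a case in which $\tau_e=0$ and the CFL is not tightened at all) is exactly when a tighter restriction is needed. So the absorption does not follow from \eqref{eq:cfl} taken literally; closing it requires either reading the entrainment contribution in the CFL with the opposite sign (as a penalty on $\Delta t$) or invoking the paper's standing assumption that entrainment is activated only when $A_2$ is bounded away from zero. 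Since the paper gives no details here beyond ``adding the contribution of entrainment,'' you should verify this sign bookkeeping explicitly rather than assert it. The friction remark is fine: $S_{f,1},S_{f,2}$ do not enter the $A_k$ equations, and the $5\tau_f$ branch of \eqref{eq:cfl} only shrinks $\Delta t$, which cannot harm positivity.
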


The proof can be done for each layer as in \cite{balbas2014positivity}, and by adding the contribution of entrainment. It will not be repeated here.

\section{Numerical results} \label{sec:Results}

\noindent
{\bf Numerical Setup and Boundary Conditions}\\

In this section we present numerical solutions for a range of problems illustrating the properties of our central-upwind scheme in a variety of flow situations. For all the results presented below the value of the acceleration of gravity is taken as $g = 9.81$, and the time step, $\Delta t$, satisfies the $CFL$ restriction \eqref{eq:cfl}. Unless otherwise mentioned, the computations below were performed using 200 grid cells and $\nu = 0.45$. In all numerical tests, $x,z,\sigma$ are in units of $\text{ m}$, $A_1,A_2$ in $\text{m}^2$, $Q_1,Q_2$ in $\text{ m}^3 \text{s}^{-1}$, $u_1,u_2,V_e$ in $\text{m}\text{s}^{-1}$, $g$ in $\text{m}^2 \text{ s}^{-1}$, and $n_i,n_b$ in $\text{ s m}^{-1/3}$.

At the left (right) boundary, outflow occurs when $\gamma_1^- < 0, \; \gamma_2^- < 0$ ($\gamma_1^+ > 0, \; \gamma_2^+ > 0$) respectively. Inflow is considered to occur when such conditions are not satisfied. Unless otherwise noted, we extrapolate the variables $w_1,w_2,Q_1,Q_2$ at outflow, and prescribe $w_1,w_2,Q_1,Q_2$  at inflow (with the values given by the initial conditions).

\subsection{Riemann Problem}

\begin{figure}[h!]
\center{
\includegraphics[scale=0.49]{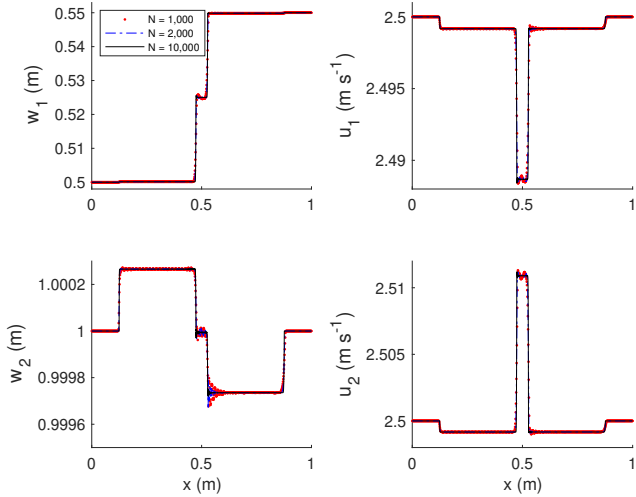}}
\caption{\label{fig:RiemannProblem} Solution to the Riemann problem at $t=0.12 $. The initial conditions are given by equation \eqref{eq:RiemannProblem}. Top left: internal layer $w_1$. Bottom left: external layer $w_2$. Top right: internal velocity $u_1$. Bottom right: external velocity $u_2$. In each panel, the reference solution is computed with a fine resolution using $10,000$ gridpoints (black solid line). The numerical approximations using $1000$ (red solid line) and $2000$ (blue dashed line) grid points are also shown.} 
\end{figure}

System \eqref{eq:swW2Hat} consists of balance laws with non-conservative products. This adds both theoretical and numerical challenges such as in calculating physically relevant weak solutions. Non-conservative products may change the jump conditions in weak solutions. In \cite{DalMasoetal1995}, a definition of weak solutions based on the theory of non-conservative products is provided and more on the theory of paths can be found in \cite{LeFloch2002,LeFloch2004}. 

In this example we test the convergence and non oscillatory properties of the proposed scheme. We consider a Riemann problem consisting of a two-layer flow along a channel with constant width, $\sigma(x,z) = 1$, and a flat bottom $B(x) = 0$. Friction and entrainment are ignored so that $n_i = n_b = 0$, and $V_e =0$, and the ratio of densities is $r = 0.98$. The initial conditions over the domain $x \in [0,1]$ consist of two constant states separated by a membrane at $x = 0.2$:

\begin{equation}
\label{eq:RiemannProblem}
(A_1,Q_1,A_2,Q_2)(x,0) = 
\left\{
\begin{array}{lr}
(0.5 ,1.25 ,0.5,1.25) & \text{ if } \ \ x \le 0.2 ,\\[0.07in]
(0.55,1.375,0.45,1.125) & \text{otherwise}.
\end{array}
\right.
\end{equation}
We impose the boundary conditions as specified at the beginning of Section \ref{sec:Results}. The two-layers move originally at the same speed, $u_1(x,0) = u_2(x,0) = 2.5$ in both sides of the membrane, and the only difference is a jump on the location of the interface between the two-layers at $t=0$ ($h_1$ increases and $h_2$ decreases by $0.05$ while the flow remains flat at the top).

Although the geometry of the channel is trivial and does not pose any challenges for the numerical scheme, when the membrane is removed, the initial conditions develop strong shock discontinuities. If solved with {\it non non-oscillatory} numerical schemes or over coarse grids, these discontinuities will lead to the onset of spurious oscillations. The numerical results shown in Figure \ref{fig:RiemannProblem} demonstrate the robustness of our scheme and shows the convergence to the discontinuous solution as the grid is refined. The left panels show the internal layer $w_1$ (top left) and the external layer $w_2$ (bottom left). The corresponding velocities are shown in the right panels. In each panel, different resolutions are used. The red dotted line shows the numerical results using $1,000$ gridpoints, while the blue dashed line represents the solution computed with a finer grid of $2,000$ gridpoints. Due to the presence of non-conservative products, a Riemann problem is particularly challenging in two-layer flows. Although one can observe oscillations when a resolution of $1,000$ gridpoints is used, such oscillations are significantly reduced for the reference solution that is obtained with a very fine resolution of 10,000 gridpoints. The structure of the solution can be clearly identified. There are four shockwaves, one of them with negative speed of propagation.

\subsection{Perturbation from a Steady State of Rest in a General Channel with Discontinuous Topography} \label{sec:ss_rest2}

A perturbation to a steady state of rest in a channel with general walls and discontinuous topography is applied in this section. The wall's width defined over the domain $[0,1]$ is given by
\begin{equation} \label{eq:channel}
\sigma(x,z) = 
\left\{
\begin{array}{ccl}
\frac{1}{2} + \half\sqrt{z} \left( 1-\frac{1}{4} \left( 1+\cos\left( \frac{\pi (x - 0.6)}{0.2} \right) \right) \right) & \text{ if } & 0.4 \le x < 0.8 \\[0.07in]
\frac{1}{2} \left( 1+\sqrt{z} \right) & \text{ if } & x \in [0,1] \setminus [0.4,0.8]. 
\end{array}
\right.
\end{equation}
The ratio of densities between layers is $r = 0.98$. Here $V_e = 0$ but friction is included with Manning coefficients $n_i = n_b = 0.009 \text{ s m}^{-1/3} $. Such values were obtained from \cite{khan2014modeling}.  The topography is given by
\[
B(x) = 
\left\{
\begin{array}{ccl}
0 & \text{ if } & x \in [0,0.15],\\
\frac{1}{4} \left( 1+\cos\left( 4 \pi (x - 0.4) \right) \right) & \text{ if } & x \in [0.15,0.4], \\[0.07in]
\frac{1}{4} & \text{ if } & x \in [0.4,1].
\end{array}
\right.
\]

\begin{figure}[h!]
\center{
\includegraphics[scale=0.45]{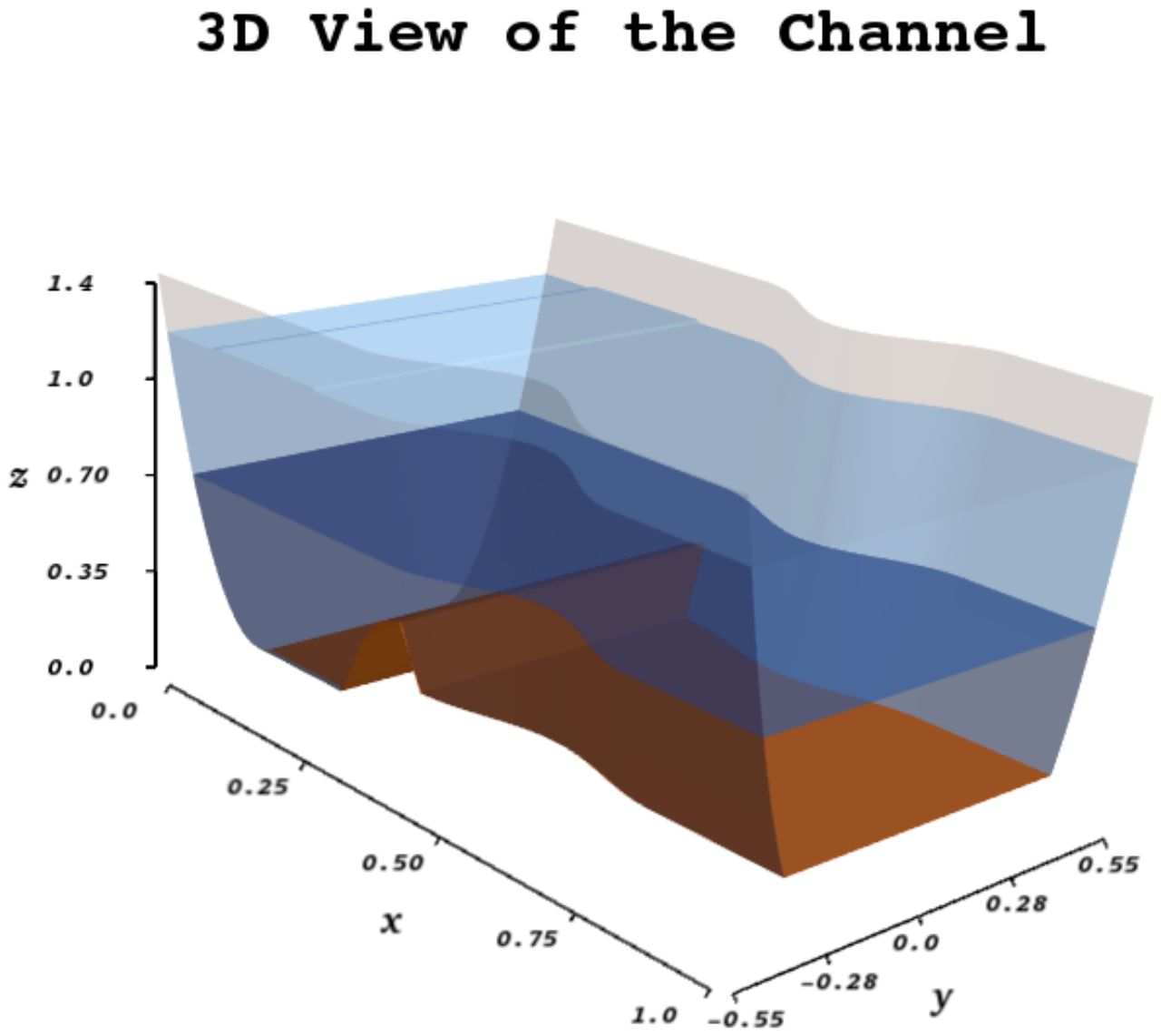}
\includegraphics[scale=0.4]{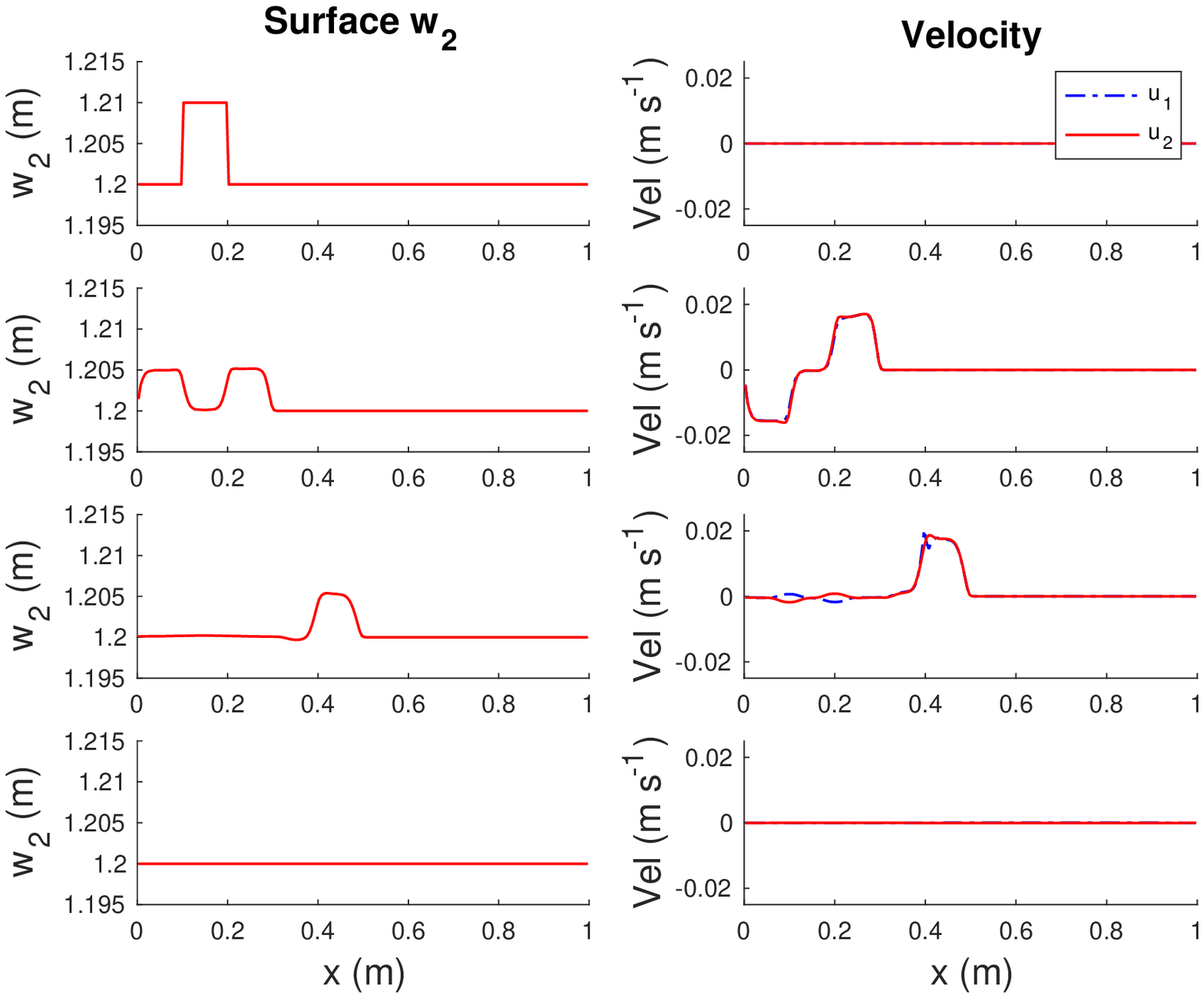}}
\caption{\label{fig:Pert2}Perturbation of steady state at rest. The initial conditions are given by \eqref{eq:rest2}. Left panel: 3D view of the channel at $t=0$. Middel panel: surface layer $w_2 = B+h_1+h_2$ at times $t= 0, 0.03, 0.1, 5$. Right panel: velocities $u_1$ and $u_2$ at times $t= 0, 0.03, 0.1, 5$. } 
\end{figure}

The initial conditions are given by
\beq \label{eq:rest2}
\begin{array}{l}
u_1(x,0) = u_2(x,0) = 0, \ \ \ \ w_1(x,0) =  0.7, \ \ \ \text{and} \ \ \
w_2(x,0) = 
\left\{
\begin{array}{ll}
1.2 + 10^{-2} & \text{ if } 0.1 \le x \le 0.2, \\[0.07in]
1.2 & \text{otherwise}.
\end{array}
\right.
\end{array}
\eeq

The left panel of Figure \ref{fig:Pert2} shows the 3D view of the channel described by equation \eqref{eq:channel} along with the initial conditions given by equation \eqref{eq:rest2}. Here we impose the boundary conditions as specified at the beginning of Section \ref{sec:Results}. The middle panel shows the external layer ($w_2$) at times $t=0,0.03,0.1,5$ in descending order and using a resolution of 200 grid points. The internal layer $w_1$ is not shown in the middle panels to identify the perturbation more clearly. The right panels show the velocity of each layer. Initially those velocities are zero. The densities in both layers are similar. The evolution of the flow is similar to that of a one-layer flow. This can be corroborated in the velocity plots where $u_1$ and $u_2$ are close to each other over time. Both velocities at $t=5$ are less than $0.83 \times 10^{-4}$ in the domain and the difference satisfies $||u_2-u_1||_\infty \le 2.9 \times 10^{-3}$.

\begin{figure}[h!]
\center{
\includegraphics[scale=0.28]{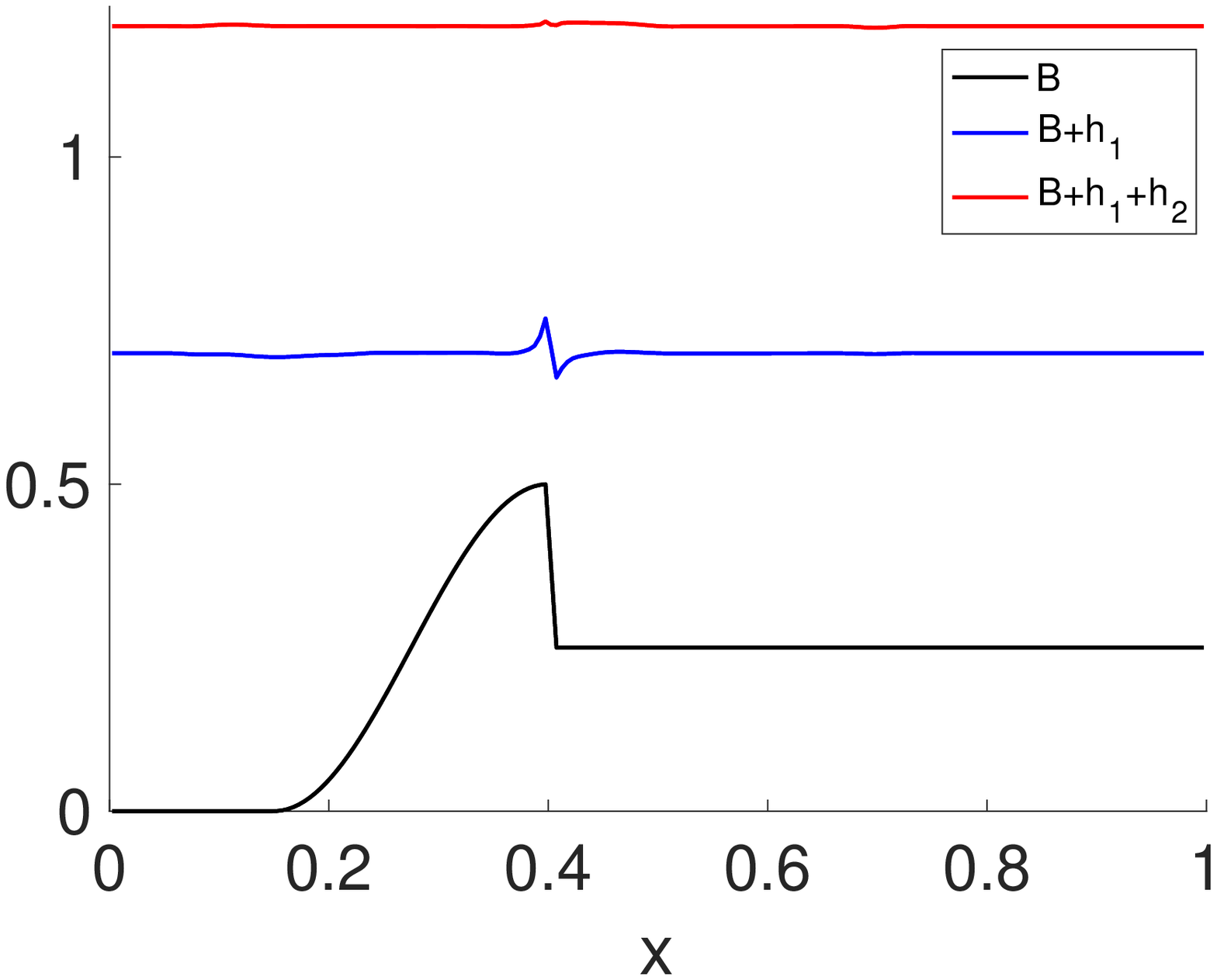}
\includegraphics[scale=0.65]{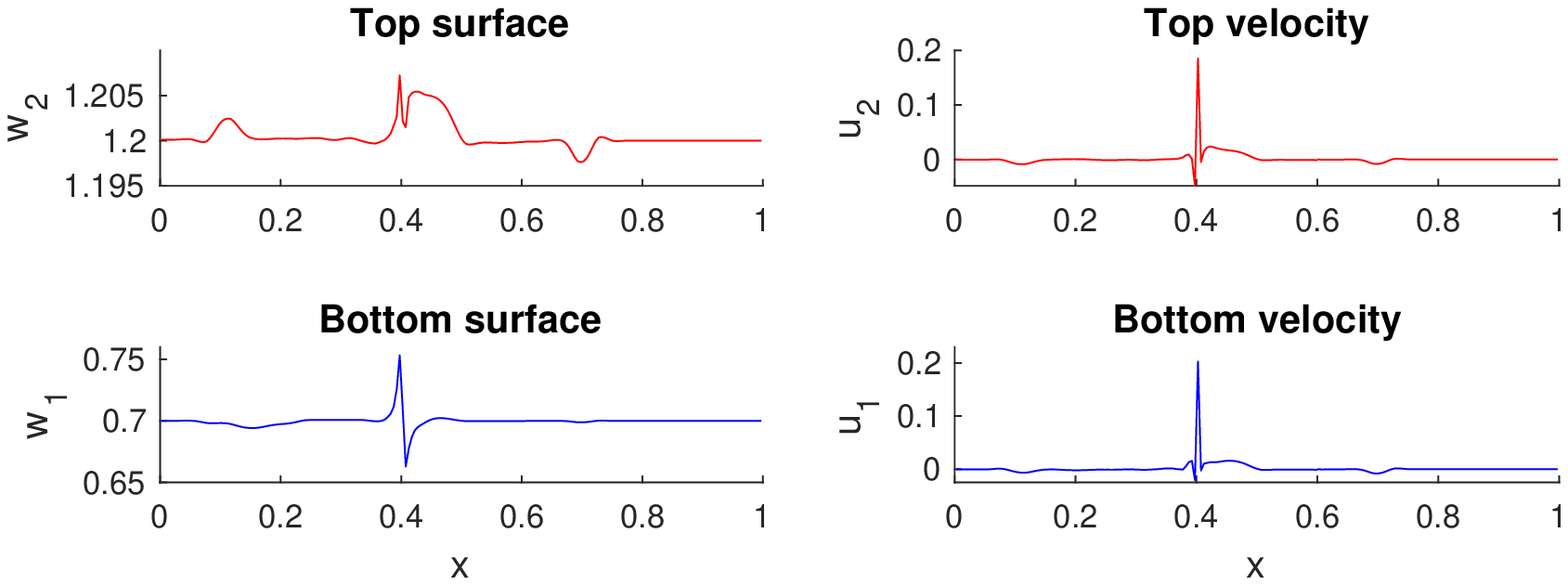}}
\caption{\label{fig:PertNonWB} The numerical results are shown when the numerical scheme does not satisfy the well-balance property. The initial conditions are those given in Figure \ref{fig:Pert2} without the perturbation. The total height $w_2$ is shown in the middle top panel at time $t=0.1$, while $w_1$, $u_2$ and $u_1$ are displayed in the middle bottom, right top and right bottom panels. Left panel: Topography (black solid line),  $w_1$ (blue solid line), and $w_2$ (red solid line) are shown. } 
\end{figure}

The well-balance property is particularly important in two-layer flows. Any unbalance in the external layer affects the internal one and viceversa. Figure \ref{fig:PertNonWB} (without the initial perturbation) shows the numerical results when the numerical scheme does not satisfy the well-balance property. In particular, here we use the same scheme but implement the reconstruction of $A_1$ and $A_2$ directly, instead of the procedure described in Section \ref{sec:WellBalance}. In a channel with width variations like the one considered in this section, the reconstruction results in a top surface $w_2$ that is not flat already in the first time step. The time evolution of $w_2,w_1,u_2$ and $u_1$ are shown in the middle top, middle bottom, right top and right bottom panels respectively. All those quantities must be flat in this steady state at rest but the unbalance in the numerical scheme caused numerical errors, which are significant as one can see in the left panel that shows the topography and the total heights of each layer.

\subsection{Internal Waves}

\begin{figure}[h!]
\center{
\includegraphics[width=0.38\textwidth]{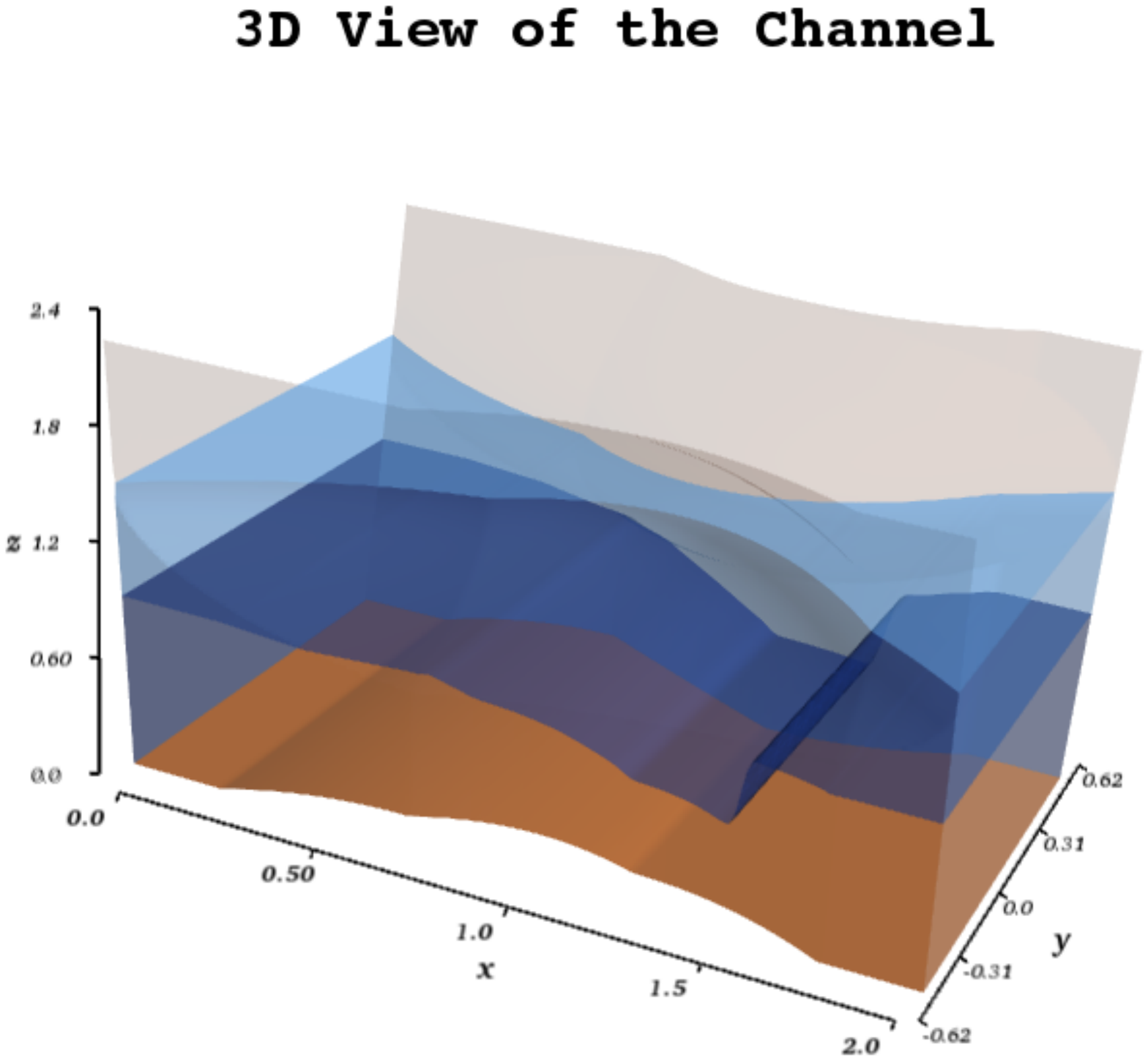}
\includegraphics[width=0.42\textwidth]{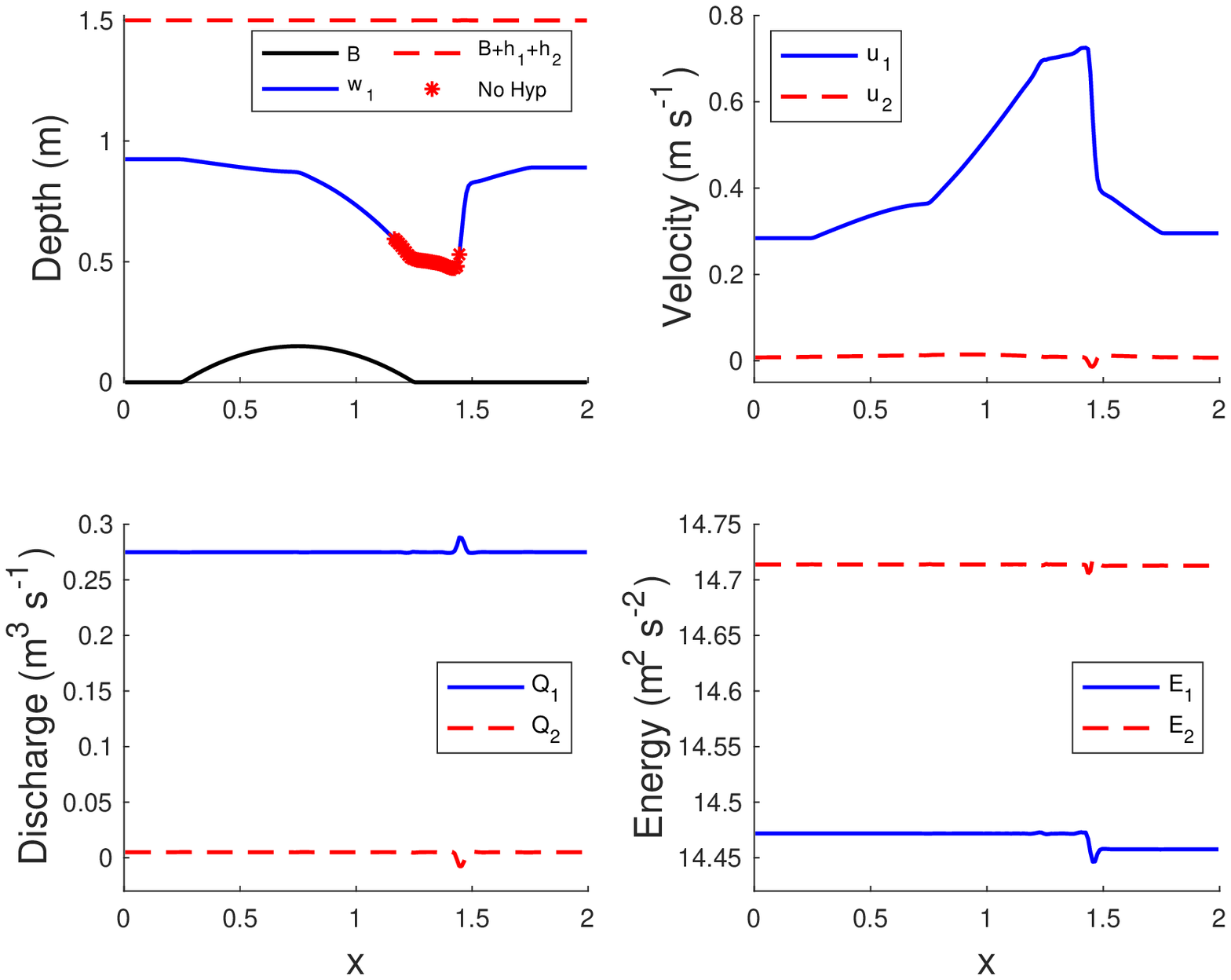}}
\caption{\label{fig:InternalWavesSS} Internal waves. Left panel: 3D views of a steady state where the internal layer is not at rest. Middle top panel: plots of $w_1$ (blue solid line), $w_2$ (red dashed line), and topography (black solid line). The red asterisks show locations where hyperbolicity is lost. The top right panel shows the external (red dashed line) and internal (blue solid line) velocities. The middle bottom and bottom right panels repeat the plots for $Q_1$, $Q_2$ (middle bottom) and $E_1$, $E_2$ (bottom right). } 
\end{figure}

Waves due to gravity may form in the internal layer, generating internal waves in two-layer flows. Internal waves may appear in rivers, and shear flow instabilities may be one of the mechanisms leading to such waves \cite{apel1975observations}. Here we test the ability of the numerical scheme to capture steady states that are not at rest for the internal layer while the free surface is at rest. We consider a channel extended over the interval $x\in [0,2]$ with the topography and channel's width given, respectively, by
\begin{equation}
\label{eq:BPositivity}
B(x) = 0.3 \max\left( \frac{1}{2}-2 \left( x-\frac{3}{4} \right)^2,0\right), 
\end{equation}
and
\begin{equation}
\label{eq:sigmaPositivity}
\sigma(x,z) = 1-\frac{1}{2} \max\left( \frac{1}{2} - 2 \left( x- 1.25 \right)^2,0 \right)+\frac{z}{10}-\frac{3}{2} \max\left( \frac{1}{2}- \frac{1}{2} (x-1)^2-\left( z-\frac{3}{2} \right)^2,0\right).
\end{equation}
For the purpose of this numerical test, we ignore entrainment ($V_e=0$). We also ignore friction ($n_i = n_b = 0$) to verify that the conditions for steady states in Proposition \eqref{prop:SS} are met.

The discretization of the source terms \eqref{eq:SDisc} guarantees the balance between source terms and non linear fluxes for steady states of rest, but that balance is not guaranteed in steady states characterized by the conditions \eqref{eq:InternalWave}, where the equilibrium in the internal layer is achieved through the effects of hydrostatic pressure exerted by the external layer. Perturbations in any of the layers affect each other. The results obtained for this example demonstrate that the proposed numerical schemes computes correctly these non trivial steady states. 

It is also worth noting that in these examples, despite the loss of hyperbolicity in a subregion, the approximations \eqref{eq:EigBounds} that we employ to bound the eigenvalues allow us to capture the steady-state flows and resolve accurately their internal waves. Other steady states with large enough velocity $u_1$, however, may lose hyperbolicity in the entire domain. In such cases, the proposed model is not suitable to describe those type of flows and should not be used at all.

Figure \ref{fig:InternalWavesSS} shows one example of internal waves. The steady state was obtained numerically by running the model for a long time, i.e., by convergence. The boundary conditions in this case are as follows. At inflow, we specify $w_1 = 0.9, u_1 = 0.3, w_2 = 1.5, u_2 = 0$, and those same values are used for the initial conditions. We confirm the solution is a steady state with the plots of $Q_1,Q_2,E_1,E_2$ in the bottom middle and bottom right panels. The discharges $Q_1,Q_2$ and the energy $E_1$ are approximately constant. The energy $E_2$ seems to be converging to a piecewise constant profile. The topography, and the elevations of the internal and external layers are shown in the middle top panel, while the velocity is displayed in the top right panel. We clearly see that $w_2$ is constant, indicating that the external layer is at rest. The 3D view of this flow is shown in the left panel. In the middle top panel we also show with red asterisks the region where hyperbolicity is lost. We notice that hyperbolicity is lost where $u_1$ is highest. This is consistent with the fact that hyperbolicity is lost when $u_1$ and $u_2$ are far from each other. 

\begin{figure}[h!]
\center{
\includegraphics[width=0.42\textwidth]{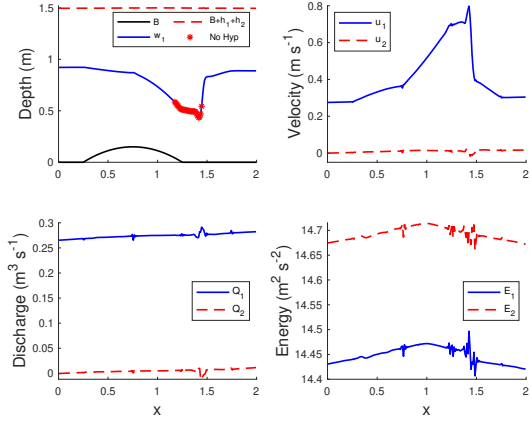}}
\caption{\label{fig:InternalWavePertNonWB} Perturbation evolution of internal wave. As in Figure \ref{fig:InternalWavesSS}, the solution at $t=0.2$ is computed here with the same central-upwind scheme without the well-balance property. Top left panel: topography (black solid line), $w_1$ (blue solid line) and $w_2$ (red dashed line). Top right panel: Velocities $u_1$ (blue solid line) and $u_2$ (red dashed line). The discharge and energy of each layer are shown in the bottom left and bottom right panels respectively.} 
\end{figure}

We now continue the simulation of the steady state reached in Figure \ref{fig:InternalWavesSS} with a reconstruction that does not follow the procedure in sections \ref{sec:reconstruction} and \ref{sec:WellBalance} where $w_1$ and $w_2$ are reconstructed frist. Instead, both $A_1$ and $A_2$ are reconstructed directly with the non-oscillatory procedure in Section \ref{sec:reconstruction}. As a consequence, this version of the numerical scheme does not satisfy the well-balance property. The numerical results are shown in Figure \ref{fig:InternalWavePertNonWB} at time $t=0.2$, where the initial conditions correspond to the steady state reached in Figure \ref{fig:InternalWavesSS}. The two heights are shown in the top left panel, while the velocities are shown in the top right panel. Here we expect the solution to be near the original steady state as in Figure \ref{fig:InternalWavesSS} but observe large numerical errors instead. For instance, the discharge (bottom left) and energy  (bottom right) show large oscillations and are not converging to constant or piece-wise constant values anymore. The well-balance property is particularly important in two-layer shallow-water flows. 

\begin{figure}[h!]
\center{
\includegraphics[width=0.35\textwidth]{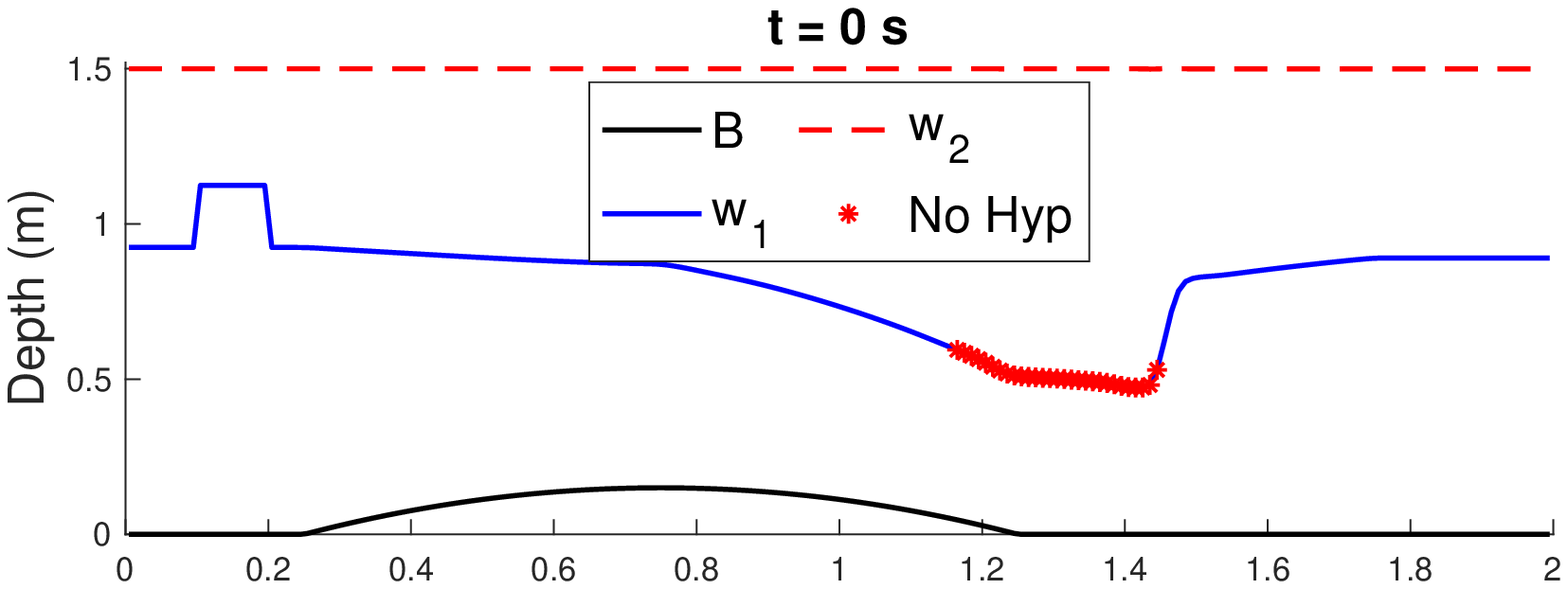}
\includegraphics[width=0.35\textwidth]{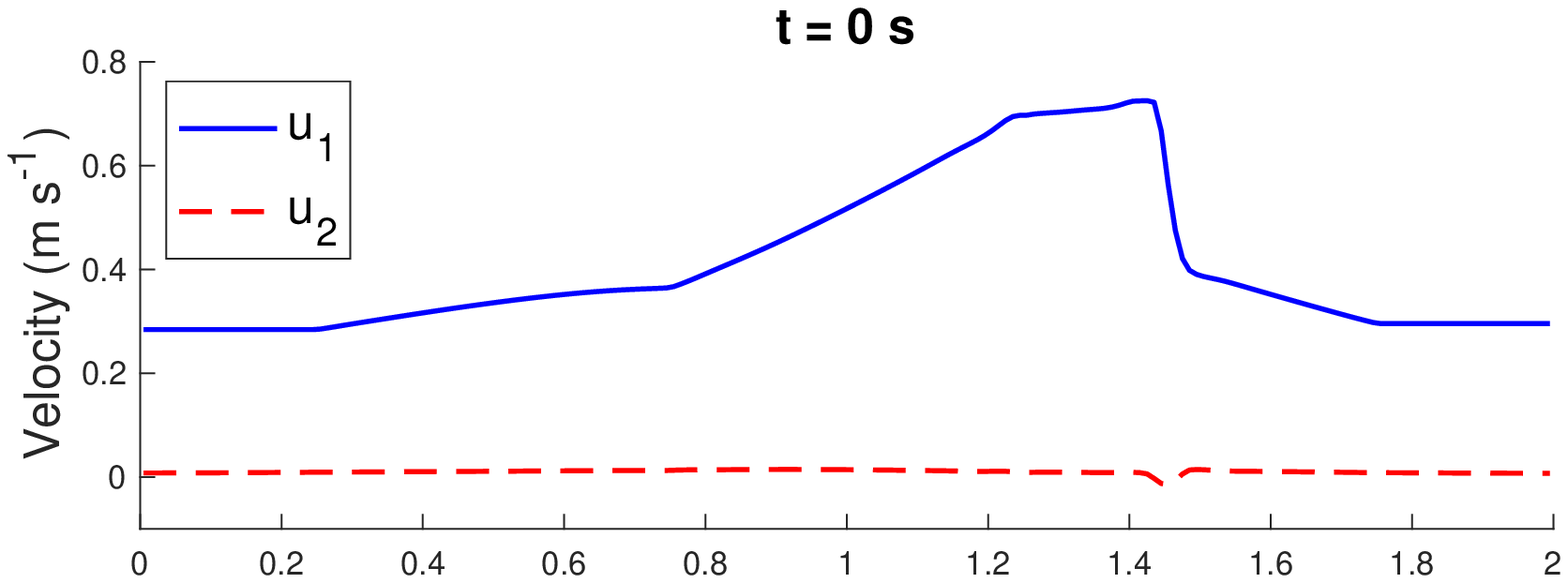}

\includegraphics[width=0.35\textwidth]{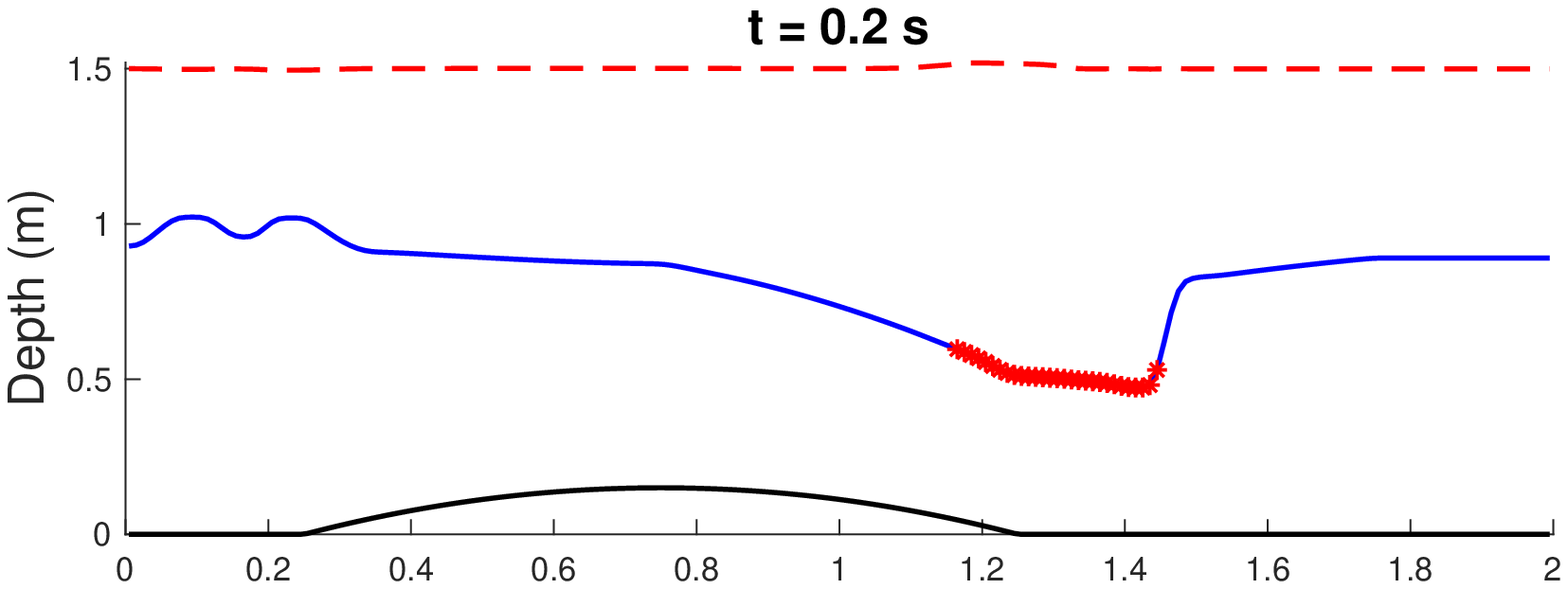}
\includegraphics[width=0.35\textwidth]{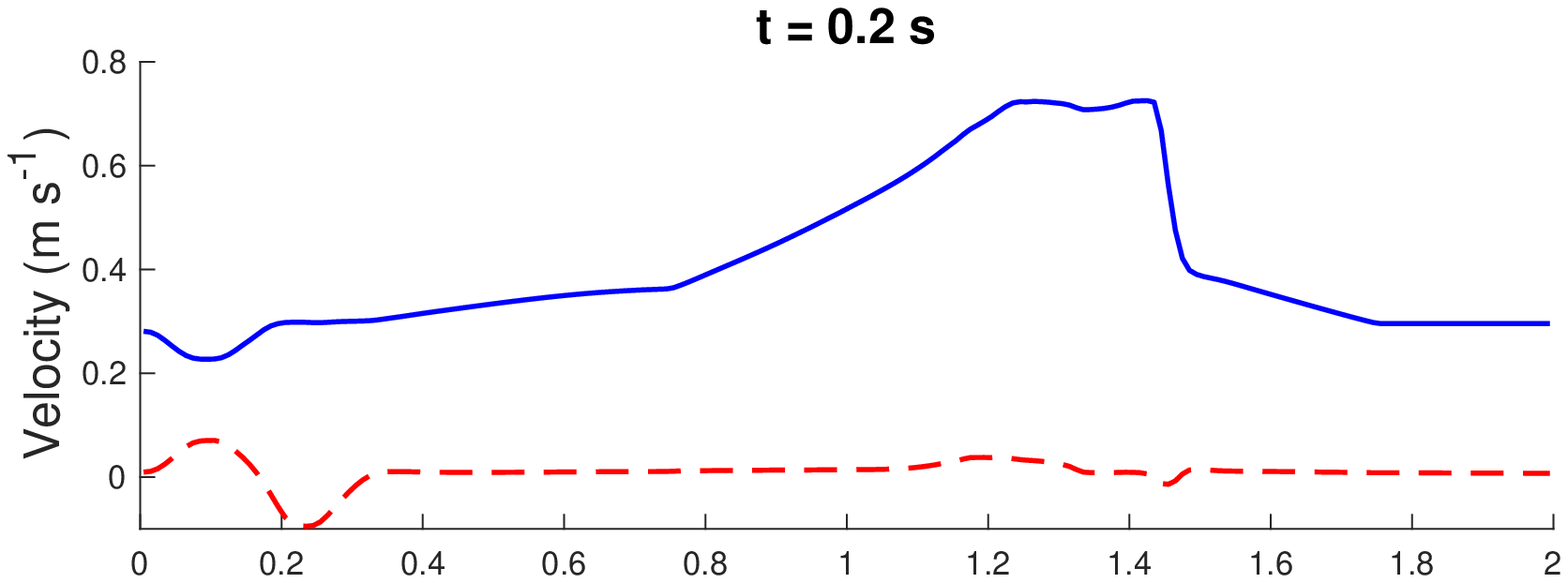}

\includegraphics[width=0.35\textwidth]{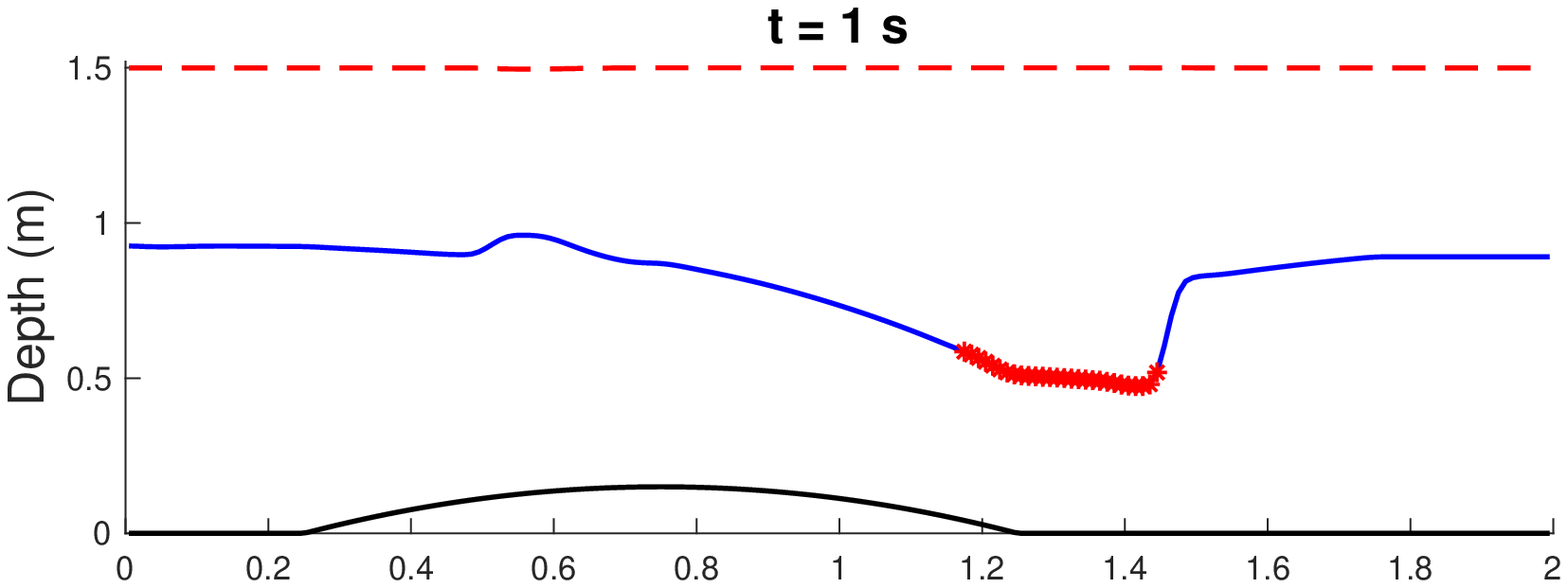}
\includegraphics[width=0.35\textwidth]{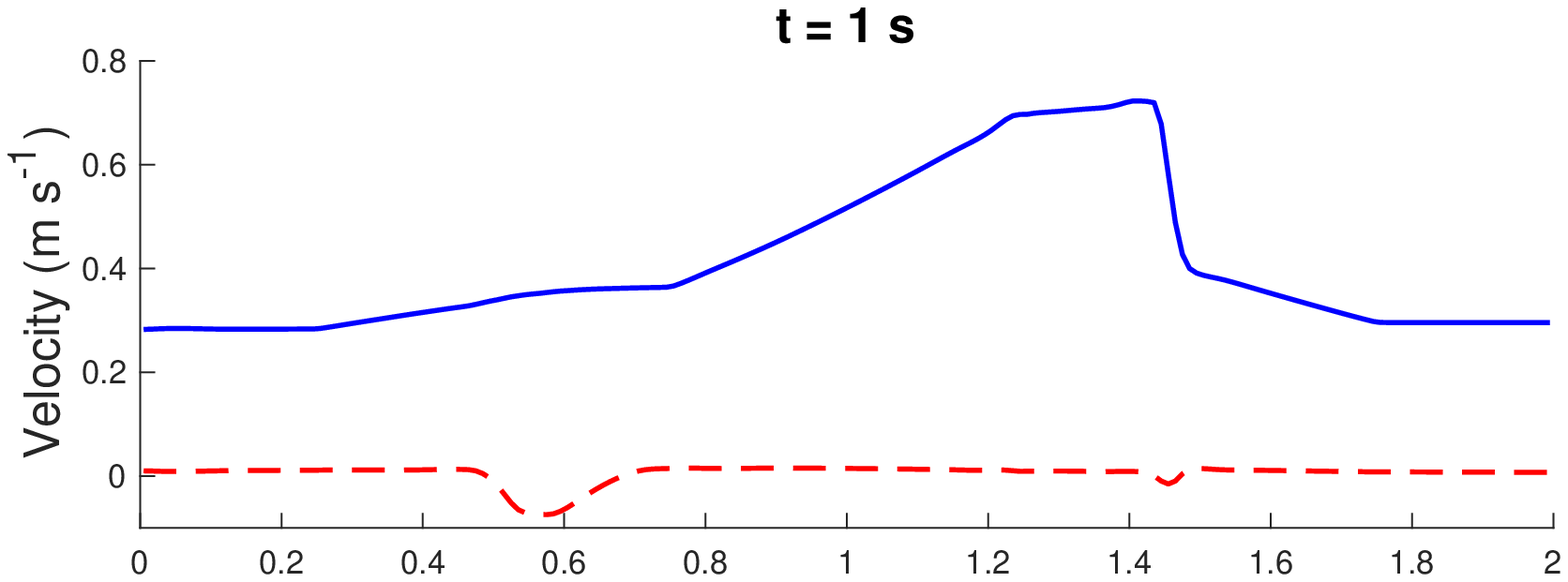}

\includegraphics[width=0.35\textwidth]{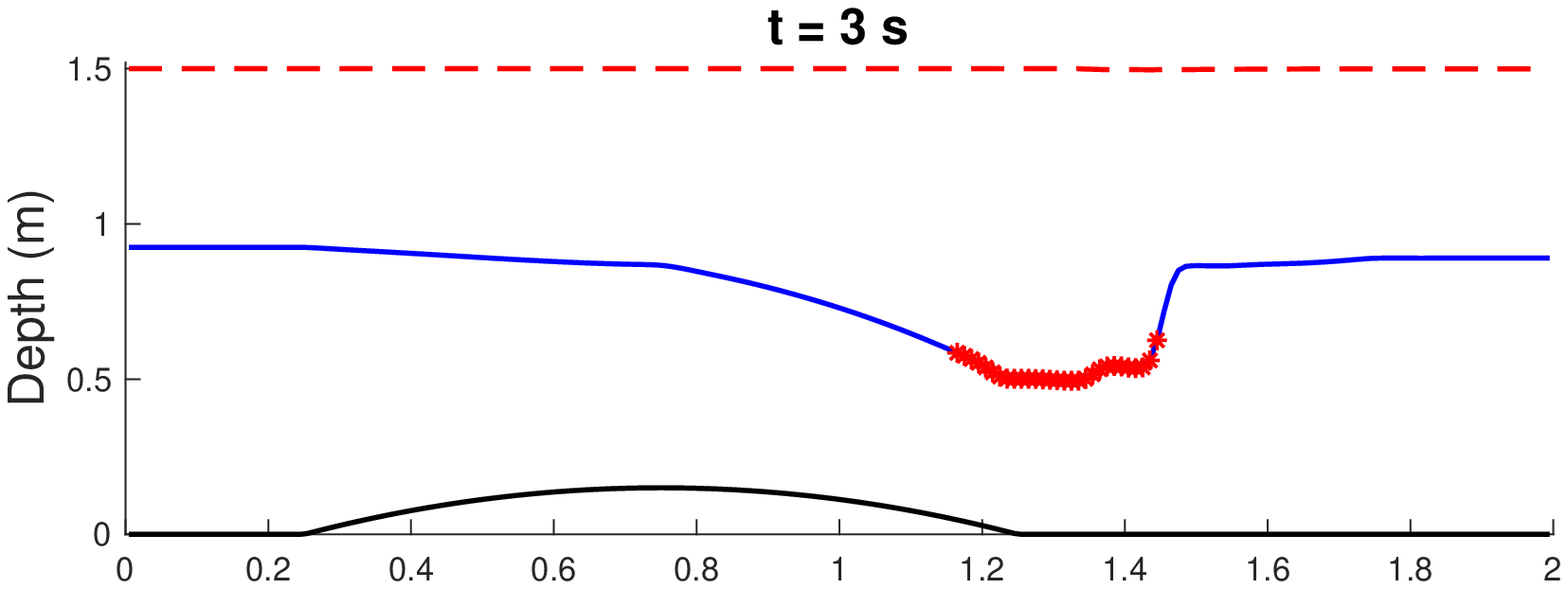}
\includegraphics[width=0.35\textwidth]{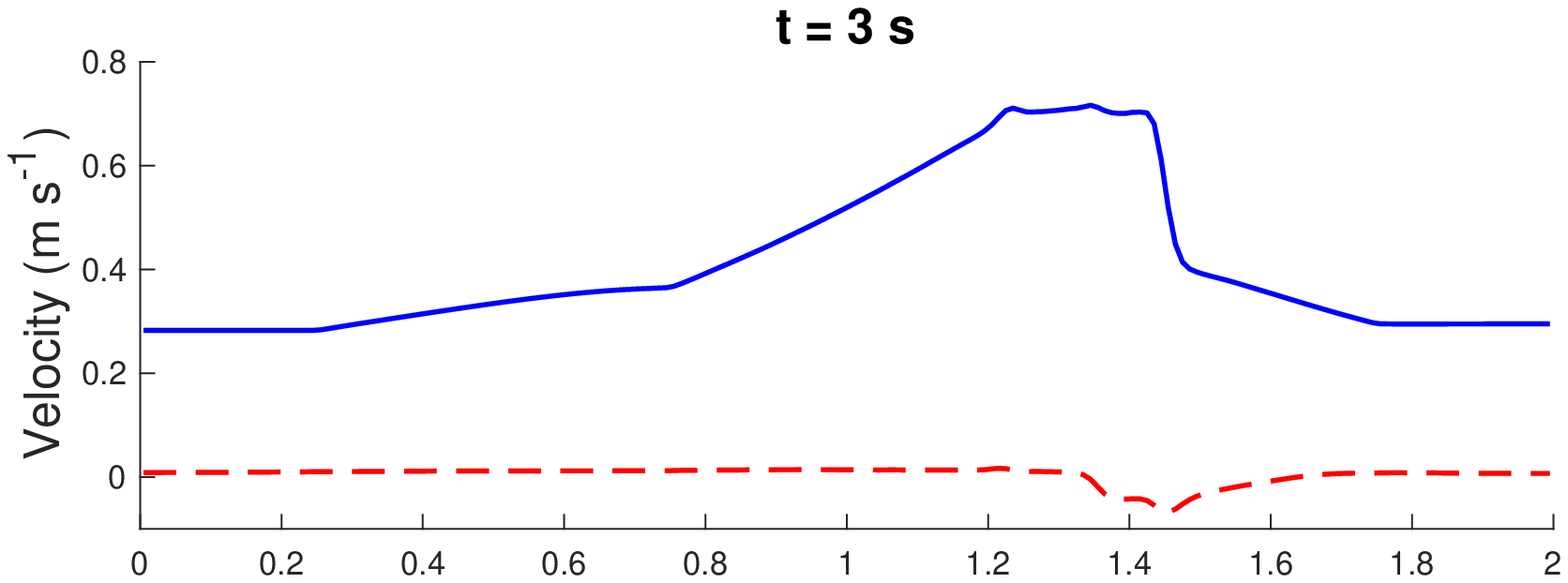}

\includegraphics[width=0.35\textwidth]{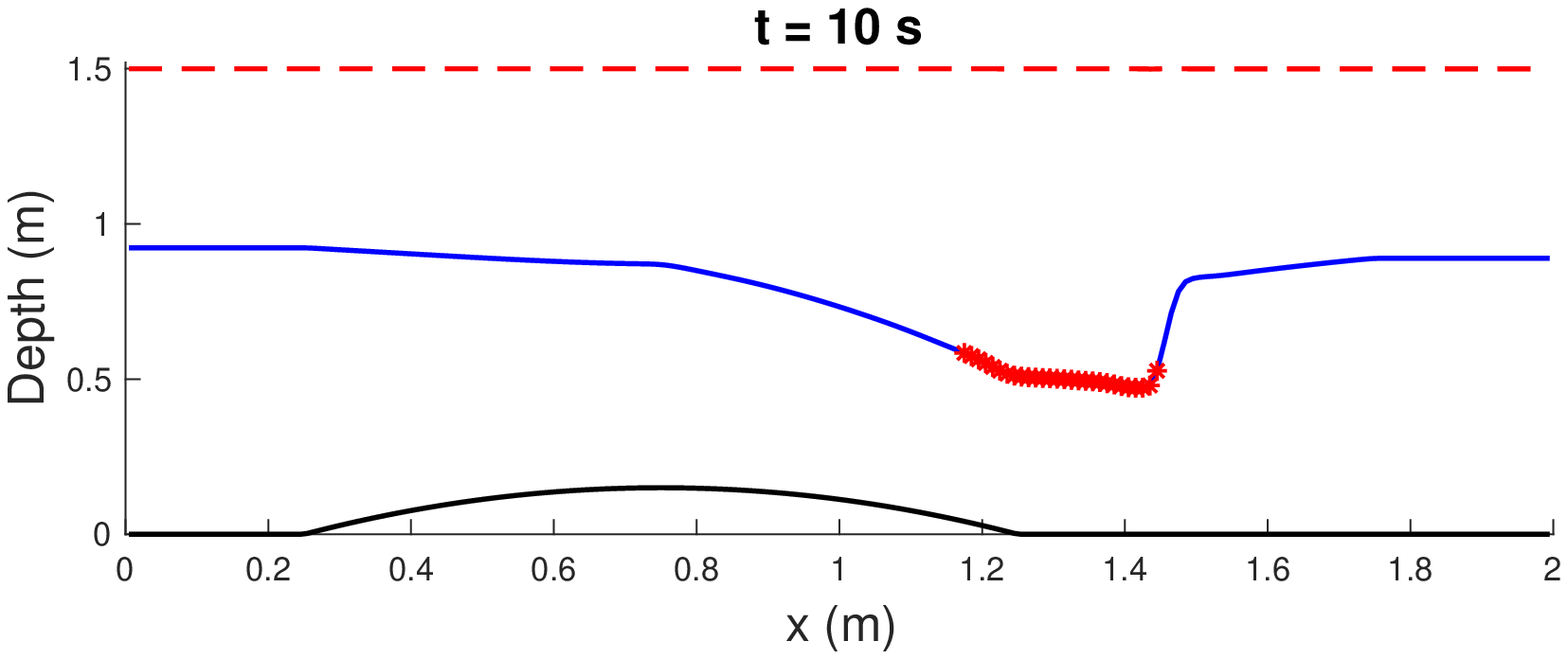}
\includegraphics[width=0.35\textwidth]{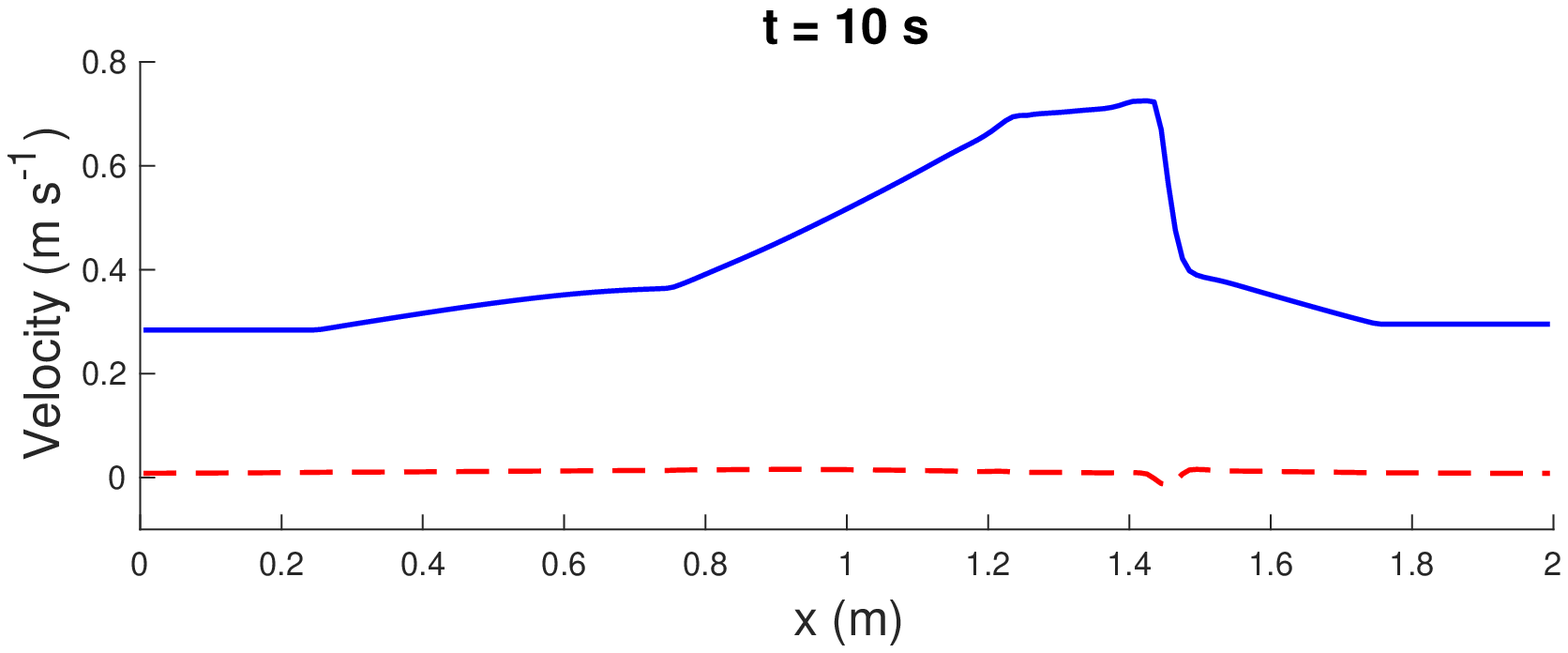}}
\caption{\label{fig:InternalWavePert} Perturbation evolution of internal wave. A perturbation of $\delta_w = 0.2$ is added to the bottom depth $w_1$ in the interval $[0.1,0.2]$. From top to bottom: the time evolution at times $t=0,0.2,1,3,10$ is shown for $w_1$ and $w_2$ in the left column, and $u_1$ and $u_2$ in the right column respectively. } 
\end{figure}

We further test the numerical scheme by adding a perturbation of $\delta_w = 0.2$ in the internal layer in the interval $[0.1,0.2]$. The time evolution of this perturbation at times $t=0,0.2,1,3,10$ is shown in Figure \ref{fig:InternalWavePert} from top to bottom for $w_1,w_2$ (left column) and $u_1,u_2$ (right column). We observe that the perturbation in the internal layer propagates throughout the domain and eventually leaves it, recovering its initial profile. On the other hand, the external layer $w_2$ is always almost flat. The formulation in equation \eqref{eq:swW2Hat} helps maintaining the balance in this internal wave flow and computing the free surface correctly. 

%

\subsection{Lock Exchange}
\label{sec:LockExchange}

\begin{figure}[h!]
\center{
\includegraphics[width=0.35\textwidth]{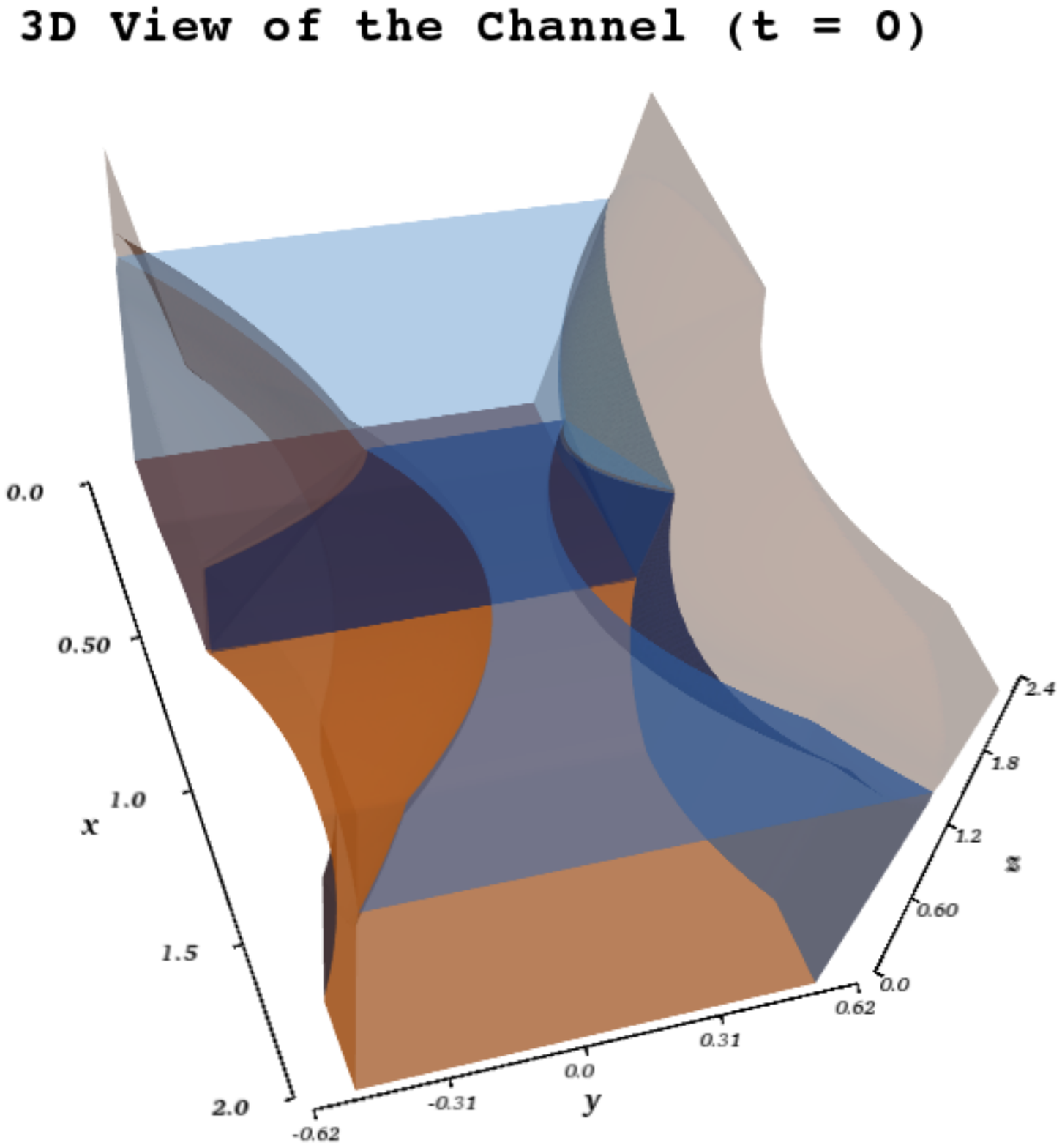} \ \ 
\includegraphics[width=0.49\textwidth]{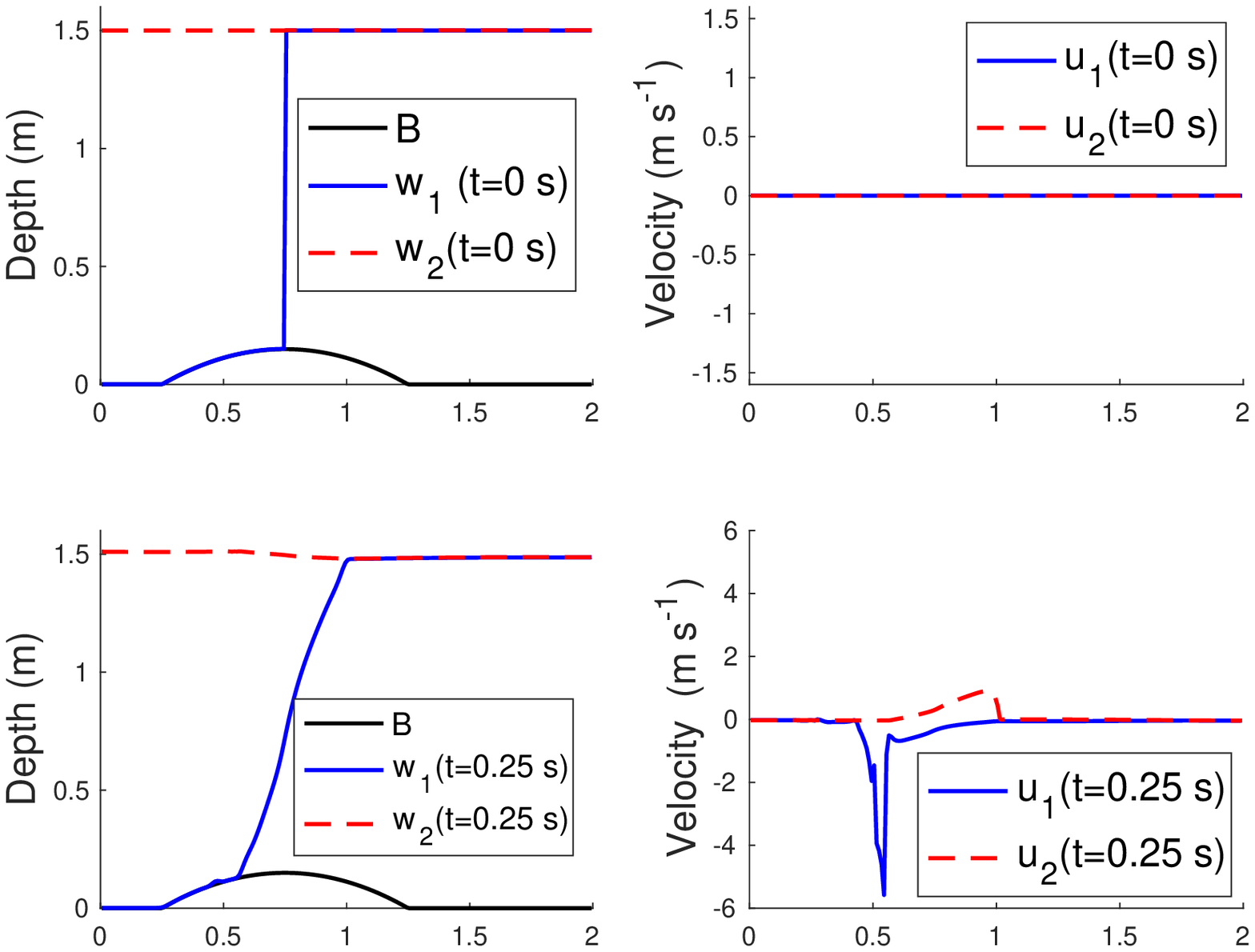} \\[0.07in]
\includegraphics[width=0.35\textwidth]{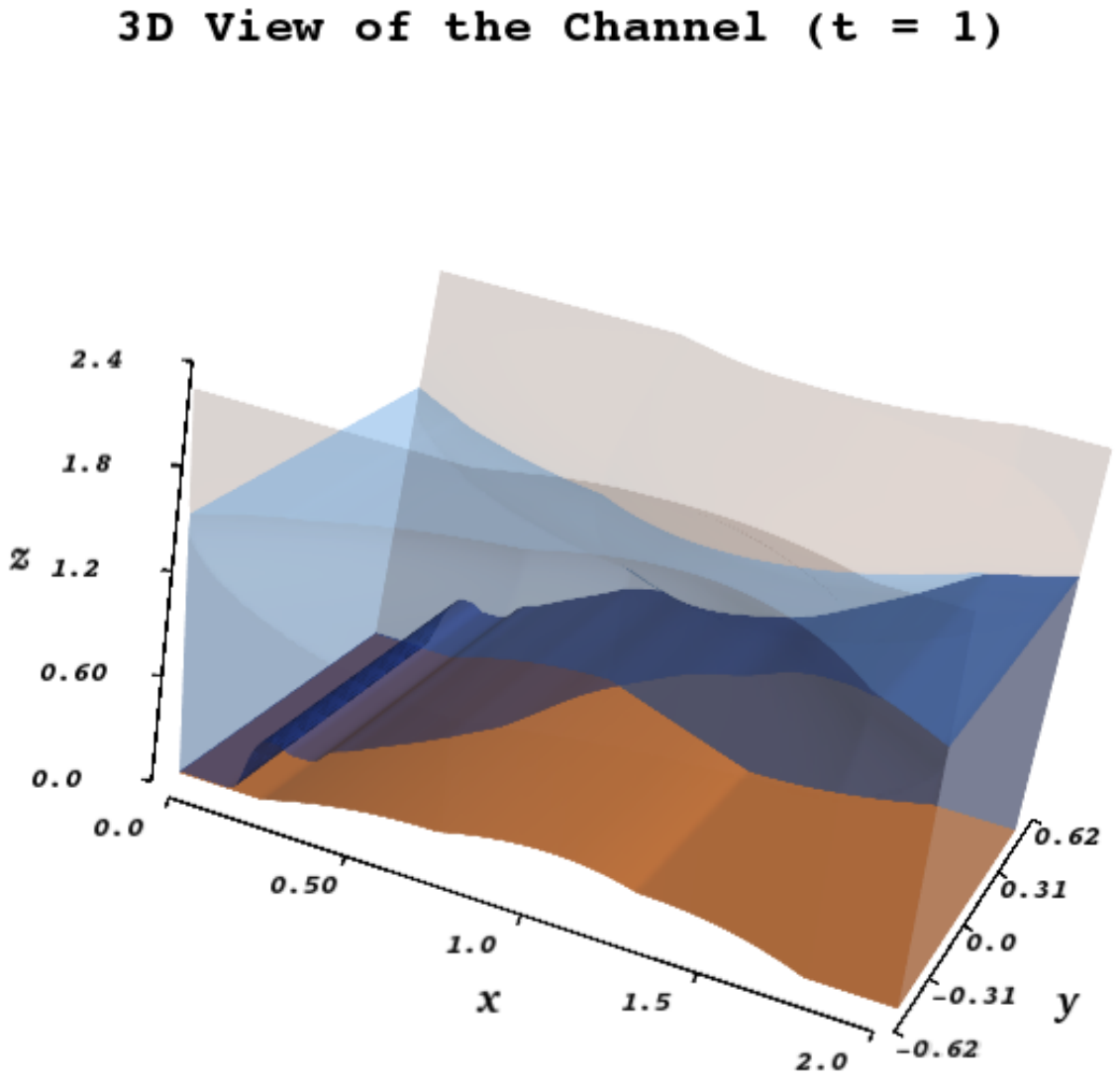} \ \ 
\includegraphics[width=0.49\textwidth]{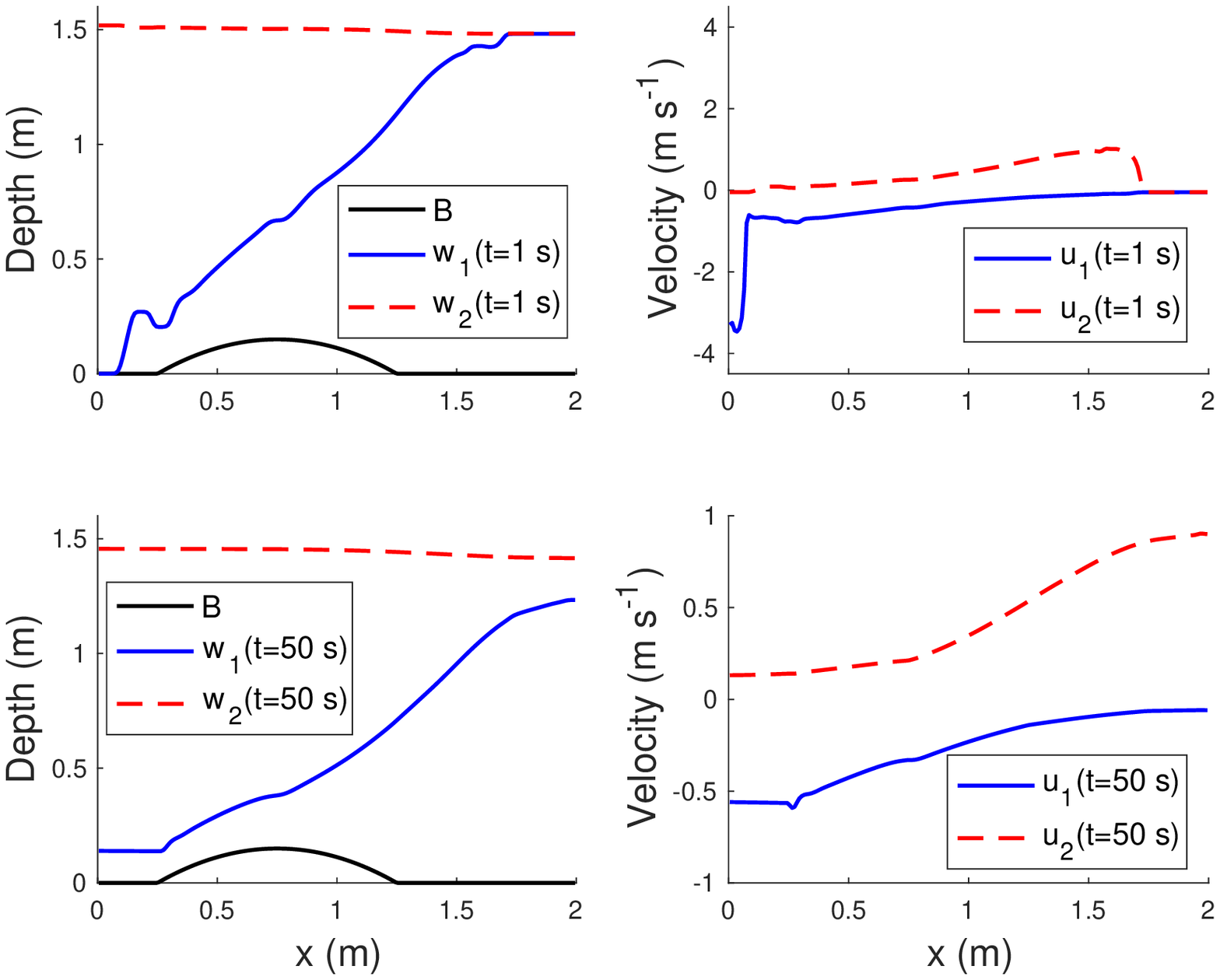}}
\caption{\label{fig:Positivity} Positivity numerical test with initial conditions given by \eqref{eq:InitCondPositivity}, topography given by \eqref{eq:BPositivity} and wall's width by \eqref{eq:sigmaPositivity}. Left column: 3D plot at times $t=0$ and $t=1$. Middle column: topography (black solid line), $w_1$ (blue solid line), and $w_2$ (red dashed line) at times $t=0,0.25,0.5,50$. Right column: the same as in the middle column for $u_1$ (blue solid line) and $u_2$ (red dashed line).} 
\end{figure}

In this example we test the positivity preserving property of the numerical scheme presented in Section \ref{sec:scheme}. The topography and wall's width are given by equations \eqref{eq:BPositivity} and \eqref{eq:sigmaPositivity} from the previous numerical test. The Manning coefficients for the interface and bottom are $n_i = n_b = 0.009 \text{ s m}^{-1/3} $, and $V_e=0$. 

The two-layers are initially at rest, $u_1(x,0) = u_2(x,0) = 0$, with the external layer occupying the left side of the channel and the internal layer the right side. That is,
\begin{equation}
\label{eq:InitCondPositivity}
w_1(x,0) = \left\{ \begin{array}{lcl} B(x)+\delta_B & \text{ if } & x \le 0.75,\\[0.07in]
1.5 & \ & \text{otherwise,} \end{array} \right. 
\ \ \ \ \ \text{ and } \ \ \ \ \ 
w_2(x,0) = \left\{ \begin{array}{lcl} 1.5  & \text{ if } & x \le 0.75,\\[0.07in]
1.5 + \delta_B & \ & \text{otherwise.} \end{array} \right.
\end{equation}
Here $\delta_B = 10^{-3}$ is used as a threshold. We note that this threshold is used to avoid loss of hyperbolicity. The ratio of densities is $r=0.95$. The boundary conditions used here are those specified at the beginning of Section \ref{sec:Results}. We expect the heavier fluid to push the external layer, resulting in a displacement of the internal layer below the external one. The time evolution at times $t = 0, 0.25, 1, 50$ are displayed in Figure \ref{fig:Positivity}. The elevations are shown in the middle column. As one can observe, preserving positivity is challenging and such property in the numerical scheme adds stability and accuracy to the approximated numerical solution. At $t=50$, the flow has reached a steady state that is not at rest. The velocity in the right column indicates that the external layer moves to the right while the internal one goes in the opposite direction. The 3D view on the left column exhibits the complex geometry of the channel's wall and the fluid's evolution over time.

\subsection{Currents}
\label{sec:CurrentsEntrainment}

\begin{figure}[h!]
\center{
\includegraphics[width=0.42\textwidth]{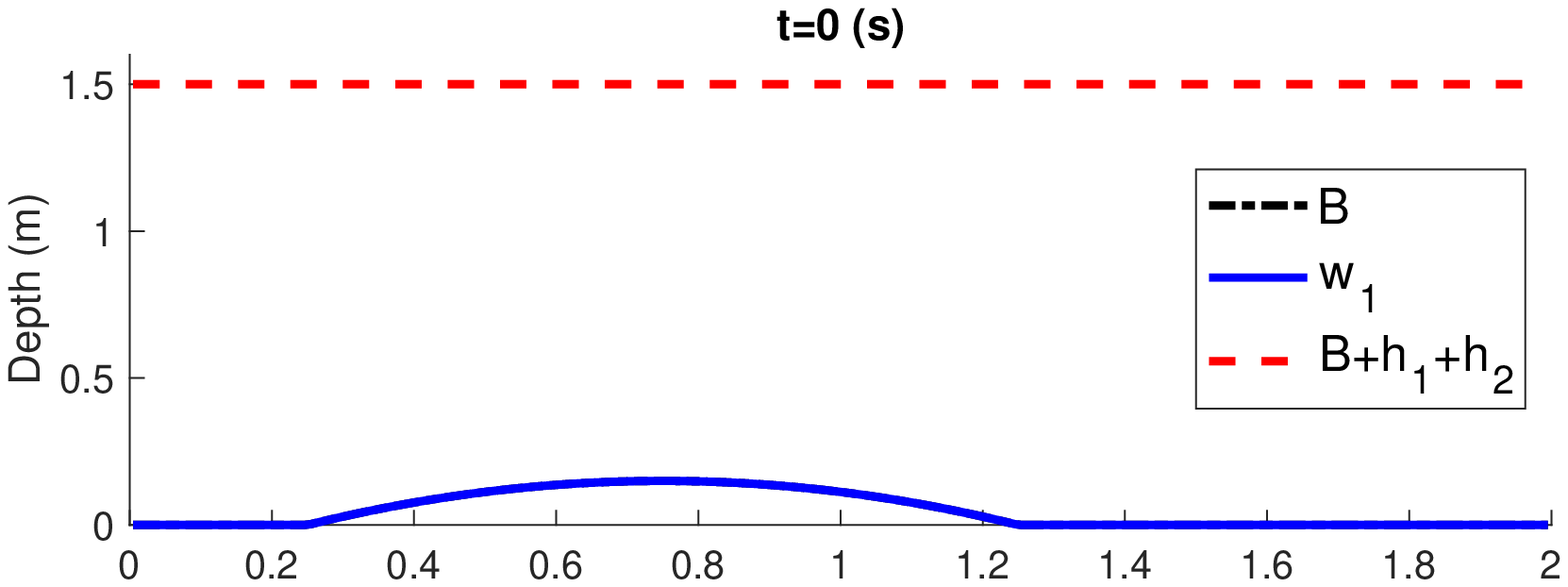} 
\includegraphics[width=0.4\textwidth]{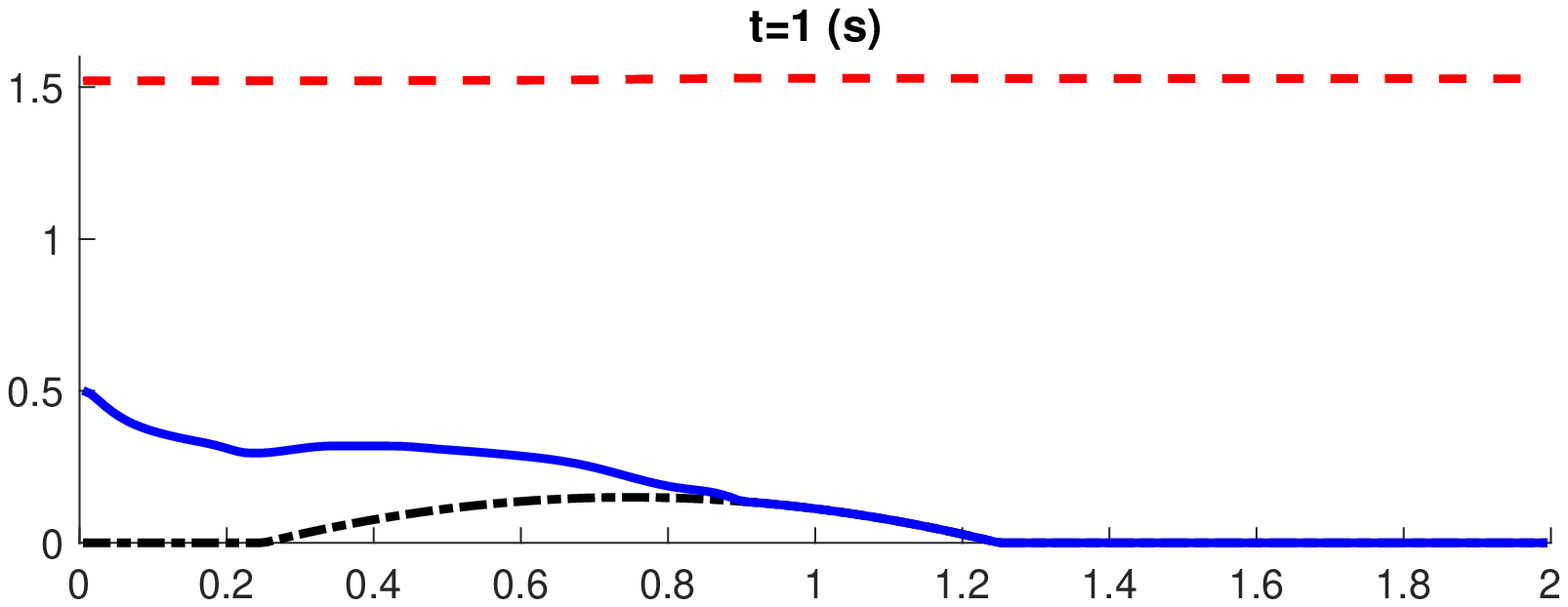} 
\includegraphics[width=0.4\textwidth]{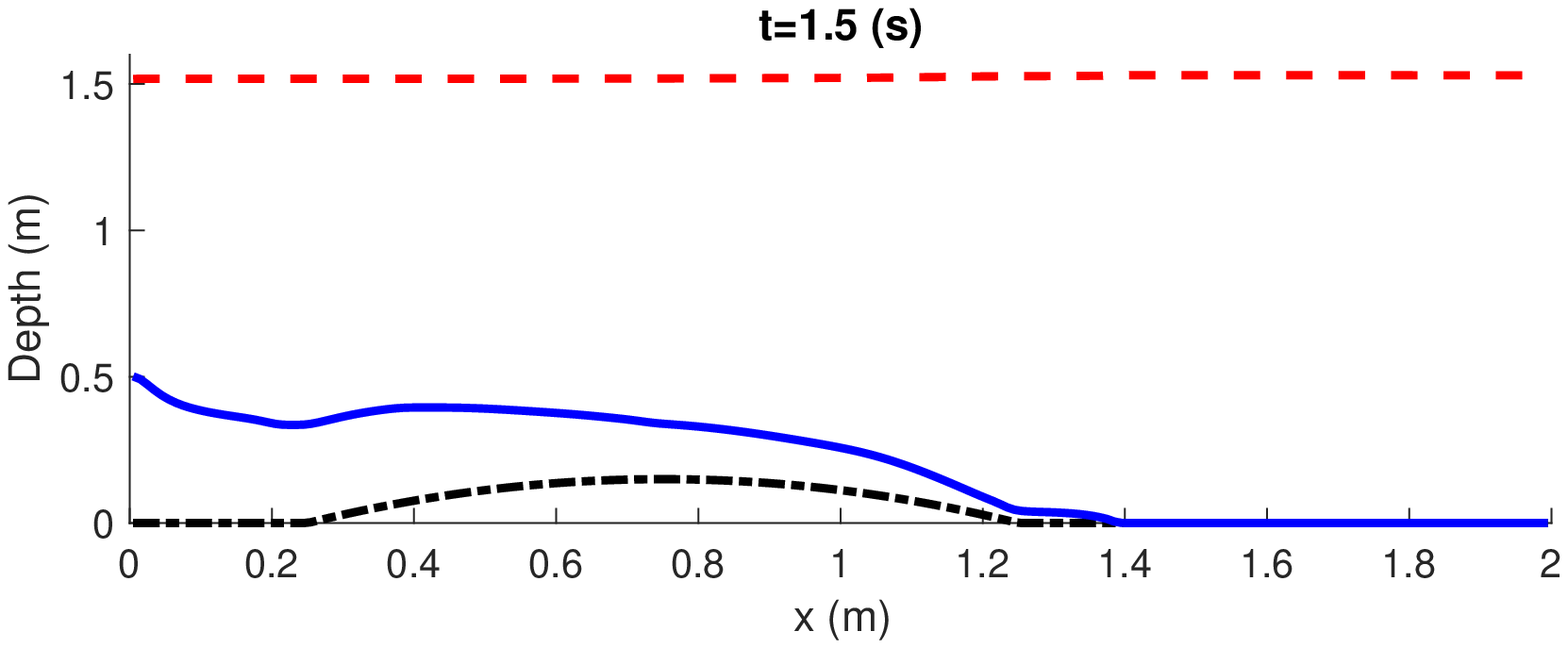} \ \ \ 
\includegraphics[width=0.4\textwidth]{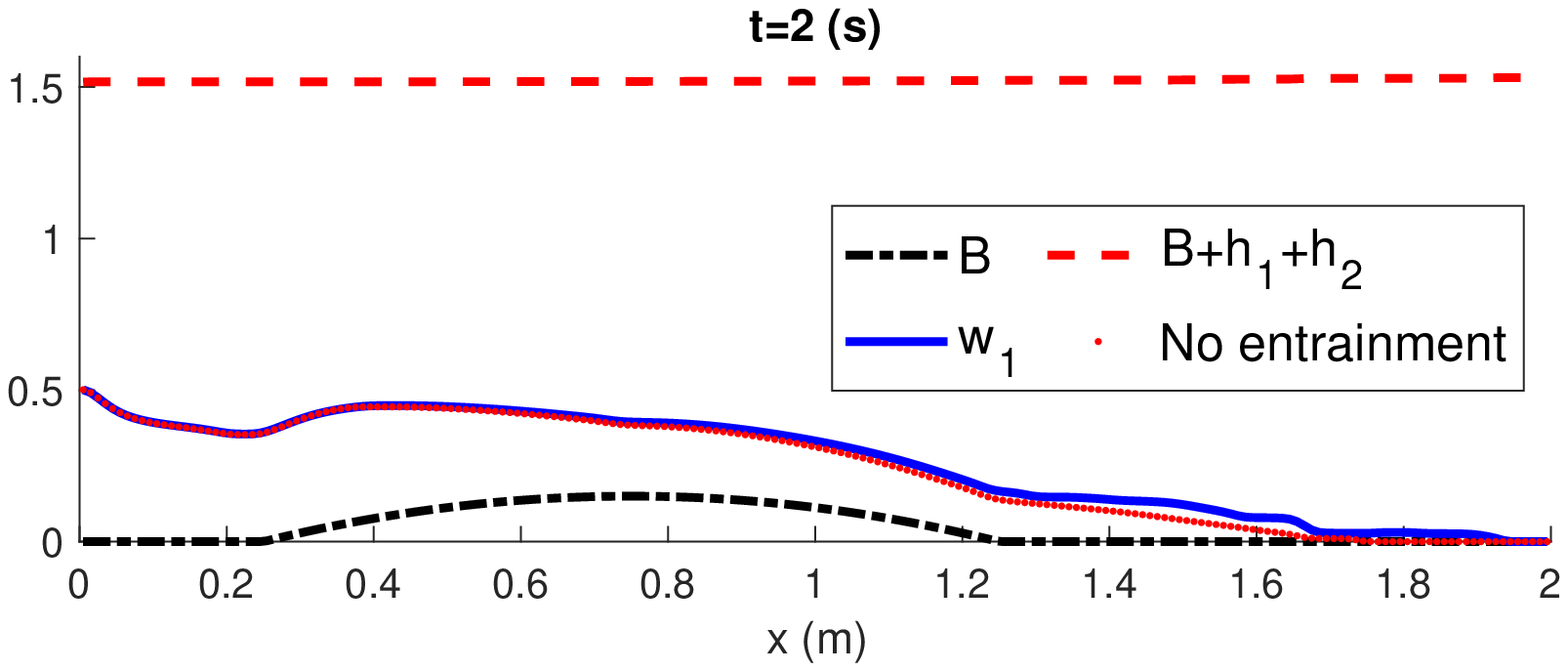} }
\caption{\label{fig:Currents} Time evolution of the solution with initial conditions given by equation \eqref{eq:Currents}. The topography, $w_1$ and $w_2$ are shown at times $t=0,1,1.5,2$. The initial conditions are specified in equation \eqref{eq:Currents}.} 
\end{figure}

Gravity currents produced by lock exchanges have been studied in \cite{adduce2012gravity}. Experimental data and numerical results are compared in \cite{adduce2012gravity} using a shallow-water model with entrainment. It was shown that entrainment helps getting more realistic numerical results. In the numerical test considered in this section, we initiate with a fluid that is composed only of the external layer (lighter fluid). We then impose a discharge of heavier fluid in the internal layer. Specifically, we impose a discharge $Q_{1,\text{left}} = 0.1$, and $w_{1,\text{left}}=0.6, Q_{2,\text{left}}=0, w_{2,\text{left}}=1.5$ at the left boundary. The right boundary is treated as specified at the beginning of Section \ref{sec:Results}. 

The topography $B(x)$ and the geometry $\sigma(x,z)$ are the same from the previous test, and
\begin{equation}
\label{eq:Currents}
 h_1(x,0) = \delta_B, \ \ \ u_1(x,0) = 0, \ \ \ w_2(x,0) = 1.5, \ \ \ u_2(x,0) = 0.
\end{equation}
Here, $r=0.95$, $g=9.81 \text{ m}^2\text{s}^{-1}$, and $n_i = n_b = 0.009 \text{ s m}^{-1/3} $. Following \cite{adduce2012gravity}, the velocity entrainment is given by
\[
V_e = \frac{k G^2}{G^2+5} u_1,
\]
where $G$ is the composite Froude number in equation \eqref{eq:CompositeFroude}, and $k=0.1$. Furthermore, the Manning coefficients for the interface and bottom are $n_i = n_b = 0.009 \text{ s m}^{-1/3} $ respectively.

One of the challenges in this numerical test is that the internal layer has wet-dry states since the depth $h_1$ is initially small. The discharge on the left produces a wet-dry front propagating to the right, inducing a current in the internal layer. Figure \ref{fig:Currents} shows the topography (black dashed line), $w_1$ (solid blue line) and $w_2$ (red dashed line) at times $t=0,1,1.5,2$. One can clearly identify the wet-dry front in the internal layer. In addition to the above variables, the bottom right panel also includes the elevation of the internal layer computed without entrainment. Consistently, one can observe a higher elevation near the front for the computations implemented with entrainment.


\vskip 20 pt
\noindent
{\bf Acknowledgements:} Research was supported in part by grants UNAM-DGAPA-PAPIIT IN113019 \& Conacyt A1-S-17634 (Investigación realizada gracias a los programas UNAM-DGAPA-PAPIIT IN113019 \& Conacyt A1-S-17634 )

\bibliography{References}   

\end{document}